\theoremstyle{plain}
\newtheorem{theorem}{Theorem}[section]
\newtheorem{proposition}[theorem]{Proposition}
\newtheorem{lemma}[theorem]{Lemma}
\newtheorem{corollary}[theorem]{Corollary}
\theoremstyle{definition}
\newtheorem{definition}[theorem]{Definition}
\newtheorem{example}[theorem]{Example}
\newtheorem{remark}[theorem]{Remark}
\newcommand{\supp}{\operatorname{supp}}
\newcommand{\col}{\operatorname{col}}
\newcommand{\genus}{\operatorname{genus}}
\newcommand{\sgn}{\operatorname{sgn}}
\newcommand{\norm}[1]{\left\lVert#1\right\rVert}
\newcommand{\abs}[1]{\left\lvert#1\right\rvert}
\newcommand{\inner}[1]{\left\langle#1\right\rangle}
\newcommand{\Row}{\operatorname{Row}}
\newcommand{\codim}{\operatorname{codim}}
\newcommand{\rk}{\operatorname{rk}}
\newcommand{\calF}{\mathcal{F}}
\newcommand{\calI}{\mathcal{I}}
\newcommand{\calM}{\mathcal{M}}
\newcommand{\R}{\mathbb{R}}
\newcommand{\calC}{\mathcal{C}}
\newcommand{\Z}{\mathbb{Z}}
\begin{document}
\title{Lattice of Integer Flows and the Poset of Strongly Connected Orientations for Regular Matroids}
\author{Zsuzsanna Dancso} 
\address{University of Sydney}
\email{\href{mailto:zsuzsanna.dancso@sydney.edu.au}{zsuzsanna.dancso@sydney.edu.au}}
\urladdr{\url{http://zsuzsannadancso.net}}

\author{Jongmin Lim}
\email{\href{mailto:jongjum18@naver.com}{jongjum18@naver.com}}

\keywords{
    regular matroid, oriented matroid, integer flows, integer cuts, totally cyclic orientations, Voronoi cell}
\def\subjclassname{\textup{2020} Mathematics Subject Classification}
\expandafter\let\csname subjclassname@1991\endcsname=\subjclassname
\expandafter\let\csname subjclassname@2000\endcsname=\subjclassname
\subjclass{
    05C21, 05C50
    \hfill
    Date: \today
}

\maketitle

\begin{abstract} 
A 2010 result of Amini provides a way to extract information about the structure of the graph from the geometry of the Voronoi polytope of the lattice of integer flows (which determines the graph up to two-isomorphism). Specifically, Amini shows that the face poset of the Voronoi polytope is isomorphic to the poset of {\em strongly connected orientations} of subgraphs. This answers a question raised by Caporaso and Viviani, and Amini also proves a dual result for integer cuts. In this paper we generalise Amini's result to regular matroids; in this context the theorem for integer cuts becomes a direct consequence of the theorem for integer flows, by making duality explicit as matroid duality.
\end{abstract}

\tableofcontents

\section{Introduction}
Given a finite, connected graph $G$ (multiple edges allowed), the {lattices of integer flows and cuts} are algebraic graph invariants associated to $G$. Here, a {\em lattice} means a finitely generated free abelian group with a symmetric, non-degenerate $\Z$-valued inner product. 
In their seminal paper \cite{BHN} Bacher, de la Harpe, and Nagnibeda proved that these lattices are {\em two-isomorphism} invariants, that is, they are determined by the matroid isomorphism class of the graphical matroid -- or cycle matroid -- associated to $G$. They remarked that they were unable to find a pair of non-isomorphic 3-connected graphs with isomorphic lattices of integer flows. 

This lead to the question: exactly how much information do these lattices retain about the two-isomorphism class of a graph? Lattices of integer flows and cuts extend naturally to the context of regular matroids, where the corresponding question is whether they determine the isomorphism class of regular matroids. The same question also arises in tropical geometry, where it is known as the tropical Torelli problem.

In 2010, these questions were independently settled by Caporaso--Viviani \cite{CV} for graphs and tropical curves, and and Su--Wagner \cite{Su-Wagner} for regular matroids. In fact, the graph case also follows from earlier equivalent results of Watkins on graph Laplacians \cite{W1, W2}. After their geometric analogues, these results are also collectively referred to as Discrete Torelli theorems.

Recall that the class of regular matroids contains graphical matroids, and is closed under matroid duality. The main theorem of \cite{Su-Wagner} states that the lattice of integer flows determines the isomorphism class of a regular matroid up to co-loops. For graphs, this means that the lattice of integer flows uniquely determines the two-isomorphism class of {\em two-edge-connected} graphs. A dual result states that the lattice of integer cuts determines the isomorphism class of a regular matroid up to loops. Dually, the lattice of integer cuts determines the two-isomorphism class of graphs with no loop edges.

Amini's Theorem \cite{Amini} -- which settles Conjecture 5.2.8 of \cite{CV} -- extracts information encoded in the lattice of integer flows from the geometry of the {\em Voronoi cell} of the lattice. Namely, Amini shows that the face poset of the Voronoi cell is isomorphic to the poset of {\em strongly connected orientations} of subgraphs of the graph.
Amini also proves a dual theorem for the lattice of integer cuts in \cite{Amini}. This dual theorem later appeared in \cite{AE20} in the context of toric geometry, as it is crucial in the study of the degeneration problem for linear series on curves \cite{AE20-2,AE21}. Note that the framework in \cite{AE20} is more general: the result required for the geometric applications is a mixed setting where the tiling is given by a collection of Voroni polytopes associated to some subgraphs of $G$, which are determined by the arithmetic of the integer edge lengths, and the divisor. In this paper we build on the original setting of \cite{Amini} with particular focus on the duality between the flow and cut theorems.

For a planar graph $G$, the lattice of integer cuts of $G$ coincides with the lattice of integer flows of the planar dual $G^*$. This statement does not extend to non-planar graphs, as there is no notion of duality. However, the lattices of integer cuts and flows are invariants of the {\em graphical matroid} associated to a graph, and graphical matroids embed in the larger class of {\em regular matroids}, which is closed under matroid duality. The matroid dual of a graphical matroid is called a {\em co-graph}, and the intersection of graphical matroids and co-graphs are exactly the graphical matroids of planar graphs. In this more general context, the lattice of integer cuts of a regular matroid is isomorphic to the lattice of integer flows of the dual matroid. 

In this note, we generalise Amini's theorem to regular matroids: the proof transfers to the new context in a fairly natural way with appropriate language in place.
This generalisation eliminates the need to supply a separate proofs for the two dual theorems. We show how the theorem for integer cuts -- relating the geometry of the Voronoi cell of the lattice of {\em integer cuts} to {\em cut subgraphs} with {\em coherent acyclic} orientations -- follows directly, via matroid duality, from the theorem for integer flows. We note that the generalisation to regular matroids does not rely on Seymour's decomposition theorem but rather uses elementary properties of totally unimodular matrices.

\subsection*{Acknowledgements} We thank Omid Amini for helpful comments, and the anonymous referee for their careful report and editing suggestions. This work was carried out as part of the University of Sydney's Talented Student Program as an undergraduate project by JL, supervised by ZD.  ZD was partially supported by an ARC DECRA DE170101128 award.

\section{Amini's Theorem}

We begin with a brief outline of Amini's Theorem \cite{Amini}  for graphs, recalling basic definitions and aiming the presentation towards the generalisation to regular matroids. 

Let $G(V, E)$ be an undirected graph with possible multiple edges and loops. Choose an arbitrary orientation for $G$, that is, a direction for each edge. We denote a directed  edge $e$ beginning at $u \in V(G)$ and ending at $v\in V(G)$ by $e=(u,v)$. Let $M$ be the signed incidence matrix of this directed graph: the columns of $M$ are indexed by edges, the rows are indexed by vertices, and for a vertex $v \in V(G)$ and edge $e \in E(G)$, the entry $M_{ve}$ is given by
\begin{align*}
M_{ve} = \begin{cases}
-1 & \text{if $e$ points away from $v$,}\\
+1 & \text{if $e$ points towards $v$,}\\
\phantom{-}0 & \text{if $e$ is a loop, or $e$ is not incident to $v$.}
\end{cases}
\end{align*}
Let $\mathbb{R}^E$ and $\mathbb{R}^V$ be real vector spaces with standard bases indexed by $E=E(G)$ and $V=V(G)$ respectively. The matrix $M$ represents a linear transformation $M:\mathbb{R}^E \to \mathbb{R}^V$ such that for a directed edge $e=(u,v)$, $M(e) = v-u$. The {\em vector space of real valued flows} $\calF(G)$, also known as the {\em cycle space} of $G$, is defined as $\mathcal{F}(G):=\ker(M)$. The Euclidean inner product $\inner{ -, - }:\mathbb{R}^E \times \mathbb{R}^E \to \mathbb{R}$ and the corresponding norm $\norm{\cdot}: \mathbb{R}^E \to \mathbb{R}_{\geq 0}$ are defined by:
\begin{align*}
\inner{ e_1,  e_2} &= \begin{cases}
1 &  e_1 =  e_2\\
0 &  e_1 \neq  e_2
\end{cases} & & \text{for any }  e_1,  e_2\in E(G),\\
\norm{{x}}^2 &= \langle  x,  x\rangle & & \text{for any } x\in \mathbb{R}^E.
\end{align*}
An {\em integer lattice}, in general, is a finitely generated free abelian group with a symmetric non-degenerate $\mathbb{Z}$-valued bilinear form. The {\em lattice of integer flows} $\Lambda(G)$ of the graph $G$ is the integer lattice given by $\Lambda(G):= \ker(M) \cap \mathbb{Z}^E$. The spaces $\mathcal{F}(G)$ and $\Lambda(G)$ inherit the bilinear and quadratic forms of $\mathbb{R}^E$, this equips $\Lambda(G)$ with a lattice structure. The {\em genus of the graph} $G$ is $\genus(G) := \dim(\mathcal{F}(G))= \rk (\Lambda(G))$.\\
The isomorphism (isometry) classes of $\mathcal{F}(G)$ and $\Lambda(G)$ as inner product space and integral lattice, respectively, are independent of the orientation chosen on $G$: reversing the orientation of an edge $e$ merely reflects the cycle space with respect to the coordinate hyperplane orthogonal to the $e$-axis.

\begin{definition}
Let $ \lambda \in \Lambda(G)$. The Voronoi cell of $ \lambda$ is $V_\lambda := \{  x \in \mathcal{F}(G) \ | \ \norm{ x -  \lambda} \leq \norm{ x -  \mu},  \forall  \mu \in \Lambda(G)\}$.
\end{definition}

In other words, the Voronoi cell of a lattice point $\lambda$ is the set of points in the space $\mathcal{F}(G)$ that are nearest to $\lambda$ of all the lattice points in $\Lambda(G)$. The Voronoi cells are polyhedra which tessellate $\mathcal{F}(G)$, and they are invariant under translation by lattice vectors, as the lattice itself is invariant under such translations. Thus, we only consider the Voronoi cell of the origin, $V_0$. 

\begin{definition}
Let $\mathcal{FP}(G)$ denote the {\em poset of faces} of $V_0$ ordered by inclusion. The face poset is {\em graded} by dimension: that is, same-dimensional faces are never comparable, and when two faces are comparable, the higher dimensional one contains the lower dimensional one.
\end{definition}

\begin{definition}
An orientation of a graph $G$ is {\em strongly connected} if for every directed edge $e=(u,v)$, there exist a directed path from $v$ to $u$. 
\end{definition}

Note that a graph $G$ with a strongly connected orientation need not be connected. However, if there exists a strongly connected orientation for $G$, then the connected components of $G$ must be {\em 2-edge-connected}, that is, remain connected upon removal of any single edge. 

\begin{definition} \cite{Amini} The {\em poset of strongly connected orientations}, denoted
$\mathcal{SC}(G)$, consists of all ordered pairs of the form $(H, D_H)$ where $H$ is a (not necessarily connected) subgraph of $G$, and $D_H$ is a strongly connected orientation on $H$. A subgraph here means a subset of the edges on the full set of vertices. 
The partial ordering on $\mathcal{SC}(G)$ is defined as follows: $(H, D_H) \leq (H', D_{H'})$ if and only if $H'$ is a subgraph of $H$, and $D_{H'}$ is the induced orientation on $H'$ by $D_H$. The maximal element of this poset, by definition, is the empty subgraph with the empty orientation $(\emptyset, \emptyset)$.
\end{definition}

The following fact is stated in \cite{Amini}:
\begin{proposition}\label{prop:PosetGraded} 
The poset $\mathcal{SC}(G)$ is graded: the degree of $(H,D_H)$ is defined as $\operatorname{genus}(G)-\operatorname{genus}(H)$, were the genus of a disconnected graph is understood to be the sum of the genera of its connected components.
\end{proposition}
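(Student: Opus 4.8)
The plan is to exhibit $\rho(H,D_H):=\genus(G)-\genus(H)$ as a rank function on $\mathcal{SC}(G)$, i.e.\ to show $\rho$ increases by exactly $1$ along every covering relation; since $\mathcal{SC}(G)$ is finite and has the maximum element $(\emptyset,\emptyset)$ (hence its Hasse diagram is connected), this establishes that $\mathcal{SC}(G)$ is graded with degree function $\rho$. Because the order declares $(H,D_H)\le(H',D_{H'})$ exactly when $H'$ is a subgraph of $H$ with the induced orientation, $\rho$ is automatically weakly order-preserving, as a subgraph cannot have larger genus. So the statement reduces to proving, for a \emph{strict} relation $(H,D_H)<(H',D_{H'})$ — where $E(H')\subsetneq E(H)$ and $D_{H'}=D_H|_{E(H')}$ — two facts: (A) $\genus(H)>\genus(H')$; and (B) if $\genus(H)-\genus(H')\ge2$ then the relation has an intermediate element, and so is not a covering relation. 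Granting (A) and (B), every covering relation satisfies $\genus(H)-\genus(H')=1$, so $\rho$ jumps by $1$ along it.

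The key device for both parts is the contracted graph $\overline H:=H/H'$, obtained by contracting each connected component of $H'$ to a point. Its edge set is $E(H)\setminus E(H')$, it has the same number of connected components as $H$, and the identity $\genus(K)=|E(K)|-|V(K)|+(\text{number of components of }K)$ gives at once $\genus(\overline H)=\genus(H)-\genus(H')$. For (A): if $\genus(\overline H)=0$, then $\overline H$ is a forest with at least one edge, hence has a bridge $\overline e$, and the edge $e\in E(H)\setminus E(H')$ lying over it is then a bridge of $H$ (inside the components of $H'$ everything stays connected) — contradicting the fact, recalled in the text, that the components of a graph admitting a strongly connected orientation are $2$-edge-connected. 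Hence $\genus(\overline H)\ge1$, which is (A).

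For (B), give $\overline H$ the orientation $\overline D$ induced by $D_H$; contracting a strongly connected orientation along connected vertex sets preserves strong connectivity, so $\overline D$ is strongly connected, and $\genus(\overline H)\ge2$; assume for now that $\overline H$ is connected. The crucial point is that one should not look for a single removable edge — there may be none, e.g.\ for two directed cycles glued along a vertex — but instead delete the edge set $T$ of the \emph{last ear of a directed ear decomposition} of $\overline D$: every strongly connected digraph is built from a directed cycle by successively attaching directed ears (directed paths or cycles whose endpoints lie in the current subdigraph and whose interior vertices are new), the number of ears being $\genus(\overline H)-1\ge1$. For this $T$ one has $\emptyset\neq T\subsetneq E(\overline H)$, and $\overline D\setminus T$ (the union of all earlier ears) is still strongly connected. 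Now put $H'':=(V,E(H)\setminus T)$ with the restriction $D_{H''}$ of $D_H$; then $E(H')\subseteq E(H)\setminus T\subsetneq E(H)$, so $(H,D_H)<(H'',D_{H''})<(H',D_{H'})$ provided $D_{H''}$ is strongly connected — and it is, since its contraction over the components of $H'$ is the strongly connected digraph $\overline D\setminus T$, while each component of $H'$ keeps its strongly connected orientation, so any directed walk in the contraction lifts to one in $H''$ by routing through each contracted component via its strong connectivity. If instead $\overline H$ is disconnected, one either applies the above to a component of genus $\ge2$, or, when every component has genus $\le1$, takes $T$ to be the edge set of one of the (at least two) cyclic components; in either case $\emptyset\neq T\subsetneq E(\overline H)$ and $\overline D\setminus T$ is again strongly connected, giving the required intermediate. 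This proves (B).

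The one genuinely delicate point is the construction in (B): in particular, it is essential to use the \emph{directed} ear decomposition of $\overline D$ rather than an ear decomposition of the underlying undirected graph (the restriction of a strongly connected orientation to a $2$-edge-connected subgraph need not be strongly connected), and to note that directed walks lift through the contraction. The remaining ingredients — the identity $\genus(\overline H)=\genus(H)-\genus(H')$, the bridge argument for (A), and the reduction of gradedness to the existence of a rank function — are routine.
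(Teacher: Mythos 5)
Your proof is correct, but it proves strictly more than the proposition requires and does so by a route that forfeits a feature the paper relies on. The paper uses ``graded'' in the weak sense stated in its Definition of $\mathcal{FP}(G)$: a degree function such that same-degree elements are incomparable and the order respects degree. For this, strict monotonicity of $\rho(H,D_H)=\genus(G)-\genus(H)$ along strict inequalities suffices, and that is all the paper proves. Your part~(A) establishes exactly this; your part~(B) (covers increase rank by one) is a genuinely stronger statement that the proposition does not claim. The more significant divergence is in method. The paper's argument for (A) takes $e=(u,v)\in E(H)\setminus E(H')$, uses strong connectivity to produce a directed path $x$ from $v$ to $u$, and observes that $e+x\in\ker M_H$ cannot lie in $\ker M_{H'}$ because $e\notin H'$. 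This is deliberately phrased so as to transfer verbatim to regular matroids (``directed path from $v$ to $u$'' becomes ``$z_e\geq 0$ with $e+z_e\in\Lambda$''), which the paper needs: Remark~\ref{rmk:Graded} asserts that the \emph{same} proof covers the matroid case. Your (A) instead goes through the contraction $\overline H=H/H'$, the Euler formula $\genus(K)=\lvert E(K)\rvert-\lvert V(K)\rvert+c(K)$, and a bridge in a genus-zero $\overline H$; your (B) goes through directed ear decompositions and lifting of directed walks across contracted components. All of these are sound for graphs (and your lifting argument correctly handles the one delicate point in (B)), but they are graph-specific: contraction, the vertex-counting genus formula, and ear decompositions do not translate directly to the matroid setting, so the paper could not have used your proof and then pointed to it in Remark~\ref{rmk:Graded}. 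In short: a correct and more ambitious argument, but a different and less portable one than the paper's.
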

We include a proof, which will apply directly also to regular matroid version of the same claim:
\begin{proof}
Let $H'\subsetneq H$ be subgraphs of $G$ with strongly connected orientations $D_H$ and $D_{H'}$, such that $(H, D_H) \leq (H', D_{H'})$. We need to prove that $\genus(H')<\genus(H)$.

Let $M_H$ and $M_{H'}$ be the signed incidence matrices of $H$ and $H'$ respectively. Since $D_H$ induces $D_{H'}$ on $H'$, we know that $M_{H'}$ is a submatrix of $M_H$, that is, $M_{H'}$ consists of some, but not not all, columns of $M_H$. Thus there is a canonical embedding $\ker M_{H'} \hookrightarrow \ker M_H$, and therefore  $\genus(H')\leq\genus(H)$.

To prove strict inequality, let $e \in E(H) \setminus E(H')$. As $H$ is strongly connected, if $e=(u,v)$, there exists a directed path $x$ from $v$ to $u$. Consider the sum of the edges of $x$, as an element of $\mathbb{R}^E$. By an abuse of notation, denote this element also by $x$. Since $e$ forms a directed cycle with $x$, we have $ e + x \in \ker M_H$. Furthermore, $ e+ x$ is not in the image of $\ker M_{H'} \hookrightarrow \ker M_H$, as $e \not\in H'$. Thus,  $\genus(H') = \dim\ker M_{H'} < \dim\ker M_H = \genus(H)$.
\end{proof}
The following theorem is a main result of \cite{Amini}, which establishes an intricate connection between the structure of the graph and the geometry of the lattice of integer flows:
\begin{theorem}
For a finite graph $G$, $\mathcal{FP}(G) \cong \mathcal{SC}(G)$ as graded posets.
\end{theorem}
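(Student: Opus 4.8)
The plan is to realize both posets from a single combinatorial gadget attached to $\Lambda(G)$. Recall the standard description of faces of a Voronoi cell: for $x$ in the relative interior of a face $F$ of $V_0$, the set $\mathrm{Eq}(x):=\{\lambda\in\Lambda(G)\mid \norm{x-\lambda}=\norm{x}\}$ of lattice points tied with the origin depends only on $F$ (write it $\mathrm{Eq}(F)$), the assignment $F\mapsto\mathrm{Eq}(F)$ is an order‑reversing injection, and $F=V_0\cap\operatorname{aff}(F)$ with $\operatorname{aff}(F)=\bigcap_{\lambda\in\mathrm{Eq}(F)}H_\lambda$, where $H_\lambda:=\{y\in\mathcal{F}(G)\mid\inner{y,\lambda}=\tfrac12\norm{\lambda}^2\}$ is the bisector hyperplane; in particular $\dim F=\genus(G)-\dim_{\R}\operatorname{span}\mathrm{Eq}(F)$. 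We will also use constantly that $y\in V_0$ iff $\inner{y,\lambda}\le\tfrac12\norm{\lambda}^2$ for all $\lambda\in\Lambda(G)$. Given all this, the theorem reduces to building an order‑preserving bijection $\Phi\colon\mathcal{FP}(G)\to\mathcal{SC}(G)$, $F\mapsto(H_F,D_F)$, where $E(H_F):=\bigcup_{\lambda\in\mathrm{Eq}(F)}\supp\lambda$ and $D_F$ is a common orientation read off from the signs of the elements of $\mathrm{Eq}(F)$, in such a way that $\operatorname{span}_{\R}\mathrm{Eq}(F)=\mathcal{F}(H_F)$; the grading then matches automatically because $\dim\mathcal{F}(H_F)=\genus(H_F)$, so $\dim F=\genus(G)-\genus(H_F)=\deg(H_F,D_F)$.

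\textbf{Two preparatory lemmas.} The first is a conformal decomposition lemma, proved by pure convexity. If $\lambda\in\mathrm{Eq}(x)$ and $\lambda=c_1+\dots+c_k$ is a decomposition into directed circuits conformal with $\lambda$ (so $\inner{c_i,c_j}=\#(\text{common edges})\ge0$), then from $\inner{x,\lambda}=\sum_i\inner{x,c_i}$ and $\tfrac12\norm{\lambda}^2=\sum_i\tfrac12\norm{c_i}^2+\sum_{i<j}\inner{c_i,c_j}$, together with the Voronoi inequalities $\inner{x,c_i}\le\tfrac12\norm{c_i}^2$, one forces $\inner{c_i,c_j}=0$ for all $i\ne j$ and $\inner{x,c_i}=\tfrac12\norm{c_i}^2$ for all $i$; i.e. the $c_i$ are pairwise edge‑disjoint and each lies in $\mathrm{Eq}(x)$. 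In particular every equidistant vector is $\{0,\pm1\}$‑valued and is a disjoint union of directed circuits that themselves lie in $\mathrm{Eq}(F)$. The second is the reverse construction: given $(H,D)$ with $D$ a strongly connected orientation of $H$, let $\sigma\in\{-1,0,1\}^E$ be its sign vector ($\sigma_e=\pm1$ according to whether $D$ agrees with the reference orientation on $e\in E(H)$, and $\sigma_e=0$ otherwise) and set $y_{(H,D)}:=\proj_{\mathcal{F}(G)}(\tfrac12\sigma)$. Since $\lambda_e^2-\lambda_e\sigma_e\ge0$ for every integer $\lambda_e$ (with equality iff $\lambda_e\in\{0,\sigma_e\}$), we get $\tfrac12\norm{\lambda}^2-\inner{y_{(H,D)},\lambda}=\tfrac12\sum_e(\lambda_e^2-\sigma_e\lambda_e)\ge0$ for all $\lambda\in\Lambda(G)$, so $y_{(H,D)}\in V_0$, and $\mathrm{Eq}(y_{(H,D)})$ is precisely the set of $\{0,1\}$‑flows conformal with $D$, i.e. the unions of edge‑disjoint directed circuits of $(H,D)$; as $D$ is strongly connected these cover $E(H)$ and span $\mathcal{F}(H)$, and their common conformal orientation is $D$.

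\textbf{The coherence lemma (main obstacle).} The heart of the argument, and the step I expect to be hardest, is to show that $F\mapsto(H_F,D_F)$ is well defined, which amounts to: (i) any two vectors of $\mathrm{Eq}(F)$ are mutually conformal (agree in sign on common edges), so the orientations assemble into one orientation $D_F$ of $H_F$; (ii) $D_F$ is strongly connected — immediate from the decomposition lemma, since every edge of $H_F$ then lies in a directed circuit of $D_F$, which is exactly the defining condition; and (iii) $\mathrm{Eq}(F)$ consists of \emph{all} unions of edge‑disjoint directed circuits of $(H_F,D_F)$, equivalently $\operatorname{span}_{\R}\mathrm{Eq}(F)=\mathcal{F}(H_F)$ — so $\mathcal{F}(H_F)$ is a cyclic flat of the graphic matroid. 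Parts (i) and (iii) are the genuinely combinatorial points where the arithmetic of the integer flow lattice is used: for (i) one assumes two equidistant directed circuits disagree on an edge and produces a short lattice vector violating a Voronoi inequality, and (iii) is a saturation argument forcing every directed circuit supported on $E(H_F)$ into $\mathrm{Eq}(F)$. Granting the coherence lemma one has $\mathrm{Eq}(F)=\mathrm{Eq}\big(y_{(H_F,D_F)}\big)$, so $\operatorname{aff}(F)$ depends only on $(H_F,D_F)$; with $F=V_0\cap\operatorname{aff}(F)$ this gives injectivity of $\Phi$, while the construction $y_{(H,D)}$ gives surjectivity, with inverse $(H,D)\mapsto\big[\text{the face of }V_0\text{ containing }y_{(H,D)}\big]$.

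\textbf{Order and grading.} Finally, order‑preservation is formal: $F\subseteq F'$ iff $\mathrm{Eq}(F)\supseteq\mathrm{Eq}(F')$ iff ($H_{F'}\subseteq H_F$ and $D_{F'}$ is the restriction of $D_F$) iff $(H_F,D_F)\le(H_{F'},D_{F'})$ in $\mathcal{SC}(G)$, where the last equivalence uses that the directed circuits of $(H_{F'},D_{F'})$ are exactly those directed circuits of $(H_F,D_F)$ supported in $E(H_{F'})$; and the grading matches as noted above. Since the Voronoi‑cell generalities, the decomposition lemma, and the point $y_{(H,D)}$ only ever refer to a totally unimodular matrix, its circuits, and conformal sums, the same proof applies verbatim to a regular matroid once "strongly connected orientation of a subgraph" is read as "totally cyclic reorientation of a restriction of the associated oriented matroid".
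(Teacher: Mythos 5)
Your argument lands on essentially the same isomorphism as the paper's, but your surjectivity argument is a genuinely different and slicker route: rather than first classifying vertices of $V_0$ (Lemma~\ref{vertex}) and then realising each face as the convex hull of the vertex set $S_\mu$ (Lemma~\ref{lem:surj}), you produce for each $(H,D)\in\mathcal{SC}$ the explicit witness $y_{(H,D)}=\proj_{\mathcal{F}}(\tfrac12\sigma)$ and read off from the scalar identity $\tfrac12\norm{\lambda}^2-\inner{y_{(H,D)},\lambda}=\tfrac12\sum_e(\lambda_e^2-\sigma_e\lambda_e)\ge 0$ both that $y_{(H,D)}\in V_0$ and that $\mathrm{Eq}(y_{(H,D)})$ is exactly the set of conformal $\{0,\pm1\}$-flows of $(H,D)$; this collapses two lemmas into one computation and makes the inverse map explicit. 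Your decomposition lemma similarly packages the paper's Eulerian characterisation (Lemma~\ref{Eulerian}) together with orthogonality of the circuit pieces into a single convexity chain $\tfrac12\norm{\lambda}^2=\inner{x,\lambda}\le\sum_i\tfrac12\norm{c_i}^2\le\tfrac12\norm{\lambda}^2$. What you flag as the coherence lemma is exactly where the work lives in the paper too, as Lemmas~\ref{Face consistent} and~\ref{faceiff}, and your sketches are pointed in the right direction, with two caveats. For conformality (your item (i)), the phrase ``produce a short lattice vector violating a Voronoi inequality'' is slightly too optimistic: $\nu=\lambda+\mu$ does not directly violate anything, since $\norm\nu^2\ge 2\inner{x,\nu}$ may still hold even when $\lambda_e\mu_e<0$ somewhere; one must pass to a conformal circuit decomposition of $\nu$ and sum the Voronoi inequalities over its pieces before the sign cancellation forces a contradiction --- this is precisely the $S$-function argument in Lemma~\ref{Face consistent}. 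Second, you use without comment that the conformal directed circuits of a strongly connected $(H,D)$ span $\mathcal{F}(H)$ (so that $\dim F=\genus(G)-\genus(H_F)$); this is true but not free --- it is the content of Theorem~\ref{basis} combined with Lemma~\ref{flowmatroid} --- and should be proved or cited rather than assumed.
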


\section{Generalisation to Regular Matroids}
Matroids are a combinatorial abstraction and generalisation of the properties of linear dependence in vector spaces, as well as cycles in graphs. Terminology borrows from both linear algebra and graph theory. Matroid theory has applications in a range of both pure and applied fields. In the context of this paper, the main advantage of working with matroids over graphs is the notion of {\em matroid duality}, which generalises planar graph duality.

\begin{definition}
A (finite) {\em matroid} $\mathcal{M}(E, \mathcal{I})$ is an ordered pair of a finite set $E$, called the {\em ground set}, and $\mathcal{I}$, the ``family of independent subsets of $E$'' which satisfies the following axioms
\begin{enumerate}
\item $\emptyset \in \mathcal{I}$.
\item If $A\in \calI$, then $A'\in\mathcal{I}$ for all $A' \subset A$.
\item If $A, B \in \mathcal{I}$ with $|A| > |B|$, then $\exists x \in A\setminus B$ such that $B\cup \{x\} \in \mathcal{I}$.
\end{enumerate}
\end{definition}

Maximal independent sets in a matroid  are called {\em bases}. A subset of $E$ which is not independent is {\em dependent}; dependent sets in matroids are also called {\em cycles}, and minimally dependent sets are called {\em circuits}. Matroids have several equivalent definitions in terms of bases, cycles, etc. We include the equivalent definition in terms of cycles, as a set-up for oriented matroids:

\begin{definition}
A (finite) {\em matroid} $\mathcal{M}(E, \mathcal{I})$ is an ordered pair of a finite set $E$, called the {\em ground set}, and $\mathcal{C}$, the ``family of circuit subsets of $E$'' which satisfies the following axioms
\begin{enumerate}
\item $\emptyset \notin \mathcal{C}$.
\item No circuit is properly contained in another circuit.
\item If $C_1, C_2 \in \mathcal{C}$ are distinct circuits with $e \in C_1\cap C_2$, then $C_1\cap C_2\setminus \{e\}$ contains a circuit.
\end{enumerate}

\end{definition}

\begin{example} The following are two motivating examples of matroids:
\begin{enumerate}
\item If $V$ is vector space over a field $\mathbb F$, any set (arrangement) $E$ of vectors of $V$ forms the ground set of a matroid whose independent subsets are the subsets of linearly independent vectors. Matroids arising this way are called {\em representable} over $\mathbb F$. An $\mathbb F$-representation of the matroid is a matrix $M$, whose columns are the vectors in $E$, expressed in some fixed basis, and $E$ -- with the linearly independent subsets -- is called the {\em column matroid} of $M$.

\item Given a finite graph $G$, let $E$ be the set of edges of the graph $G$, and $\mathcal{I}$ be the family of cycle-free subsets of $E$; these are called {\em forests}. The pair $\mathcal{M}(G)=(E, \mathcal{I})$ is a matroid, called the {\em graphical matroid} (or {\em cycle matroid}) associated to $G$. In the graphical matroid, cycles are graph cycles, circuits are minimal cycles, and bases are spanning forests.
Note that $\mathcal{M}(G)$ is representable, over any field, as the column vectors of the signed incidence matrix $M_G$, given any choice of orientation for $G$. 
\end{enumerate}
\end{example}

\begin{definition}
A matroid is {\em regular} if it is representable over any field $\mathbb{F}$.
\end{definition}

By a theorem of Tutte \cite[Theorem 6.6.3]{Oxley}, regular matroids are exactly those which can be represented as the column matroid of some  {\em totally unimodular} $r\times m$ matrix of rank $r$, over the field $\mathbb R$. The columns of $M$ are indexed by the elements of the ground set $E$. A matrix is {\em totally unimdoular}, or TU for short, if every square submatrix has determinant $-1$, $0$, or $1$. For example, a TU matrix representing a graphical matroid can be obtained from the signed incidence matrix of the graph by deleting any row. In other words, graphical matroids are always regular.

Two $\mathbb F$-representations $M$ and $M'$ of a matroid are {\em equivalent} if there is an $r\times r$ invertible matrix $F$, an $m \times m$, $\mathbb F$-weighted permutation matrix $P$, and a field automorphism $\sigma: \mathbb F \to \mathbb F$ such that $M'=\sigma(FMP)$, where $P$ also permutes the column labels. Regular matroids are {\em uniquely representable} over any field $\mathbb F$ up to representation equivalence \cite[Corollary 10.1.4]{Oxley}.

Let $\mathcal{M}(E, \mathcal{I})$ be a regular matroid. Up to representation equivalence, $\mathcal{M}$ has a unique representation over $\mathbb{R}$ as a the columns of a totally unimodular (TU) matrix $M$. Let $c_e$ denote the column vector of $M$ corresponding to $e \in E$. 
Replacing a column $c_e$ with $-c_e$ preserves the fact that $M$ is TU and that $M$ represents $\mathcal{M}$. For a graphical matroid represented by its signed incidence matrix less a row, switching the orientation of an edge replaces the corresponding matrix column with its negative.

An {\em oriented matroid} \cite{BLSWZ} abstracts properties of oriented graphs, as well as arrangements of vectors in vector spaces over ordered fields. One way to define oriented matroids is to enhance the circuits of a matroid from merely sets to {\em signed sets}. A signed set $X$ is a set equipped with a partition $X=X_{+}\sqcup X_{-}$, in other words, each element has a sign attached. The {\em support} of a signed set is the underlying set, with signs forgotten. If $X$ is a signed set, let $-X$ denote the signed set identical to $X$ as a set, but with the partition reversed: $(-X)_+=X_-$ and $(-X)_-=X_+$.

\begin{definition}
An {\em oriented matroid} is an ordered pair $(E, \mathcal C)$, where $E$ is a set, called the {\em ground set}, and $\mathcal C$ is a collection of signed sets, called {\em signed circuits}, each supported by a subset of $E$. This data is subject to the axioms:
\begin{enumerate}
\item $\emptyset \notin \mathcal C.$
\item For all $X \in \mathcal C$, $-X \in \mathcal C.$
\item For all $X, Y \in \mathcal C,$ if $X \subseteq Y$ as sets, then ($X=Y$ or $X=-Y$).
\item For all $X,Y \in \mathcal C$, $X\neq -Y$, with an $e\in X_+ \cap Y_-$, there is a $Z\in \mathcal C$ such that $Z_+ \subseteq (X_+ \cup Y_+)\setminus \{ e\}$, and $Z_- \subseteq (X_- \cup Y_-) \setminus \{ e\}$.
\end{enumerate}
\end{definition}

\begin{example}\label{ex:OriMat} The following are two motivating examples for oriented matroids:
\begin{enumerate}
\item If $G$ is an oriented graph, the graphical matroid is naturally equipped with the structure of an oriented matroid. The signed circuits correspond to oriented circuits of the graph, where the edge signs are determined by whether the orientation of each edge in the circuit agrees or disagrees with the orientation of the circuit.

\item If $\mathcal M$ is a regular matroid, choosing a representing TU matrix $M$ gives rise to an oriented matroid whose ground set $\{c_1,...,c_n\}$ is the set of columns of $M$, and whose signed circuits are supported on the circuits of $\mathcal M$. For
each circuit (minimal dependent set of columns) $C=\{c_{e_1},...,c_{e_s}\}$ of $\mathcal M$, and linear dependence $\sum_{k=1}^s \alpha_k c_{e_k}=0$, there is a signed circuit with $C_+=\{c_{e_j}: \, \alpha_j > 0\}$ and $C_-=\{c_{e_j}: \, \alpha_j < 0\}$.
\end{enumerate}
\end{example}

\begin{definition}
Given a regular matroid $\mathcal M$ with a fixed representing TU matrix $M$, an {\em orientation} of $\mathcal M$ is a map $\epsilon: E \to \{-1, 1\}$. We use the notation $e \mapsto \epsilon_e$.
Each orientation of $\mathcal M$ gives rise to an oriented matroid, namely, multiply each column $c_e$ of $M$ by $\epsilon_e$, call this matrix $M^{{\epsilon}}$ and use it to construct an oriented matroid $\calM^{{\epsilon}}$ as above.
\end{definition}

The notions of graph cuts and flows extend naturally to regular matroids \cite{Su-Wagner}:

\begin{definition}
Given a regular matroid $\mathcal M$ with representing TU matrix $M$, $\mathcal{F}(\mathcal{M}):=\ker(M)$ is the {\em vector space of real-valued flows}. A bilinear form on $\mathcal{F}(\mathcal{M})$ is induced by the Euclidean inner product on $\R^E$.
The lattice of integer flows is $\Lambda(\mathcal{M}) := \ker(M) \cap \mathbb{Z}^E$, with the inner product inherited from $\mathcal{F}(\mathcal{M})$. 
\end{definition}

Up to isometry, $\mathcal{F}(\mathcal{M})$ and $\Lambda(\mathcal{M})$ are independent of the choice of representing TU matrix $M$. The genus of $\mathcal M$ and Voronoi cells are defined exactly as before. Next, we define strongly connected orientations:

\begin{definition}\label{def:SCMat}
A regular matroid $\mathcal M$, with orientation given by a choice of representing TU matrix $M$, is {\em strongly connected}\footnote{This notion also appears as {\em totally cyclic} in the matroid literature, for example in \cite{BLSWZ}.} if every $e\in E$ is part of a {\em positive cycle}. In other words, for every $e \in E$, there exists $z_e  \in (\mathbb{Z}_{\geq 0})^E$ such that $e + z_e \in \Lambda(\mathcal{M})$. An orientation $\epsilon$ of $(\mathcal M, M)$ is strongly connected if $(\mathcal M, M^{{\epsilon}})$ is strongly connected. 
\end{definition}

The following proposition follows from the theorems of \cite[Section 3.4]{BLSWZ}; we include a self-contained proof here:

\begin{proposition}
Let $G$ be an oriented graph, $\mathcal M(G)$ the corresponding graphical matroid, and $M$ the signed incidence matrix less a row. Then  $G$ is strongly connected if and only if $(\mathcal M (G), M)$ is strongly connected.
\end{proposition}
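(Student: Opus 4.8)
The plan is to prove the two implications separately, translating in each direction between the combinatorial notion of strong connectivity for the oriented graph $G$ and the algebraic notion for $(\mathcal M(G), M)$. Throughout I would use the elementary fact that $\ker M = \ker M_G = \mathcal{F}(G)$, where $M_G$ is the full signed incidence matrix of $G$: within each connected component of $G$ the rows of $M_G$ sum to zero, so the deleted row lies in the span of the remaining rows, and deleting it leaves the row space (hence the kernel) unchanged. Consequently $\Lambda(\mathcal M(G)) = \ker M \cap \mathbb{Z}^E$ is exactly the lattice of integer flows of $G$, and the statement "$e$ is part of a positive cycle in $(\mathcal M(G), M)$" says precisely that there is a nonnegative integer circulation $f$ on $G$ (with respect to the fixed orientation) with $f_e \geq 1$.

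For the forward direction, assume the orientation of $G$ is strongly connected, and fix $e=(u,v)\in E$. Strong connectivity gives a directed path $P$ from $v$ to $u$; since $e$ is directed from $u$ to $v$, a simple directed path from $v$ to $u$ cannot use $e$, so $\{e\}\cup E(P)$ is a directed cycle and its indicator $e+\chi_{E(P)}$ lies in $\ker M\cap\mathbb{Z}^E=\Lambda(\mathcal M(G))$. Taking $z_e:=\chi_{E(P)}\in(\mathbb{Z}_{\geq 0})^E$ exhibits $e$ as part of a positive cycle; this is the same computation as in the proof of Proposition~\ref{prop:PosetGraded}. Hence $(\mathcal M(G), M)$ is strongly connected.

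For the converse, assume $(\mathcal M(G), M)$ is strongly connected and fix $e=(u,v)$. There is $z_e\in(\mathbb{Z}_{\geq 0})^E$ with $f:=e+z_e\in\ker M$, so $f$ is a nonnegative circulation with $f_e\geq 1$. Let $S\subseteq V$ be the set of vertices reachable from $v$ by directed paths (so $v\in S$); no edge is directed from $S$ into $V\setminus S$, since such an edge would extend a directed path from $v$ beyond $S$. Summing the conservation equations $(\text{flow in})-(\text{flow out})=0$ over all vertices of $S$, the contributions of edges internal to $S$ cancel, leaving $\sum_{e'\colon V\setminus S\to S} f_{e'} = \sum_{e'\colon S\to V\setminus S} f_{e'} = 0$, the last equality because there are no edges from $S$ to $V\setminus S$. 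If $u\notin S$, then $e$ would be an edge from $V\setminus S$ into $S$, forcing this sum to be at least $f_e\geq 1$, a contradiction; hence $u\in S$ and there is a directed path from $v$ to $u$. As $e$ was arbitrary, the orientation of $G$ is strongly connected. (Equivalently, one could apply the integral flow-decomposition theorem to $f$ to produce a directed cycle through $e$, and hence a directed $v$-to-$u$ path.)

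The only real content is the converse, where one must extract a combinatorial routing statement from the mere existence of a nonnegative integer solution to the circulation equations; I expect this reachability/cut argument (equivalently, flow decomposition) to be the main step, though it is entirely standard. Identifying $\ker M$ with the cycle space and matching up the two definitions of strong connectivity is routine bookkeeping.
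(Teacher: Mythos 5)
Your proof is correct and follows essentially the same approach as the paper: the forward direction builds the positive cycle $e+z_e$ from a directed $v$-to-$u$ path exactly as in the paper, and the converse runs the same cut/Kirchhoff argument (you sum the conservation equations over the reachable set, the paper over its complement, but it is the same contradiction). The parenthetical alternative via flow decomposition is a genuine simplification worth noting, but the main argument you give matches the paper's.
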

\begin{proof}
First assume that $G$ is strongly connected. Then signed incidence matrix $\widetilde{M}$ (with all rows included) determines a linear transformation $\widetilde{M}: \mathbb R^{E} \to \mathbb R^{V}$, with standard bases given by the vertices and edges. Let $c_{e}$ denote the column of $\widetilde{M}$ indexed by $e$.  That is, for $e=(u,v)$, $\widetilde{M}(e)=c_{e}=v-u$. 

By the definition of strongly connected graphs, for every directed edge $e=(u,v)$, there exists a directed path $v \xrightarrow{Z_e} u$.  Let $z_{e}:= \sum_{e'\in Z_{e}} e'$. Observe that $\widetilde{M}(z_e) =  u -  v$, which implies $\widetilde{M}( e +  z_e) = 0$, so $ e +  z_e \in \Lambda(\mathcal{M})$. This remains true after deleting any single row of $\widetilde{M}$, hence, graphical matroids of strongly connected graphs are strongly connected as oriented matroids.

Now assume that $(\mathcal M (G), M)$ is strongly connected as an oriented regular matroid. An element $e \in E$ represents a directed edge $e=(u,v)$ in $G$. The sum $e + z_{e}\in \Lambda(\mathcal M(G))=\Lambda(G)$ is a positive cycle in $\Lambda(G)$. Write $e + z_{e}= \sum_{e' \in E} \alpha_{e'} e'$, where $\alpha_{e'}\in \mathbb Z_{\geq 0}$ for all $e' \in E$, and $\alpha_e > 0$. Being a cycle, $e+z_e$ satisfies {\em Kirkhoff's law}, meaning that for any vertex $w$ of G, the sum of wights of incoming edges to $w$ is the same as the sum of weights of outgoing edges from $w$: $$\sum_{\xrightarrow{e'}w} \alpha_{e'} - \sum_{w \xrightarrow{e''}} \alpha_{e''}=0.$$

To prove that $G$ is strongly connected, we need to show that there is a directed path from $v$ to $u$. Let $T$ be the set of vertices reachable via directed paths from $v$, and $S$ be the set of vertices not reachable. By contradiction, assume that $u \in S$. The sets $T$ and $S$ partition the vertex set of $G$, and by definition, all edges between them are directed from $S$ to $T$. There is at least one such edge, namely $e$.

Consider the sum $$0=\sum_{w\in S} \left( \sum_{\xrightarrow{e'}w} \alpha_{e'} - \sum_{w \xrightarrow{e''}} \alpha_{e''} \right).$$
For each edge $e'$ whose beginning and end are both in $S$, $\alpha_{e'}$ appears in this sum twice, with opposite signs, hence cancels. Furthermore, there are no directed edges from $T$ to $S$. Hence, the sum simplifies to
$$0 = -\sum_{\substack{w \xrightarrow{e''} t \\ w \in S, t \in T}}\alpha_{e''}$$
In this sum the weights of edges from $S$ to $T$ only appear once, with negative signs. All these weights are non-negative, and at least one of them, $\alpha_e$, is positive. Therefore, the sum is negative, a contradiction.
Hence, any edge $e$ is part of a positive circuit and $G$ is strongly connected.
\end{proof}

We are now ready define the poset $\mathcal{SC}$ for regular matroids. 

\begin{definition}
Let $\mathcal M$ be a regular matroid with a fixed representing TU matrix $M$. The elements of $\mathcal{SC}(\mathcal{M})$ are ordered pairs of the form $(\mathcal{N}, D_\mathcal{N})$, with
\begin{itemize}
\item $\mathcal{N}$ an (unoriented) sub-matroid of $\mathcal{M}$, that is, a subset of $E$ with the induced independent sets;
\item $D_\mathcal{N}$ is an orientation of $\mathcal N$, that is, $D_{\mathcal N}$ is obtained from the submatrix of $M$ corresponding to the columns labelled by the ground set of $\mathcal{N}$, and multiplying some columns by $-1$;
\item $(\mathcal N, D_{\mathcal N})$ is strongly connected as an oriented matroid, as in Definition~\ref{def:SCMat}.
\end{itemize}

Define an ordering $(\mathcal{N}, D_\mathcal{N}) \leq (\mathcal{N}', D_{\mathcal{N}'})$ if $\mathcal{N}'$ is a sub-matroid of $\mathcal{N}$ and $D_{\mathcal{N}'}$ is the orientation induced from $D_\mathcal{N}$, that is, the submatrix of $D_\mathcal{N}$ corresponding to the columns in $\mathcal{N'}$, with the same signs. 
\end{definition}

\begin{remark}\label{rmk:Graded}
Note that, just like it is for graphs, $\mathcal{SC}(\mathcal{M})$ is graded by the genus of $\calM$ minus the genus of $\mathcal N$, where $\genus \mathcal{N} =\dim\mathcal{F}(\mathcal{N})$. The proof of this fact is identical to that of Proposition~\ref{prop:PosetGraded} (which is based on \cite{Amini}). The maximal element of $\mathcal{SC}(\mathcal{M})$ is the pair $(\emptyset, \emptyset)$, with genus zero.
\end{remark}

This allows for the generalisation of Amini's theorem to regular matroids:
\begin{theorem}\label{main}
For a finite regular matroid $\mathcal{M}$, $\mathcal{FP}(\mathcal{M}) \cong \mathcal{SC}(\mathcal{M})$ as graded posets.
\end{theorem}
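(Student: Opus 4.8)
The plan is to mimic the graph case of Amini's theorem, replacing graph-theoretic gadgets with their oriented-matroid analogues throughout, and to exploit the fact that both sides of the claimed isomorphism are graded (by Remark~\ref{rmk:Graded} on the $\mathcal{SC}$ side). I would first set up the geometry of the Voronoi cell $V_0$ of $\Lambda(\mathcal M)$ inside $\mathcal F(\mathcal M)=\ker M$. Recall that $V_0$ is cut out by the inequalities $\norm{x}\le\norm{x-\lambda}$ for $\lambda\in\Lambda(\mathcal M)$, equivalently $\inner{x,\lambda}\le\tfrac12\norm{\lambda}^2$; the relevant (facet-defining) inequalities come from the \emph{Voronoi-relevant} vectors $\lambda$, and faces of $V_0$ are obtained by turning some subfamily of these inequalities into equalities. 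The first real step is therefore to identify, for a point $x$ in the relative interior of a face $F$, the set of lattice vectors $\lambda$ with $\inner{x,\lambda}=\tfrac12\norm{\lambda}^2$, and to package this data as a sub(oriented)matroid with an orientation. Concretely, to a face $F$ one associates the element $\bar x = $ (the projection onto $\ker M$ of) a generic point of $F$; the coordinates of $\bar x$ that are ``active'' (attain the relevant extremal value) should cut out a subset $E'\subseteq E$, and the \emph{signs} of those coordinates should specify an orientation $\epsilon$ on the complementary submatroid $\mathcal N = \mathcal M \setminus E'$ (or $\mathcal M|_{E\setminus E'}$, with the appropriate convention matching the partial orders, which are both ``reverse inclusion''). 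This is the candidate map $\Phi:\mathcal{FP}(\mathcal M)\to\mathcal{SC}(\mathcal M)$.

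Next I would verify the three things that make $\Phi$ an isomorphism of posets. (i) \emph{Well-definedness:} the pair $(\mathcal N, D_{\mathcal N})$ produced really is an element of $\mathcal{SC}(\mathcal M)$, i.e. the orientation $\epsilon$ is strongly connected on $\mathcal N$. This is where one uses that $x$ lies on the face and not merely in the cell: the ``tightness'' of the Voronoi inequalities forces every $e$ in the base set of $\mathcal N$ to lie on a positive circuit of $\mathcal N^\epsilon$, which is exactly the strong-connectivity (totally cyclic) condition. In the graph case this is the statement that the directed subgraph obtained is strongly connected; here it should follow from the oriented-matroid circuit axioms together with the sign pattern of $\bar x$, and the key input is the same linear-algebra/Kirchhoff-type argument already used in the proof that graphical strong-connectivity matches matroid strong-connectivity (the $S$/$T$ cut argument, reincarnated as a linear functional vanishing on $\ker M$). (ii) \emph{Order-reversing in both directions:} $F\subseteq F'$ iff $\Phi(F)\le\Phi(F')$; a larger face has fewer tight inequalities, hence a smaller active set $E'$, hence a larger $E\setminus E'$ and a sub-oriented-matroid relation in the required direction, with orientations restricting compatibly. (iii) \emph{Bijectivity:} construct the inverse. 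Given $(\mathcal N, D_{\mathcal N})\in\mathcal{SC}(\mathcal M)$, one must produce a face of $V_0$ realizing it; the natural candidate is the set of $x\in\ker M$ supported (in the ``active'' sense) on $E\setminus(\text{base set of }\mathcal N)$ with the prescribed signs and the prescribed extremal values on those coordinates, and one checks this is a nonempty face of $V_0$ whose relative interior maps back to $(\mathcal N,D_{\mathcal N})$. Finally, grading: the dimension of $F$ should equal $\genus(\mathcal N)=\dim\mathcal F(\mathcal N)$, matching the grading on $\mathcal{SC}(\mathcal M)$ via Remark~\ref{rmk:Graded} (degrees are genus of $\mathcal M$ minus genus of $\mathcal N$ on one side and codimension on the other); once $\Phi$ is a bijection intertwining two gradings on posets with a common top element, compatibility of the gradings can be pinned down at atoms/coatoms and propagated.

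The main obstacle I expect is step (i): proving that the submatroid-with-orientation extracted from a face is genuinely strongly connected (totally cyclic), and conversely that every strongly connected oriented submatroid arises, \emph{without} recourse to graph structure. In the graph setting Amini can reason with directed paths and the combinatorics of vertices; in the matroid setting vertices do not exist, so one must replace ``there is a directed path from $v$ to $u$'' by the purely linear-algebraic ``$e$ lies in the support of a nonnegative vector of $\ker M^\epsilon$'', and translate the cut argument into a statement about cocircuits (equivalently, about linear functionals factoring through $\R^E/\ker M$, i.e. rows of $M$). Concretely the delicate point is the precise description of Voronoi-relevant vectors for $\Lambda(\mathcal M)$ and the claim that the tight inequalities at a face biject with circuits/cocircuits of the right sign — this requires either citing a structural description of the Voronoi cell of an integral lattice obtained from a TU matrix, or reproving the needed part. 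I would handle this by isolating a lemma: for $x$ in the relative interior of a face $F$ of $V_0$, the ``active'' coordinate set $E'$ and sign vector determine a strongly connected orientation on $\mathcal M|_{E\setminus E'}$, proved by the same $S$/$T$-partition/Kirchhoff contradiction as in the earlier proposition, phrased with cocircuits in place of vertex cuts. Once that lemma is in place, the rest is bookkeeping parallel to \cite{Amini}.
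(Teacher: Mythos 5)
Your roadmap mis-specifies the central object. The paper's map $\phi$ sends a face $F$ of $V_0$ to the submatroid whose base set is $\bigcup_{\gamma\in\mathcal U(F)}\supp(\gamma)$, where $\mathcal U(F)$ is the set of \emph{circuits} $\gamma$ of $\calM$ whose bisecting hyperplanes $F_\gamma=\{x:2\inner{x,\gamma}=\norm\gamma^2\}$ contain $F$; the orientation is read off from the signs $\sgn(\gamma_e)$ of those circuits, and one proves (via consistency of the active circuits, Lemma~\ref{Face consistent}) that these signs agree across all of $\mathcal U(F)$. Your construction instead takes the ``active coordinates'' of a generic point $\bar x\in F$, signs those coordinates, and then puts the orientation on the \emph{complementary} submatroid. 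This is not the right object. It already fails on a triangle: the vertex $\gamma/2$ of $V_0$ (with $\gamma=e_1+e_2+e_3$) has all three coordinates equal to $\tfrac12$, so under your prescription $E'=E$ and $\mathcal N=\emptyset$; but the vertex of $V_0$ must correspond to the \emph{full} circuit matroid with its positive orientation, the minimum of $\mathcal{SC}$. More generally, the signs of the coordinates of a generic point in a face and the signs of the active circuits are different data (the coordinates of $\bar x$ on edges outside the active circuits' supports carry no usable information), and the order direction in (ii) comes out backwards from the poset conventions: $\mathcal{FP}$ is ordered by inclusion of faces, $\mathcal{SC}$ by reverse inclusion of submatroids, so smaller faces must correspond to \emph{larger} submatroids, not smaller ones, which is incompatible with ``fewer tight inequalities $\Rightarrow$ smaller active set $E'$ $\Rightarrow$ larger $E\setminus E'$''.

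Secondly, you locate the main difficulty in the wrong place. Proving that $\phi(F)$ is strongly connected is not the delicate step once the construction is right: each individual circuit $\gamma\in\mathcal U(F)$ is trivially totally cyclic on its own support, and the only thing to check is that the circuits in $\mathcal U(F)$ are mutually consistent (Lemma~\ref{Face consistent}), which is a short argument with the $S$-functional and a consistent circuit decomposition of $\lambda+\mu$. The Kirchhoff/$S$-$T$-cut argument you flag is used in the paper only to relate the graph-theoretic and matroid-theoretic notions of strong connectivity; it plays no role in the main proof. The genuine engine you omit is Tutte's consistent circuit decomposition (Theorem~\ref{consistent}) and its consequences: the classification of Voronoi-relevant vectors as Eulerian flows (Lemma~\ref{Eulerian}), the codimension computation via a consistent circuit basis of $\ker(M_\lambda)$ (Theorem~\ref{basis} and Lemma~\ref{codimension}), and the injectivity/surjectivity lemmas (Lemmas~\ref{faceiff} and~\ref{lem:surj}). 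Without these, the steps you list remain intentions rather than proofs; in particular you never actually establish the needed description of Voronoi-relevant vectors, which you yourself identify as a gap.
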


\section{Proof}
\subsection{Preliminaries}
In this section, we discuss the definitions and properties of necessary graph theoretical notions in the more general context of regular matroids. First we establish the following notation: 
\begin{itemize}
\item From here on let $\mathcal{M}(E, \mathcal{I})$ denote a regular matroid, represented by a TU matrix $M$. 
\item Let $\mathcal{F} := \mathcal{F}(\mathcal{M})$ and $\Lambda := \Lambda(\mathcal{M})$. For $\lambda \in \mathcal{F}$ and $e \in E$, let $\lambda_e:= \inner{\lambda, e}$. 
\item For $\lambda \in \mathcal{F}$, let $\supp(\lambda):= \{e \in E\ |\ \lambda_e \neq 0\}$ denote the {\em support} of the flow $\lambda$. Let $\col(\lambda) = \{c_e\ |\ e \in \supp(\lambda)\}$ be the set of columns of $M$ corresponding to $\supp(\lambda)$. Let $M_\lambda$ be the submatrix of $M$ with columns in $\col(\lambda)$. 
\item Let $\genus(\lambda):=\dim(\ker(M_\lambda))$.
\item For $x \in \mathbb{R}$, $\sgn(x) = \begin{cases} +1 & x > 0 \\ \phantom{-}0 & x = 0 \\ -1 & x < 0\end{cases}$.
\end{itemize}

By definition, a flow is always supported on a dependent set; for this reason, graph flows are alternatively referred to as cycles. We need a good notion of circuits for flows. We could say a flow $\lambda \in \calF$ is called a {\em circuit} if it is supported on a circuit, that is, any proper subset of $\col(\lambda)$ is a set of independent vectors. However, with this definition, any real multiple of a circuit flow is also a circuit. The notion of an Eulerian flow helps restrict this to a definition that is more in line with one's graph theoretic image of a circuit.

\begin{definition}
A nonzero flow $\lambda\in\mathcal{F}$ is {\em Eulerian} if $\lambda_e \in \{-1, 0, 1\}$ for all $e \in E$.
\end{definition}
Every flow of a graph supported on a circuit is a scalar multiple of an Eulerian flow. This is also true\footnote{While this is an elementary result, we haven't found a reference and hence include a proof.} for regular matroids:
\begin{proposition}
Let $\lambda \in \calF$ be a flow supported on a circuit. Then $\lambda = k\gamma$ for an Eulerian flow $\gamma$ and $k \in \mathbb{R}$.
\end{proposition}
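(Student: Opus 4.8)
The plan is to exploit the fact that a flow supported on a circuit lives in a one-dimensional kernel, and then use total unimodularity to pin down the coordinates. First I would observe that if $\lambda$ is supported on a circuit $C$, then $\col(\lambda)$ is a minimal dependent set of columns of $M$, so the columns indexed by $C$ span a space of dimension $|C|-1$; equivalently $\ker(M_\lambda)$ is one-dimensional. Hence any flow supported on (a subset of) $C$ is a scalar multiple of $\lambda$, and it suffices to produce a single Eulerian flow $\gamma$ supported on all of $C$; then $\lambda = k\gamma$ for the appropriate $k \in \R$.

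To construct $\gamma$, I would fix an element $e_0 \in C$ and consider the submatrix $M_\lambda$. Pick a column-index $e_0$ and note that $M_\lambda$ has rank $|C|-1$, so after deleting the row-space appropriately there is a maximal-rank square submatrix $B$ of $M_\lambda$ obtained by deleting the column $c_{e_0}$ and choosing $|C|-1$ independent rows. A vector in the kernel of $M_\lambda$ can then be written, by Cramer's rule, with coordinates proportional to $\pm\det$ of the matrices obtained from $B$ by replacing successive columns with $-c_{e_0}$. Because $M$ is TU, every such determinant is in $\{-1,0,1\}$; moreover, since $C$ is a circuit, the dependence $\sum_{e\in C}\alpha_e c_e = 0$ must have all $\alpha_e \neq 0$ (otherwise a proper subset of $C$ would be dependent), so each of these determinants is in fact $\pm 1$, never $0$. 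Setting $\gamma_e$ equal to these signed determinants (and $\gamma_{e}=0$ off $C$) gives an Eulerian flow supported exactly on $C$.

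Finally, since $\ker(M_\lambda)$ is one-dimensional and both $\lambda$ and $\gamma$ lie in it with $\gamma$ nonzero, we get $\lambda = k\gamma$ for a unique $k\in\R$, as desired. The only mildly delicate point is the bookkeeping in the Cramer's-rule step: making sure the chosen square submatrix $B$ is genuinely invertible (which follows from $\rk M_\lambda = |C|-1$), and that deleting rows of $M_\lambda$ does not change its kernel (it does not, since the deleted rows are linear combinations of the kept ones). I expect this determinant/rank bookkeeping to be the main obstacle, but it is routine linear algebra once the one-dimensionality of the kernel and the TU hypothesis are in hand; the genuinely matroid-theoretic input — that a circuit has a nowhere-zero dependence — is exactly what forces $\gamma$ to be supported on all of $C$ rather than a proper subset.
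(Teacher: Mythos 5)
Your proposal is correct and follows essentially the same strategy as the paper: isolate one element of the circuit, choose rows to get an invertible square submatrix on the remaining columns, apply Cramer's rule, and invoke total unimodularity to conclude the ratios of coordinates are $\pm 1$, with the nowhere-zero dependence of a circuit ruling out the value $0$. The paper phrases this by solving for $c_{n+1}$ as a combination of $c_1,\dots,c_n$ and applying Cramer directly, while you frame it via the one-dimensionality of $\ker(M_\lambda)$, but the two formulations are interchangeable.
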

\begin{proof} The proof is elementary linear algebra.
Let $\col(\lambda) = \{c_1, c_2, \cdots, c_n, c_{n+1}\}$. Since $\lambda $ is supported on a circuit, there are unique {\em nonzero} coefficients $a_1, a_2, \cdots, a_n \in \mathbb{R}$ such that
$$c_{n+1} = \sum_{k=1}^n a_k c_k$$

Since $c_1, c_2, \cdots, c_n$ are linearly independent, there exists a choice of $n$ rows such that the corresponding $n\times n$ submatrix $A$ has a nonzero determinant. As $M$ is a TU matrix, it follows that $\det A = \pm 1$. Let the vectors $d_1, d_2, \cdots, d_{n+1}$ be the vectors obtained from $c_1, c_2, \cdots, c_{n+1}$ by restricting to these rows, so 
$$d_{n+1} = \sum_{k=1}^n a_k d_k$$

For each $k = 1, 2, \cdots, n$, denote by $A^{(k)}$ the matrix obtained by replacing $k$-th column of $A$ with $d_{n+1}$. Note that $\det  A^{(k)} = a_k \det A$.
In addition,  $A^{(k)}$ is a submatrix of $M$ up to permutation of the columns, hence $\det A^{(k)} \in \{-1, 0, 1\}$ as $M$ is TU. As we have established that $a_k \neq 0$, it follows that $a_k = \pm 1$.

Let $\gamma = e_{n+1} - \sum_{k=1}^n a_k e_k$, which is an Eulerian flow as $M\gamma = c_{n+1} - \sum_{k=1}^n a_kc_k = 0$. From the uniqueness of $a_1, a_2, \cdots, a_n$ and the fact that $\lambda \in \Lambda$, it follows that $\lambda$ is a scalar multiple of $\gamma$.
\end{proof}

\begin{definition}
A {\em circuit in } $\Lambda$ is an Eulerian flow  supported on a circuit of $\calM$.
\end{definition}

As a final ingredient, we recall a classical theorem of Tutte on {\em consistent circuit decompositions}, and derive some of its consequences.

\begin{definition}
Two elements of $\lambda, \mu \in \mathbb{R}^E$, in particular two circuits, are {\em consistent} if $\lambda_e\mu_e \geq 0$ for all $e \in E$.
Define the relation $\mu \subseteq \lambda$ if there exists $\gamma \in \mathbb{R}^E$ such that $\lambda = \gamma + \mu$ and $\gamma, \mu$ are consistent with $\lambda$.
\end{definition}

Note that the containment  relation is transitive, and $\mu \subseteq \lambda$ implies $\lambda -\mu \subseteq \lambda$ and $\abs{\lambda_e} \geq \abs{\mu_e}$ for all $e \in E$, in other words, $\supp \mu \subseteq \supp \lambda$.
 
Given a flow $\lambda \in \calF$, if $\lambda_e \geq 0$ for all $e \in E$, then $\lambda$ is called a \textit{positive flow}; circuits in $\Lambda$ that are positive flows are called \textit{positive circuits}. The containment relation is particularly simple for positive flows: given two positive flows $\lambda$ and $\mu$, $\mu \subseteq \lambda$ if and only if $\mu_e \leq \lambda_e$ for all $e \in E$.

\begin{theorem}\label{consistent}
\cite[Theorem 5.43]{Tutte} Let $\lambda \in \Lambda$. There exists circuits $\gamma_1, \gamma_2, \cdots, \gamma_n$ such that $\lambda = \displaystyle\sum_{i=1}^n \gamma_i$, and $\gamma_i \subseteq\lambda$ for all $i = 1, 2, \cdots, n$. Such a decomposition is called a consistent circuit decomposition for $\lambda$.
\end{theorem}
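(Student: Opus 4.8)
The plan is to prove the statement by induction, after first reducing to the case of a \emph{positive} flow. Given $\lambda\in\Lambda$, reorient $\mathcal M$ by replacing each column $c_e$ with $\sgn(\lambda_e)c_e$; as discussed earlier this preserves total unimodularity, the underlying matroid, and its circuits, while turning $\lambda$ into the positive integer flow $\lambda'$ with $\lambda'_e=\abs{\lambda_e}$. The coordinatewise sign changes relating $\lambda$ and $\lambda'$ carry Eulerian flows to Eulerian flows and respect the relation $\subseteq$, so a consistent circuit decomposition of $\lambda'$ transports back to one of $\lambda$. Hence it suffices to decompose a nonzero positive integer flow $\lambda'$; note that for positive flows $\mu\subseteq\lambda'$ simply means $0\leq\mu_e\leq\lambda'_e$ for all $e\in E$.

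The heart of the matter is the claim that every nonzero positive integer flow $\lambda'$ contains a positive circuit: a $\{0,1\}$-valued Eulerian flow $\gamma$ supported on a circuit of $\mathcal M$ with $\gamma\subseteq\lambda'$. To prove this, let $\mathcal T$ be the (nonempty, since $\lambda'\in\mathcal T$) set of all nonzero positive integer flows whose support lies inside $\supp\lambda'$, and pick $\mu\in\mathcal T$ with $\abs{\supp\mu}$ minimal. I claim $\col(\mu)$ is a single circuit. Since $\col(\mu)$ is dependent it contains a circuit $C\subseteq\supp\mu$, carrying an Eulerian flow $\gamma$, unique up to sign by the preceding proposition. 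Suppose $C\subsetneq\supp\mu$. If some sign choice makes $\gamma\geq 0$, then $\gamma\in\mathcal T$ with support $C$, contradicting minimality. Otherwise $\gamma$ has a positive entry on $C$; set $t^{*}:=\min\{\mu_e: e\in C,\ \gamma_e=1\}\geq 1$ and $\mu':=\mu-t^{*}\gamma$. Then $\mu'$ is a positive integer flow (nonnegative by the choice of $t^{*}$, integral as $t^{*}\in\Z$ and $\gamma$ is $\{-1,0,1\}$-valued, and $M\mu'=0$), it is nonzero since $\mu'=0$ would force $\supp\mu=C$, and $\supp\mu'\subsetneq\supp\mu\subseteq\supp\lambda'$ because $\gamma$ lives on $C\subseteq\supp\mu$ and the minimizing coordinate of $\mu'$ vanishes; so $\mu'\in\mathcal T$ with smaller support, a contradiction. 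Therefore $C=\supp\mu$, the flow space supported on $C$ is one-dimensional (spanned by $\gamma$), and $\mu=k\gamma_{+}$ for the positive Eulerian flow $\gamma_{+}$ on $C$ and an integer $k\geq 1$. Since $C\subseteq\supp\lambda'$ and $\lambda'$ is integral, $\lambda'_e\geq 1=(\gamma_{+})_e$ on $C$, so $\gamma:=\gamma_{+}$ satisfies $\gamma\subseteq\lambda'$, proving the claim.

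Granting the claim, I would induct on $\sum_{e\in E}\lambda'_e\in\Z_{\geq 0}$. The case $\lambda'=0$ is trivial (empty decomposition). Otherwise choose a positive circuit $\gamma\subseteq\lambda'$; then $\lambda'-\gamma$ is a positive integer flow with $\lambda'-\gamma\subseteq\lambda'$ (by the remarks following the definition of $\subseteq$) and $\sum_e(\lambda'-\gamma)_e=\sum_e\lambda'_e-\abs{\supp\gamma}$ strictly smaller. By induction $\lambda'-\gamma=\sum_i\gamma_i$ with each $\gamma_i\subseteq\lambda'-\gamma$, hence $\gamma_i\subseteq\lambda'$ by transitivity, and $\lambda'=\gamma+\sum_i\gamma_i$ is the desired consistent circuit decomposition; transporting back through the reorientation (which preserves circuits and consistency with $\lambda$) completes the argument.

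The step I expect to be the real obstacle is the claim in the second paragraph: a priori the circuit $C$ sitting inside $\supp\mu$ need not carry a positive Eulerian flow, so one cannot peel circuits off greedily. Choosing $\mu$ of minimal support is exactly what defuses this---in the reduction one only needs to preserve support containment in $\supp\lambda'$ (automatic, since $\gamma$ is supported on $C\subseteq\supp\mu$), not the stronger relation $\subseteq\lambda'$, and minimality then forces $\col(\mu)$ down to a single circuit, which automatically carries a positive Eulerian flow. The remaining points---that reorientation preserves circuits and commutes with $\subseteq$, and that the subtracted flows stay integral---are routine given the preceding proposition and the earlier discussion of column sign changes.
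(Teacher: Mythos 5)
The paper states this theorem with a citation to Tutte and supplies no proof of its own, so there is no in-paper argument to compare against; your proposal fills that gap with a correct, self-contained argument. The structure is sound: reduce to a nonnegative integer flow $\lambda'$ by flipping column signs, show that any nonzero nonnegative integer flow dominates a positive circuit via a minimal-support element of $\mathcal{T}$, and then induct on $\sum_e\lambda'_e$. The case analysis inside the key lemma is handled correctly: when $C\subsetneq\supp\mu$, a nonnegative Eulerian flow on $C$ would give a smaller-support member of $\mathcal{T}$ directly, and in the mixed-sign case the modified flow $\mu-t^{*}\gamma$ stays nonnegative (by the choice of $t^{*}$), integral, nonzero (else $\supp\mu=C$), and has strictly smaller support. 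Minimality then forces $\col(\mu)$ to be a circuit, after which the one-dimensionality of $\ker M_C$ together with the preceding proposition yields the positive circuit $\gamma_{+}\subseteq\lambda'$, and the induction step goes through via transitivity of $\subseteq$.

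One small imprecision worth fixing: replacing $c_e$ by $\sgn(\lambda_e)c_e$ for \emph{all} $e\in E$ zeroes out the columns with $e\notin\supp(\lambda)$, since $\sgn(0)=0$, which changes the matroid (a zero column is a loop). The reorientation should act only on the columns indexed by $\supp(\lambda)$, exactly as in the proof of Theorem~\ref{basis}, or equivalently one should use the sign choice $\epsilon_e=-1$ if $\lambda_e<0$ and $\epsilon_e=+1$ otherwise. Since every circuit you subsequently manipulate is supported inside $\supp(\lambda')$ anyway, this does not threaten the argument, but it should be stated precisely.
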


Note that if $\lambda$ is Eulerian, then the circuits in a consistent decomposition for $\lambda$ are disjoint.

\begin{corollary}\label{arbitrary}
For $\lambda \in \Lambda$ and any circuit $\gamma \subseteq \lambda$, there exists a consistent circuit decomposition which includes $\gamma$.
\end{corollary}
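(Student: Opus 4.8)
The plan is to reduce the statement to Theorem~\ref{consistent}, applied not to $\lambda$ itself but to the flow $\lambda - \gamma$. First I would note that since $\gamma$ is a circuit in $\Lambda$, it lies in $\Lambda$, so $\lambda - \gamma \in \Lambda$ as well; moreover, the hypothesis $\gamma \subseteq \lambda$ together with the remark following the definition of the containment relation gives $\lambda - \gamma \subseteq \lambda$.

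Next, apply Theorem~\ref{consistent} to $\lambda - \gamma$. This produces circuits $\gamma_1, \dots, \gamma_n$ with $\lambda - \gamma = \sum_{i=1}^n \gamma_i$ and $\gamma_i \subseteq \lambda - \gamma$ for each $i$. Adding $\gamma$ to both sides yields $\lambda = \gamma + \sum_{i=1}^n \gamma_i$, a decomposition of $\lambda$ into circuits that manifestly contains $\gamma$.

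It then remains only to check that this is a \emph{consistent} circuit decomposition of $\lambda$, i.e.\ that each summand is contained in $\lambda$. The summand $\gamma$ is contained in $\lambda$ by hypothesis, and for the others transitivity of the containment relation (noted just after its definition) applied to $\gamma_i \subseteq \lambda - \gamma \subseteq \lambda$ gives $\gamma_i \subseteq \lambda$ for all $i$.

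I do not expect a genuine obstacle here; the one point to be careful about is that the notion of a consistent circuit decomposition of $\lambda$ only requires each part to be contained in $\lambda$ (not any mutual consistency or containment among the parts themselves), so that transitivity really does suffice. A more hands-on alternative would be to re-run the proof of Tutte's theorem, peeling circuits off $\lambda$ one at a time while staying consistent with $\lambda$ and starting the process with $\gamma$; but the reduction above is cleaner and avoids re-proving Theorem~\ref{consistent}.
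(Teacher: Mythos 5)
Your proposal is correct and follows essentially the same route as the paper: peel off $\gamma$, apply Theorem~\ref{consistent} to $\lambda - \gamma$, and put $\gamma$ back. You are in fact a bit more careful than the published argument in explicitly invoking transitivity ($\gamma_i \subseteq \lambda - \gamma \subseteq \lambda$) to verify the resulting decomposition is consistent with $\lambda$, and in flagging that consistency of a decomposition only requires each summand to be contained in $\lambda$; both points are right and worth spelling out.
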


\begin{proof}
If $\lambda$ is the zero flow, the statement is vacuously true. Otherwise, given a circuit $\gamma \subseteq \lambda$,  by definition $\lambda - \gamma \subseteq \lambda$, and by Theorem~\ref{consistent},  $\lambda-\gamma=\displaystyle\sum_{i=1}^n\gamma_i$.
Thus, $\lambda = \gamma+\displaystyle\sum_{i=1}^n\gamma_i$ is a consistent circuit decomposition of $\lambda$ which includes $\gamma$.
\end{proof}

\begin{corollary}\label{circuit exists}
For any $e \in \supp(\lambda)$, there exists a circuit $\gamma \subseteq \lambda$ such that $e \in \supp(\gamma)$.
\end{corollary}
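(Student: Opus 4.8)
The plan is to read this off directly from Tutte's consistent circuit decomposition, Theorem~\ref{consistent}. Given $\lambda \in \Lambda$ and $e \in \supp(\lambda)$, first I would apply Theorem~\ref{consistent} to obtain a consistent circuit decomposition $\lambda = \sum_{i=1}^n \gamma_i$ with $\gamma_i \subseteq \lambda$ for every $i$. Note that $\lambda \neq 0$ since $e \in \supp(\lambda)$, so this decomposition is nonempty.

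The one thing to check is that the coordinates at $e$ cannot cancel. By definition of the containment relation, $\gamma_i \subseteq \lambda$ forces $\gamma_i$ to be consistent with $\lambda$, i.e. $(\gamma_i)_e\,\lambda_e \geq 0$ for all $e \in E$. Hence in the scalar identity $\lambda_e = \sum_{i=1}^n (\gamma_i)_e$ every nonzero summand has the same sign as $\lambda_e$, so no cancellation occurs. If we had $(\gamma_i)_e = 0$ for all $i$, we would get $\lambda_e = 0$, contradicting $e \in \supp(\lambda)$. Therefore some index $i$ satisfies $(\gamma_i)_e \neq 0$; setting $\gamma := \gamma_i$ gives a circuit with $\gamma \subseteq \lambda$ and $e \in \supp(\gamma)$, as required.

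I do not expect any genuine obstacle here: all the substantive work is carried by Theorem~\ref{consistent}, and the remaining point — that consistency rules out sign cancellation in the $e$-th coordinate — is immediate from the definitions of "consistent" and "$\subseteq$". One could instead try to route the argument through Corollary~\ref{arbitrary}, but that would be needlessly indirect, so I would present the direct argument above.
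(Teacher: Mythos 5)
Your proof is correct and is essentially the paper's own argument: the paper takes a consistent circuit decomposition $\lambda = \sum_{i=1}^n \gamma_i$ and asserts $\supp(\lambda) = \bigcup_{i=1}^n \supp(\gamma_i)$, which is exactly the no-cancellation observation you spell out via consistency of signs. Your write-up just makes the sign argument explicit where the paper states the support equality as immediate.
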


\begin{proof}
Let $\lambda = \displaystyle\sum_{i=1}^n \gamma_i$ be a consistent circuit decomposition. Then
$\supp(\lambda) = \bigcup_{i=1}^n \supp(\gamma_i).$
Thus, for every $e \in \supp(\lambda)$, there exists a circuit $\gamma_i \subseteq \lambda$ such that $e \in \supp(\gamma_i)$.
\end{proof}

\begin{lemma}\label{union}
Let $\lambda, \mu \in \mathcal{F}$. Then there exists $k \in \mathbb{R}^{>0}$ such that $\supp(k\lambda + \mu) = \supp(\lambda)\cup \supp(\mu)$.
\end{lemma}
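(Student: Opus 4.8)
The plan is to treat $k$ as a parameter and exploit the fact that, for each fixed $e\in E$, the quantity $(k\lambda+\mu)_e = k\lambda_e+\mu_e$ is an affine-linear function of $k$, so it vanishes for at most one value of $k$; consequently all but finitely many choices of $k$ make every coordinate indexed by $\supp(\lambda)\cup\supp(\mu)$ nonzero. Note first that $k\lambda+\mu$ automatically lies in $\mathcal{F}$, since $\mathcal{F}$ is a linear subspace, so there is nothing to check on that front.

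First I would record the inclusion $\supp(k\lambda+\mu)\subseteq\supp(\lambda)\cup\supp(\mu)$, which holds for \emph{every} $k$: if $\lambda_e=\mu_e=0$, then $(k\lambda+\mu)_e=k\lambda_e+\mu_e=0$, so $e$ is not in the support of $k\lambda+\mu$. For the reverse inclusion I need to choose $k$ so that $k\lambda_e+\mu_e\neq 0$ for each $e\in\supp(\lambda)\cup\supp(\mu)$. Split into two cases according to whether $\lambda_e=0$. If $\lambda_e=0$, then $\mu_e\neq 0$ because $e$ lies in the union, and $k\lambda_e+\mu_e=\mu_e\neq 0$ no matter what $k$ is. If $\lambda_e\neq 0$, then $k\lambda_e+\mu_e=0$ happens exactly for $k=-\mu_e/\lambda_e$, a single value. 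Hence the set of "bad" parameters $B:=\{\,-\mu_e/\lambda_e : e\in\supp(\lambda)\,\}$ is finite, with $\abs{B}\leq\abs{E}$, and any $k\in\mathbb{R}^{>0}\setminus B$ has $\supp(k\lambda+\mu)=\supp(\lambda)\cup\supp(\mu)$; such a $k$ exists because $\mathbb{R}^{>0}$ is infinite.

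I do not anticipate any genuine obstacle here, as this is a routine genericity argument. The only mild point worth stating carefully is the case $\lambda_e=0$, $\mu_e\neq 0$, where no choice of $k$ can cancel the $e$-coordinate, so those coordinates remain in the support for every $k$ and need not be accounted for when picking $k$ away from $B$.
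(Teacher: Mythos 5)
Your proof is correct. The paper takes a different, more constructive route: it picks an explicit sufficiently large value, namely $k = \max\left\{\abs{\mu_e}/\abs{\lambda_e} : e \in \supp(\lambda)\right\}+1$, and then verifies via the reverse triangle inequality that $\abs{k\lambda_e+\mu_e}\geq k\abs{\lambda_e}-\abs{\mu_e}\geq\abs{\lambda_e}>0$ whenever $\lambda_e\neq 0$. Your argument instead observes that for each $e$ with $\lambda_e\neq 0$ the coordinate $k\lambda_e+\mu_e$ vanishes at exactly one value of $k$, so the set of bad parameters is finite and any $k\in\mathbb{R}^{>0}$ outside that finite set works. Both are elementary and correct; yours is a genericity argument that sidesteps the inequality estimate entirely and makes it transparent that \emph{almost every} positive $k$ works, while the paper's yields a concrete usable formula for $k$. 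Neither is preferable in any serious way for this lemma, though the explicit $k$ in the paper is occasionally convenient when one wants a uniform bound across several applications.
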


\begin{proof} It is clear that $\supp(k\lambda +\mu)\subseteq \supp(\lambda) \cup\supp(\mu)$ for any $k$.
For the other direction, one needs to choose sufficiently large $k$: let $$k = \max\left\{\dfrac{\abs{\mu_e}}{\abs{\lambda_e}}\ \middle|\ e \in \supp(\lambda)\right\}+1.$$

Let $e \in \supp(\lambda)\cup \supp(\mu)$.
If $\lambda_e = 0$, then $e \in \supp(\mu)$, hence $(k\lambda+\mu)_e = \mu_e \neq 0$ which implies $e \in \supp(k\lambda + \mu)$. Otherwise, if $\lambda_e \neq 0$, then
\begin{equation*}
\abs{(k\lambda+\mu)_e} = \abs{k\lambda_e + \mu_e}
\geq k\abs{\lambda_e} - \abs{\mu_e}
\geq \abs{\lambda_e} > 0,
\end{equation*}
which also implies $e \in \supp(k\lambda + \mu)$. Thus, $\supp(\lambda)\cup \supp(\mu) \subseteq \supp(k\lambda + \mu)$.
\end{proof}

The following is a strengthening of Theorem \ref{consistent}:

\begin{theorem}\label{basis}
Let $\lambda\in \mathcal{F}$ be nonzero. Then there exists a basis of $\ker(M_\lambda)$ consisting of circuits consistent with $\lambda$.
\end{theorem}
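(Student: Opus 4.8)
The plan is to prove something slightly stronger, namely that $\ker(M_\lambda)$ is \emph{spanned} by circuits consistent with $\lambda$ (where ``circuit'' always means a circuit in $\Lambda$, i.e.\ an Eulerian flow supported on a circuit of $\mathcal{M}$); a basis can then be extracted from any finite spanning set. Let $W \subseteq \ker(M_\lambda)$ be the span of those circuits $\gamma$ which lie in $\ker(M_\lambda)$ and are consistent with $\lambda$. The observation that makes everything work is that both $\ker(M_\lambda)$ and $W$ depend only on $\supp(\lambda)$ and the sign vector $(\sgn \lambda_e)_{e\in E}$: indeed $\ker(M_\lambda)$ is the kernel of the submatrix on the columns indexed by $\supp(\lambda)$, and the condition $\gamma_e \lambda_e \geq 0$ for all $e$ is automatic off $\supp(\lambda)$ and only sees $\sgn(\lambda_e)$ on $\supp(\lambda)$. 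Consequently we may freely replace $\lambda$ by any flow in $\mathcal{F}$ with the same support and signs. First I would choose a rational flow in $\mathcal{F}$ close to $\lambda$ (possible since $\mathcal{F}=\ker M$ is a rational subspace, and the flows supported on $\supp(\lambda)$ with the sign pattern of $\lambda$ form a relatively open cone containing $\lambda$), with the same support and signs, and clear denominators; so from now on we may assume $\lambda \in \Lambda$.

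Next I would note that $\lambda \in W$: a consistent circuit decomposition $\lambda = \sum_i \gamma_i$ from Theorem~\ref{consistent} has each $\gamma_i \subseteq \lambda$, hence consistent with $\lambda$, and $\supp(\gamma_i) \subseteq \supp(\lambda)$ forces $\gamma_i \in \ker(M_\lambda)$, so each $\gamma_i \in W$. The heart of the argument is then a perturbation trick showing $x \in W$ for an arbitrary $x \in \ker(M_\lambda)$. After scaling we may assume $x \in \Lambda$, and necessarily $\supp(x) \subseteq \supp(\lambda)$. Pick a positive integer $N$ with $N|\lambda_e| > |x_e|$ for every $e \in \supp(\lambda)$; then $N\lambda + x \in \Lambda$ has exactly the support of $\lambda$ and exactly the sign vector of $\lambda$. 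Applying Theorem~\ref{consistent} to $N\lambda + x$ gives a consistent circuit decomposition $N\lambda + x = \sum_j \delta_j$ with $\delta_j \subseteq N\lambda + x$; since $\supp(\delta_j) \subseteq \supp(N\lambda+x) = \supp(\lambda)$ and $N\lambda+x$ carries the signs of $\lambda$ there, each $\delta_j$ is a circuit consistent with $\lambda$ lying in $\ker(M_\lambda)$, so $\delta_j \in W$ and hence $N\lambda + x \in W$. Since also $N\lambda \in W$ (same argument, or the previous paragraph), we get $x = (N\lambda + x) - N\lambda \in W$. As $x$ was arbitrary, $W = \ker(M_\lambda)$, and extracting a basis from the spanning set of circuits consistent with $\lambda$ finishes the proof.

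I expect the only genuinely delicate point to be the reduction to the integral case, which is needed because Theorem~\ref{consistent} is stated for $\Lambda$ rather than for all of $\mathcal{F}$; this is handled by the support/sign invariance of $\ker(M_\lambda)$ and $W$ as above. Alternatively, if one records that the consistent (conformal) circuit decomposition is valid for all real flows in $\mathcal{F}$, the integrality bookkeeping disappears and the perturbation argument runs verbatim with $N\lambda + x$ replaced by $\lambda + tx$ for any sufficiently small $t>0$. Everything else is routine: the passage from a spanning set to a basis, and the elementary fact that whether a circuit is consistent with $\lambda$ is determined entirely by the sign pattern of $\lambda$.
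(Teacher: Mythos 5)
Your proof is correct, and it takes a genuinely different route from the paper's. The paper argues by induction on $\genus(\lambda)$: after reorienting so that $\lambda$ is a positive flow, it picks $\mu\in\ker(M_\lambda)$ whose support is maximal among those strictly contained in $\supp(\lambda)$, fixes a positive circuit $\gamma$ through some $e_0\in\supp(\lambda)\setminus\supp(\mu)$, uses Lemma~\ref{union} to establish the support dichotomy $(\star)$, deduces the splitting $\ker(M_\lambda)=\ker(M_\mu)\oplus\mathbb{R}\gamma$, and applies the inductive hypothesis to $\ker(M_\mu)$. Your argument is non-inductive and hinges on a single perturbation trick: for any $x\in\ker(M_\lambda)$, the flow $N\lambda+x$ (with $N$ large) has the same support and sign pattern as $\lambda$, so Theorem~\ref{consistent} writes it as a sum of circuits consistent with $\lambda$ and lying in $\ker(M_\lambda)$; subtracting $N\lambda$ then places $x$ in the span $W$ of such circuits. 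This dispenses entirely with the maximal-support apparatus, Lemma~\ref{union}, and $(\star)$, and it even yields the slightly stronger statement that $\ker(M_\lambda)$ is spanned by circuits consistent with $\lambda$, from which a basis is extracted. The modest price is the rational/integral reduction, needed only because Theorem~\ref{consistent} is stated for $\Lambda$ rather than $\mathcal{F}$, and, as you note, this too disappears if one invokes the real conformal circuit decomposition (e.g.\ from \cite{BLSWZ}). One small point worth making explicit: ``after scaling we may assume $x\in\Lambda$'' is literally true only for rational $x$, but that suffices here since $W$ and $\ker(M_\lambda)$ are both rational subspaces, and a rational subspace is determined by its rational points.
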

\begin{proof}
Without loss of generality, reorient $M_\lambda$ by replacing each $c_e \in \col(\lambda)$ with $\sgn(\lambda_e)c_e$. With respect to this orientation, $\lambda$ is a positive flow and the statement is equivalent to finding a positive circuit basis of $\ker(M_\lambda)$. 

We proceed by induction on $\genus(\lambda)$. If $\genus(\lambda) = 1$, then $\ker(M_\lambda)$ is spanned by a single circuit $\gamma$. Since $\lambda \in \ker(M_\lambda)$, $\lambda = a\gamma$ for some $a \in \mathbb{R}$. As $\lambda$ is a positive flow, $\sgn(a)\gamma$ is a positive circuit.

Assume that $\genus(\lambda) \geq 2$. Let $\mu \in \ker(M_\lambda)$ be such that $|\supp(\mu)|$ is maximal but not equal to $|\supp(\lambda)|$. 

\medskip
We prove the following statement first:
\begin{equation}\label{star}
\text{If $\xi \in \ker(M_\lambda)$ such that $\supp(\xi) \not\subseteq \supp(\mu)$, then }
\supp(\lambda)\setminus \supp(\mu)\subseteq \supp(\xi). \tag{$\star$}
\end{equation}
By Lemma \ref{union} there exists $k \in \mathbb{R}^{>0}$ such that $\supp(k\xi + \mu) = \supp(\xi) \cup \supp(\mu)$.  Thus,
$$|\supp(k\xi+\mu)| = |\supp(\xi) \cup \supp(\mu)| > |\supp(\mu)|.$$
By the maximality of $\supp(\mu)$, this implies that $\supp(\xi) \cup \supp(\mu) = \supp(\lambda)$. This implies \eqref{star}.

\medskip
We now prove the inductive step. Fix any $e_0 \in \supp(\lambda)\setminus\supp(\mu)$. By Corollary \ref{circuit exists}, there exist a positive circuit $\gamma$ such that $e_0 \in \supp(\gamma)$.  Hence, by \eqref{star}, we have
$\supp(\lambda)\setminus\supp(\mu) \subseteq \supp(\gamma).$

For an arbitrary flow $\delta \in \ker(M_\lambda)$, define $\delta' = \delta - \delta_{e_0}\gamma$. Notice that $\delta'_{e_0} = 0$, which implies
$$\supp(\lambda)\setminus \supp(\mu)\not\subseteq \supp(\delta').$$
This, by the contrapositive of \eqref{star}, implies that $\supp(\delta') \subseteq \supp(\mu)$.

As $\mu \subseteq \lambda$, we have $\ker(M_\mu) \subseteq \ker(M_\lambda)$. Since $\supp(\delta') \subseteq \supp(\mu)$, we have $\delta' \in \ker(M_\mu)$. Furthermore, $\gamma \not \in \ker(M_\mu)$ as $\supp(\gamma) \not \subseteq \supp(\mu)$. Thus for any $\delta \in \ker(M_\lambda)$, there exists a unique decomposition $\delta = \delta' + \delta_{e_0}\gamma$, hence
$$\ker(M_\lambda) = \ker(M_\mu) \oplus \mathbb{R}\gamma.$$

By the induction hypothesis, there is a positive circuit basis of $\ker(M_\mu)$. The positive circuit basis of $\ker(M_\lambda)$ is the union of this basis with $\gamma$.
\end{proof}

\subsection{Supporting lemmas}
With the above preliminaries in place, the supporting Lemmas of \cite{Amini} translate readily to the language of matroids. We include their proofs for completeness.
 
\begin{lemma}\label{lem:Halfway} \cite[Lemma 3]{Amini}
Given any $\lambda \in \Lambda$, $V_\lambda \cap V_0 \neq \emptyset$ if and only if $\lambda/2  \in V_\lambda \cap V_0$.
\end{lemma}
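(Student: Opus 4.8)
The plan is to exploit the fact that the Voronoi cell is defined by the inequalities $\norm{x-\lambda}\le\norm{x-\mu}$ for all $\mu\in\Lambda$, which for $x\in\calF$ reduce (after expanding and cancelling $\norm{x}^2$) to the linear conditions $\inner{x,\mu-\lambda}\le\tfrac12(\norm{\mu}^2-\norm{\lambda}^2)$. The key observation is that $\lambda/2$ is the unique point equidistant from $0$ and $\lambda$ lying on the segment between them, and the segment $[0,\lambda]$ is the natural candidate to meet $V_0\cap V_\lambda$. One direction is immediate: if $\lambda/2\in V_\lambda\cap V_0$ then the intersection is nonempty. So the content is the forward direction.

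First I would observe that $V_\lambda\cap V_0$ is a convex set, and that it is symmetric under the affine involution $x\mapsto\lambda - x$: this map sends $\calF$ to itself (since $\lambda\in\Lambda\subset\calF$), swaps the roles of $0$ and $\lambda$ as nearest lattice points, and more precisely sends $V_0$ to $V_\lambda$ and $V_\lambda$ to $V_0$ (using that $\Lambda$ is invariant under $\mu\mapsto\lambda-\mu$ together with the norm being unchanged under negation composed with translation — concretely, $\norm{(\lambda-x)-\mu} = \norm{x-(\lambda-\mu)}$). Hence the involution preserves $V_\lambda\cap V_0$. Now suppose $y\in V_\lambda\cap V_0$ is any point; then $\lambda-y$ also lies in $V_\lambda\cap V_0$, and by convexity the midpoint $\tfrac12(y+(\lambda-y)) = \lambda/2$ lies in $V_\lambda\cap V_0$ as well. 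This gives the claim.

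The step I expect to require the most care is verifying cleanly that the involution $x\mapsto \lambda-x$ interchanges $V_0$ and $V_\lambda$: one must check that $x\in V_0$ iff $\norm{x-0}\le\norm{x-\mu}$ for all $\mu\in\Lambda$, and rewrite this, via the substitution $x = \lambda - x'$ and $\mu = \lambda-\mu'$ (which ranges over all of $\Lambda$ as $\mu$ does), as $\norm{x'-\lambda}\le\norm{x'-\mu'}$ for all $\mu'\in\Lambda$, i.e. $x'\in V_\lambda$. This is a routine but slightly fiddly reindexing, and it is the only place where the lattice structure (closure of $\Lambda$ under $\mu\mapsto\lambda-\mu$) is genuinely used. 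Everything else — nonemptiness of one direction, convexity of Voronoi cells, and the midpoint argument — is formal. I would present the forward direction as the substantive part and dispatch the reverse direction in one line.
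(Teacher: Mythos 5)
Your proof is correct, and it takes a genuinely different route from the paper's. The paper works by direct computation: given $x \in V_\lambda \cap V_0$, it writes down the three inequalities $\norm{x} = \norm{x-\lambda}$, $\norm{x}\le\norm{x-\mu}$, and $\norm{x}\le\norm{x-(\lambda-\mu)}$, expands them into inner-product form ($\norm{\lambda}^2 = 2\inner{x,\lambda}$, $2\inner{x,\mu}\le\norm{\mu}^2$, $2\inner{x,\lambda-\mu}\le\norm{\lambda-\mu}^2$), and adds the last two to deduce $\norm{\mu}^2 + \norm{\lambda-\mu}^2 \ge \norm{\lambda}^2$, which after rearrangement is exactly $\norm{\lambda/2 - \mu}\ge\norm{\lambda/2}$ for arbitrary $\mu$. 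Your approach instead isolates the structural fact driving this: the affine involution $x\mapsto\lambda - x$ carries $V_0$ to $V_\lambda$ (using $\norm{(\lambda-x)-\mu} = \norm{x - (\lambda-\mu)}$ and the fact that $\mu\mapsto\lambda-\mu$ is a bijection of $\Lambda$), hence preserves $V_\lambda\cap V_0$; combined with convexity of the intersection, the midpoint $\lambda/2$ of $y$ and $\lambda - y$ must lie in it. Both proofs rest on the same two inputs — the identity $\norm{x-\mu}=\norm{(\lambda-x)-(\lambda-\mu)}$ and the half-space description of $V_0$ — but you package the first as an explicit symmetry and the second as convexity, whereas the paper keeps everything in inequality form. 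Your version is arguably more transparent and generalises immediately to any lattice (the specific structure of $\Lambda$ as a flow lattice is irrelevant); the paper's is self-contained and does not require invoking convexity as a separate fact. The one small thing worth stating explicitly if you write it up is the convexity of $V_0\cap V_\lambda$, which you correctly rely on but treat as obvious; it is indeed immediate since each Voronoi cell is an intersection of half-spaces.
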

\begin{proof}
Indeed if $\lambda/2 \in V_\lambda \cap V_0$, then $V_\lambda \cap V_0 \neq \emptyset$. For the other direction, recall that
$$V_\lambda := \{x \in \calF\ |\ \norm{x - \lambda} \leq \norm{x - \mu},\ \forall \mu \in \Lambda\}$$
Since $V_ \lambda \cap V_0$ is nonempty, we can choose $x \in V_\lambda\cap V_0$. Then for any $\mu \in \Lambda$, we have
$$\norm{x} = \norm{x-\lambda}, \qquad \norm{x} \leq \norm{x - \mu}, \qquad \norm{x} \leq \norm{x - (\lambda - \mu)}.$$
After squaring both sides and expanding the inner products, we obtain
$$\norm{\lambda}^2 = 2\inner{x, \lambda}, \qquad 2\inner{ x, \mu}\leq  \norm{\mu}^2, \qquad 2\inner{ x, \lambda-\mu } \leq \norm{\lambda - \mu}^2.$$
Therefore,
\begin{align*}
\norm{\mu}^2 + \norm{\lambda-\mu}^2 &\geq 2\inner{x, \mu} + 2\inner{x, \lambda-\mu} = 2\inner{ x, \lambda } = \norm{\lambda}^2.
\end{align*}
This is equivalent to
$$\norm{\lambda/2-\mu} \geq \norm{\lambda/2 - 0} = \norm{\lambda/2 - \lambda}.$$
As $\mu$ was arbitrary, it follows that $\lambda/2 \in V_0\cap V_\lambda$.
\end{proof}
\begin{lemma}\label{Eulerian} \cite[Lemma 3]{Amini}
Given any $\lambda \in \Lambda$, $V_\lambda \cap V_0 \neq \emptyset$ if and only if $\lambda$ is Eulerian.
\end{lemma}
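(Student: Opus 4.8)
The plan is to use Lemma~\ref{lem:Halfway} to turn the existential condition $V_\lambda\cap V_0\neq\emptyset$ into a concrete membership statement, and then to read that membership off as a family of linear inequalities indexed by $\Lambda$. By Lemma~\ref{lem:Halfway}, $V_\lambda\cap V_0\neq\emptyset$ is equivalent to $\lambda/2\in V_\lambda\cap V_0$, and since $\norm{\lambda/2}=\norm{\lambda/2-\lambda}$ this is in turn equivalent to $\lambda/2\in V_0$. Unwinding the definition of $V_0$, squaring and expanding the inner products exactly as in the proof of Lemma~\ref{lem:Halfway}, one sees that $\lambda/2\in V_0$ holds if and only if
$$\inner{\lambda,\mu}\le\norm{\mu}^2\qquad\text{for every }\mu\in\Lambda .$$
So the theorem reduces to showing that this family of inequalities holds precisely when $\lambda$ is Eulerian.

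For the ``if'' direction, suppose $\lambda$ is Eulerian and let $\mu\in\Lambda\subseteq\Z^E$. For each $e\in E$ we have $\lambda_e\mu_e\le\mu_e^2$: this is immediate if $\lambda_e=0$, and if $\lambda_e=\pm1$ then $\lambda_e\mu_e\le\abs{\mu_e}\le\mu_e^2$ since $\mu_e$ is an integer. Summing over $e$ yields $\inner{\lambda,\mu}\le\norm{\mu}^2$, as required.

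For the ``only if'' direction I would argue the contrapositive. If $\lambda$ is not Eulerian, fix $e_0\in\supp(\lambda)$ with $\abs{\lambda_{e_0}}\ge 2$. By Corollary~\ref{circuit exists} there is a circuit $\gamma\subseteq\lambda$ with $e_0\in\supp(\gamma)$; circuits in $\Lambda$ are Eulerian by definition, so $\abs{\gamma_e}=1$ on $\supp(\gamma)$ and $\norm{\gamma}^2=\abs{\supp(\gamma)}$. Since $\gamma\subseteq\lambda$ the flows $\gamma$ and $\lambda$ are consistent, hence $\lambda_e$ and $\gamma_e$ have the same sign for every $e$; thus $\lambda_e\gamma_e=\abs{\lambda_e}\ge 1$ for all $e\in\supp(\gamma)$, with $\lambda_{e_0}\gamma_{e_0}=\abs{\lambda_{e_0}}\ge 2$. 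Therefore
$$\inner{\lambda,\gamma}=\sum_{e\in\supp(\gamma)}\abs{\lambda_e}\ \ge\ \abs{\supp(\gamma)}+1\ >\ \norm{\gamma}^2 ,$$
so the inequality fails for $\mu=\gamma$, giving $\lambda/2\notin V_0$ and hence $V_\lambda\cap V_0=\emptyset$.

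I do not expect a serious obstacle: once Lemma~\ref{lem:Halfway} and the consistent circuit decomposition of Corollary~\ref{circuit exists} are in hand, the argument is short. The two steps that genuinely carry it are the integrality estimate $\abs{\mu_e}\le\mu_e^2$ for $\mu_e\in\Z$ in the ``if'' direction --- this is exactly what singles out Eulerian vectors rather than arbitrary cycle-flows --- and, in the ``only if'' direction, the choice of a circuit passing through a coordinate $e_0$ where $\abs{\lambda_{e_0}}\ge 2$, which is what manufactures the failing inequality.
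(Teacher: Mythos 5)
Your proof is correct and follows essentially the same route as the paper: both reduce via Lemma~\ref{lem:Halfway} to the condition $\inner{\lambda,\mu}\le\norm{\mu}^2$ for all $\mu\in\Lambda$, establish the ``if'' direction by the elementary termwise integer estimate, and obtain the ``only if'' direction from Corollary~\ref{circuit exists} together with the inequality $\inner{\lambda,\gamma}\ge\norm{\gamma}^2$ for a circuit $\gamma\subseteq\lambda$. The only cosmetic difference is that you phrase the forward implication as a contrapositive, whereas the paper argues directly that equality forces $\abs{\lambda_e}=1$ on each circuit's support and then covers $\supp(\lambda)$ by such circuits.
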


\begin{proof}
First assume $V_\lambda \cap V_0 \neq \emptyset$; this is equivalent to $\lambda/2 \in V_\lambda \cap V_0$ by Lemma~\ref{lem:Halfway}.
Let $\gamma$ be a circuit such that $\gamma \subseteq \lambda$. For each $e \in \supp(\gamma)$, we have $\abs{\gamma_e} = 1$ and $\lambda_e\gamma_e \geq 0$, and so
\begin{align*}
\inner{\lambda, \gamma} = \displaystyle\sum_{e \in \supp(\gamma)} \lambda_e\gamma_e =  \displaystyle\sum_{e \in \supp(\gamma)} |\lambda_e| \quad \geq \quad  \norm{\gamma}^2,
\end{align*}
where equality holds if and only if $|\lambda_e| = 1$ for all $e \in \supp(\gamma)$.

As $\lambda/2 \in V_\lambda \cap V_0$,  we have $\norm{\lambda/2 - \gamma}^2 \geq \norm{\lambda/2}^2$. This is equivalent to $\norm{\gamma}^2 \geq \inner{\gamma, \lambda}$. 
Combined with the previous inequality, we have $\inner{ \lambda, \gamma } = \norm{\gamma}^2$, which implies $\abs{\lambda_e} = 1$ for all $e \in \supp(\gamma)$. 

Furthermore, as every $e \in \supp(\lambda)$ is contained in some circuit $\gamma$ with $\gamma \subseteq \lambda$, we can conclude that $\abs{\lambda_e} = 1$ for all $e \in \supp(\lambda)$. Thus, $\lambda$ must be Eulerian.

Now, assume $\lambda$ is Eulerian. Then for any $\mu \in \Lambda$, 
\begin{align*}
\norm{\lambda/2 - \mu}^2 &= \sum_{e \in \supp(\lambda)} (\mu_e - \lambda_e/2)^2 + \sum_{e \in \supp(\mu)\setminus\supp(\lambda)} \mu_e^2\\
&\geq \sum_{e \in \supp(\lambda)} (1/2)^2\\
&= \norm{\lambda/2}^2,
\end{align*}
where the inequality uses that $\lambda$ is Eulerian.
Thus $\lambda/2 \in V_\lambda \cap V_0$, and so $V_\lambda \cap V_0 \neq \emptyset$.
\end{proof}

\begin{lemma}\label{codimension} \cite[Lemma 4]{Amini}
For any Eulerian flow $\lambda$, $V_\lambda \cap V_0$ lies in an affine plane with codimension equal to $\genus(\lambda)$. In particular, if the codimension is one, then $\lambda$ is a circuit.
\end{lemma}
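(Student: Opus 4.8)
The plan is to combine Lemmas~\ref{lem:Halfway} and~\ref{Eulerian} with the circuit basis produced by Theorem~\ref{basis}. Since $\lambda$ is Eulerian, Lemma~\ref{Eulerian} gives $V_\lambda\cap V_0\neq\emptyset$ and Lemma~\ref{lem:Halfway} gives $\lambda/2\in V_\lambda\cap V_0$; note also that $\lambda\neq 0$, so $\genus(\lambda)=\dim\ker(M_\lambda)\geq 1$. The idea is to pin down $V_\lambda\cap V_0$ inside $\calF$ by the linear equations coming from the circuits $\gamma\subseteq\lambda$: I will show each such circuit contributes the single affine constraint $\inner{x,\gamma}=\norm{\gamma}^2/2$, and then use a circuit basis of $\ker(M_\lambda)$ to cut out an affine plane of codimension exactly $\genus(\lambda)$ containing $V_\lambda\cap V_0$.

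The key step is: for every circuit $\gamma\subseteq\lambda$ and every $x\in V_\lambda\cap V_0$, one has $2\inner{x,\gamma}=\norm{\gamma}^2$. Being Eulerian, $\gamma$ lies in $\Lambda$, hence so does $\lambda-\gamma$. From $x\in V_\lambda\cap V_0$ we get $\norm{x}=\norm{x-\lambda}$, and applying the defining inequalities of $V_0$ to $\gamma$ and to $\lambda-\gamma$ we get $2\inner{x,\gamma}\leq\norm{\gamma}^2$ and $2\inner{x,\lambda-\gamma}\leq\norm{\lambda-\gamma}^2$, while expanding $\norm{x}=\norm{x-\lambda}$ gives $2\inner{x,\lambda}=\norm{\lambda}^2$. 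Because $\gamma\subseteq\lambda$ with $\lambda$ Eulerian, $\gamma$ and $\lambda-\gamma$ have disjoint supports — one can extend $\gamma$ to a consistent circuit decomposition of $\lambda$ by Corollary~\ref{arbitrary}, whose circuits are pairwise disjoint by the remark after Theorem~\ref{consistent} — and therefore $\norm{\lambda}^2=\norm{\gamma}^2+\norm{\lambda-\gamma}^2$. Adding the two inequalities then gives $\norm{\lambda}^2\leq\norm{\gamma}^2+\norm{\lambda-\gamma}^2=\norm{\lambda}^2$, so both inequalities are equalities; in particular $2\inner{x,\gamma}=\norm{\gamma}^2$. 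This mechanism — testing $x$ against both $\gamma$ and its complement $\lambda-\gamma$ and exploiting the orthogonality $\inner{\gamma,\lambda-\gamma}=0$ — is exactly the one used in Lemma~\ref{lem:Halfway}, and it is the step I expect to be the crux of the argument; everything after it is bookkeeping.

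To finish, fix by Theorem~\ref{basis} a basis $\gamma_1,\dots,\gamma_g$ of $\ker(M_\lambda)$ consisting of circuits consistent with $\lambda$, where $g=\genus(\lambda)$. Each $\gamma_i$ lies in $\ker(M_\lambda)$, so $\supp(\gamma_i)\subseteq\supp(\lambda)$; combined with consistency and the fact that $\lambda,\gamma_i$ are Eulerian, this yields $\lambda-\gamma_i$ consistent with $\lambda$, i.e.\ $\gamma_i\subseteq\lambda$, so the key step applies to each $\gamma_i$. Hence $V_\lambda\cap V_0$ is contained in the affine plane $P=\{\,x\in\calF:\inner{x,\gamma_i}=\norm{\gamma_i}^2/2 \text{ for } i=1,\dots,g\,\}$, which is nonempty since $\lambda/2\in P$. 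As the inner product on $\calF$ is positive-definite and the $\gamma_i$ are linearly independent, the functionals $x\mapsto\inner{x,\gamma_i}$ are linearly independent, so $P$ has codimension exactly $g=\genus(\lambda)$, proving the first claim.

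For the ``in particular'' statement, suppose $\genus(\lambda)=\dim\ker(M_\lambda)=1$, so $\ker(M_\lambda)=\R\lambda$. For each $e\in\supp(\lambda)$, deleting the column $c_e$ from $M_\lambda$ annihilates the kernel, because any nonzero element of $\ker(M_\lambda)$ is a nonzero multiple of $\lambda$ and hence has nonzero $e$-coordinate; thus $\col(\lambda)\setminus\{c_e\}$ is linearly independent for every $e$. Since $\col(\lambda)$ itself is dependent (it carries the nonzero flow $\lambda$) and each proper subset is independent, $\col(\lambda)$ is a circuit of $\calM$, and the Eulerian flow $\lambda$ supported on it is therefore a circuit in $\Lambda$.
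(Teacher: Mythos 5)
Your proof is correct and essentially the same as the paper's: the only variation is that, to establish $2\inner{x,\gamma}=\norm{\gamma}^2$ for each circuit $\gamma\subseteq\lambda$, you test $x\in V_0$ against the single complementary pair $\gamma$ and $\lambda-\gamma$ and invoke their orthogonality, whereas the paper extends $\gamma$ to a full consistent circuit decomposition of $\lambda$ and sums the inequalities over all its pieces --- logically equivalent, and your version is marginally leaner. You also supply an explicit argument for the ``in particular'' clause (that $\genus(\lambda)=1$ forces every proper subset of $\col(\lambda)$ to be independent, hence $\col(\lambda)$ is a circuit of $\calM$ and $\lambda$ a circuit of $\Lambda$), which the paper asserts without proof; that argument is correct.
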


\begin{proof}
Let $\gamma$ be any circuit such that $\gamma \subseteq \lambda$. If the $V_\lambda \cap V_0 \neq \emptyset$, then by Lemma \ref{Eulerian}, $\lambda$ is Eulerian and by Theorem~\ref{consistent} there exists a disjoint consistent circuit decomposition $\lambda = \gamma_1 + \cdots + \gamma_n$ with $\gamma = \gamma_1$, and $\inner{ \gamma_i, \gamma_j} = 0$ for $i \neq j$. Thus
$$\norm{\lambda}^2 = \norm{\displaystyle\sum_{i=0}^n \gamma_i}^2 = \displaystyle\sum_{i=0}^n \norm{\gamma_i}^2$$

Let $x \in V_\lambda \cap V_0$. Then $\norm{x-\lambda} = \norm{x}$ and  $\norm{x} \leq \norm{x-\gamma_i}$, for all $i = 1, 2, \cdots, n$. These are equivalent to $\norm{\lambda} = 2\inner{x, \lambda}$ and $2\inner{ x, \gamma_i } \leq \norm{\gamma_i}^2$. Putting these together,
\begin{align*}
\norm{\lambda}^2 = 2\inner{ x, \lambda } = 2\sum_{i = 1}^n \inner{ x, \gamma_i } \; \leq \; \sum_{i=1}^n\norm{\gamma_i}^2 = \norm{\lambda}^2
\end{align*}

It follows that each inequality $2\inner{ x, \gamma_i } \leq \norm{\gamma_i}^2$ is an equality. In particular for $i=1$, we have $2\inner{x, \gamma} = \norm{\gamma}^2$. As $\gamma \subseteq \lambda$ was arbitrary, we have proven that $2\inner{x, \gamma} = \norm{\gamma}^2$ for all $\gamma \subseteq \lambda$.

Let $\genus(\lambda) = g$. Let $\alpha_1, \alpha_2, \cdots, \alpha_g$ be the circuit basis of $M_\lambda$ such that $\alpha_i \subseteq \lambda$ for all $i = 1, 2, \cdots, g$, which exists by Theorem \ref{basis}. Then $V_\lambda \cap V_0$ is contained in the plane
$$\{x \in \mathcal{F}\ |\ 2\inner{x, \alpha_i} = 2\norm{\alpha_i}^2,\ \text{for all }i=1, 2, \cdots, g\}$$
This plane has codimension $g$ as it is defined by $g$ independent linear equations. Note that the linear equations obtained are also independent of the rows of $M$.
\end{proof}

In view of Lemma~\ref{codimension} we fix the following notation.
Let $\Xi$ denote the set of circuits in $\Lambda$, that is, $\Xi := \{\gamma \in \Lambda\ |\ \gamma \text{ is a circuit}\}$.
For any circuit $\gamma \in \Xi$, let $F_\gamma := \{ x\in \mathcal{F}\ |\ 2\inner{x, \gamma} = \norm{\gamma}^2\}$ be the hyperplane which contains $V_\gamma \cap V_0$.

\begin{lemma}\label{Face consistent} \cite[Lemma 6]{Amini} If $\lambda,\mu \in \Xi$ are circuits and $V_0\cap F_{\lambda}\cap F_{\mu}\neq \emptyset$,
then $\lambda$ and $\mu$ are consistent.
\end{lemma}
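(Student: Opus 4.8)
The plan is to choose any point $x \in V_0 \cap F_\lambda \cap F_\mu$ and squeeze, out of the three conditions defining this intersection, the $\ell^1$-triangle inequality for $\lambda$ and $\mu$ into an equality; equality in that inequality is precisely the statement that $\lambda$ and $\mu$ are consistent.

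First I would unwind the hyperplane conditions: $x \in F_\lambda$ and $x \in F_\mu$ give $2\inner{x,\lambda} = \norm{\lambda}^2$ and $2\inner{x,\mu} = \norm{\mu}^2$, hence $2\inner{x,\lambda+\mu} = \norm{\lambda}^2 + \norm{\mu}^2$. Next I would apply Theorem~\ref{consistent} to the lattice vector $\lambda+\mu \in \Lambda$, obtaining a consistent circuit decomposition $\lambda+\mu = \sum_{i=1}^n \gamma_i$ with each $\gamma_i \in \Xi$ and $\gamma_i \subseteq \lambda+\mu$. (One may assume $\lambda+\mu \neq 0$: otherwise $F_\mu = F_{-\lambda}$, a hyperplane parallel to and disjoint from $F_\lambda$, contradicting the hypothesis.) Since $x \in V_0$ and each $\gamma_i \in \Lambda$, the Voronoi condition yields $2\inner{x,\gamma_i} \leq \norm{\gamma_i}^2$; summing over $i$ and combining with the identity above gives
$$\norm{\lambda}^2 + \norm{\mu}^2 \;=\; 2\inner{x,\lambda+\mu} \;=\; \sum_{i=1}^n 2\inner{x,\gamma_i} \;\leq\; \sum_{i=1}^n \norm{\gamma_i}^2 .$$

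The heart of the argument is then to evaluate both sides of this inequality in terms of $\ell^1$-norms. Each $\gamma_i$ is Eulerian, so $\norm{\gamma_i}^2 = \sum_e \abs{\gamma_{i,e}}$; and because the decomposition is consistent, for each $e$ the components $\gamma_{1,e},\dots,\gamma_{n,e}$ all carry the sign of $(\lambda+\mu)_e$, so $\sum_i \abs{\gamma_{i,e}} = \abs{\lambda_e + \mu_e}$, whence $\sum_i \norm{\gamma_i}^2 = \sum_e \abs{\lambda_e+\mu_e}$. Likewise, $\lambda$ and $\mu$ being Eulerian gives $\norm{\lambda}^2 = \sum_e \abs{\lambda_e}$ and $\norm{\mu}^2 = \sum_e \abs{\mu_e}$. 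The displayed inequality becomes $\sum_e(\abs{\lambda_e}+\abs{\mu_e}) \leq \sum_e \abs{\lambda_e+\mu_e}$, while the triangle inequality gives the opposite bound. Hence $\abs{\lambda_e+\mu_e} = \abs{\lambda_e}+\abs{\mu_e}$ for every $e$, which forces $\lambda_e\mu_e \geq 0$ for all $e$; that is, $\lambda$ and $\mu$ are consistent.

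I expect the main obstacle to be the middle computation $\sum_i \norm{\gamma_i}^2 = \sum_e \abs{\lambda_e+\mu_e}$: this is the one place that genuinely uses both that the circuits of the decomposition are Eulerian (so that $\norm{\cdot}^2$ coincides with the $\ell^1$-norm on each of them) and that the decomposition is consistent (so that no cancellation occurs when the $e$-components are summed). Everything else is bookkeeping with the Voronoi inequality $2\inner{x,\nu} \leq \norm{\nu}^2$ for $\nu \in \Lambda$ and with the defining equations of $F_\lambda$ and $F_\mu$.
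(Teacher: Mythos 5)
Your proof is correct and follows essentially the same route as the paper: both derive the chain $\norm{\lambda}^2+\norm{\mu}^2 = 2\inner{x,\lambda+\mu} \leq \sum_i\norm{\gamma_i}^2$ from a consistent circuit decomposition of $\lambda+\mu$ and the Voronoi inequality, and then recognise the endpoints as the two sides of the $\ell^1$-triangle inequality (the paper packages this via its auxiliary function $S$; you unpack it componentwise). Your aside handling $\lambda+\mu=0$ is a harmless extra; in the paper's version this case is excluded automatically since the derived equality $2\inner{x,\lambda+\mu}=S(\lambda)+S(\mu)>0$ cannot hold when $\lambda+\mu=0$.
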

\begin{proof}
Define
$S:\Lambda \to \mathbb{Z}^+$ by setting $$S(\lambda)= \displaystyle\sum_{e\in\supp(\lambda)} \abs{\lambda}.$$
Note that $S(\lambda) = \norm{\lambda}^2$ for an Eulerian flow $\lambda$. By the triangle inequality, $S(\lambda)+S(\mu) \geq S(\lambda+\mu)$, where equality holds if and only if $\lambda, \mu$ are consistent. Thus, it is enough to prove that $S(\lambda)+S(\mu)=S(\lambda+\mu).$

Let $x\in V_0\cap F_{\lambda}\cap F_{\mu}$. Then $2\inner{x, \lambda} = \norm{\lambda}^2 = S(\lambda)$ and $2\inner{x, \mu} = \norm{\mu}^2 = S(\mu)$. Thus,
$$2\inner{x, \lambda+\mu} = S(\lambda) + S(\mu).$$
Consider a consistent circuit decomposition $\lambda + \mu = \displaystyle\sum_{i=1}^n \gamma_i$, which exists by Theorem \ref{consistent}.
As $x \in V_0$, we must have $\norm{x - \gamma_i} \geq \norm{x}$ for all $i = 1, 2, \cdots, n$. This is equivalent to
$$S(\gamma_i) = \norm{\gamma_i}^2 \geq 2\inner{x, \gamma_i}$$
As each circuit $\gamma_i$ are consistent with $\lambda + \mu$, we also have
$$\displaystyle\sum_{i=1}^n S(\gamma_i) = S\left(\displaystyle\sum_{i=1}^n \gamma_i\right) = S(\lambda + \mu)$$
In conclusion,
\begin{align*}
S(\lambda) + S(\mu)= 2\inner{x, \lambda + \mu} = \sum_{i=1}^n 2\inner{x, \gamma_i} \leq \sum_{i=1}^n S(\gamma_i) = S(\lambda + \mu)\leq S(\lambda) + S(\mu)
\end{align*}
which implies $S(\lambda + \mu) = S(\lambda) + S(\mu)$, and the statement follows.
\end{proof}

\subsection{Proof of Theorem \ref{main}}
The proof strategy is to define a map $\phi:\mathcal{FP}(\mathcal{M}) \to \mathcal{SC}(\mathcal{M})$, we briefly preview this construction. For a face $F$, the map $\phi$ assigns the union of the supports of circuits corresponding to the codimension-one faces which contain $F$. There is a natural orientation for this sub-matroid induced by the same circuits, which is strongly connected. We prove that this map is order-preserving, injective, surjective, and takes codimension to genus: in other words, it is an isomorphism of graded posets.
\medskip

We begin with a piece of notation. For $\lambda \in \Lambda$, let $\mathcal{N}_\lambda$ be a sub-matroid of $\mathcal{M}$ with ground set $\supp(\lambda)$. Let $D_\lambda = \epsilon_\lambda$ be the orientation on $\mathcal{N}_\lambda$ defined by $\epsilon_\lambda(e) = \sgn(\lambda_e)$.

\begin{lemma}
$D_\lambda$ is a strongly connected orientation on $\mathcal{N}_\lambda$. In other words, $(\mathcal{N}_\lambda, D_\lambda) \in \mathcal{SC}(\mathcal M)$.
\end{lemma}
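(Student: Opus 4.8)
The plan is to reduce to the positive case and then read the statement off Corollary~\ref{circuit exists} almost directly. First I would reorient: replacing $M$ by its reorientation $M^{\epsilon}$ with $\epsilon(e)=\sgn(\lambda_e)$ for $e\in\supp(\lambda)$ and $\epsilon(e)=1$ otherwise, we may assume $\lambda$ is a \emph{positive} flow (this is the same normalisation used in the proof of Theorem~\ref{basis}). Under this reorientation $\lambda$ becomes the flow with entries $\abs{\lambda_e}$, the submatrix $M_\lambda$ becomes the matrix representing the oriented submatroid $(\mathcal N_\lambda,D_\lambda)$, and $D_\lambda$ becomes the all-$(+1)$ orientation; so the claim to prove is exactly that every $e\in\supp(\lambda)=E(\mathcal N_\lambda)$ lies in a positive cycle of $(\mathcal N_\lambda,D_\lambda)$.

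Next, for a fixed $e\in\supp(\lambda)$, I would apply Corollary~\ref{circuit exists} to obtain a circuit $\gamma\in\Xi$ with $\gamma\subseteq\lambda$ and $e\in\supp(\gamma)$. Since $\gamma$ is consistent with the positive flow $\lambda$, every coordinate $\gamma_{e'}\geq 0$, and since $\gamma$ is Eulerian we in fact have $\gamma_{e'}\in\{0,1\}$; in particular $\gamma_e=1$. Moreover $\supp(\gamma)\subseteq\supp(\lambda)$, so $\gamma\in\ker(M_\lambda)\cap\mathbb{Z}^{E}=\Lambda(\mathcal N_\lambda)$. Setting $z_e:=\gamma-e=\sum_{e'\in\supp(\gamma)\setminus\{e\}}\gamma_{e'}\,e'$, we get $z_e\in(\mathbb{Z}_{\geq0})^{E}$ supported within $\supp(\lambda)$, with $e+z_e=\gamma\in\Lambda(\mathcal N_\lambda)$. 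This is precisely the witness demanded by the definition of a strongly connected orientation, so $D_\lambda$ is strongly connected and $(\mathcal N_\lambda,D_\lambda)\in\mathcal{SC}(\mathcal M)$.

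The only step requiring any care — and it is bookkeeping rather than a genuine obstacle — is verifying that passing to the reorientation $M^{\epsilon}$ correctly identifies the flows of the oriented submatroid $(\mathcal N_\lambda,D_\lambda)$ with the elements of $\ker(M_\lambda)$, and that under this identification \emph{positive} flows of $M^{\epsilon}_\lambda$ correspond to flows of $M_\lambda$ consistent with $\lambda$ while Eulerian corresponds to Eulerian; this is just the diagonal change of coordinates $w\mapsto Dw$ with $D=\operatorname{diag}(\sgn(\lambda_{e'}))_{e'\in\supp(\lambda)}$, which commutes with the relevant operations. Once this identification is recorded, the argument above goes through with no further input beyond Corollary~\ref{circuit exists}.
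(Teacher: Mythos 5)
Your argument is correct, but it takes a small detour relative to the paper. After the reorientation step you correctly identify that it suffices to exhibit, for each $e\in\supp(\lambda)$, a non-negative integer vector $z_e$ supported in $\supp(\lambda)$ with $e+z_e\in\Lambda(\mathcal N_\lambda)$; you then invoke Corollary~\ref{circuit exists} to produce a circuit $\gamma\subseteq\lambda$ through $e$ and use $\gamma$ as the witness. The paper observes that you do not need a circuit at all: the definition of strongly connected only asks for a positive \emph{cycle}, and after reorienting by $D_\lambda$ the flow $\lambda$ itself becomes the positive integer cycle $\abs{\lambda}$, which has support exactly $E(\mathcal N_\lambda)$ and hence witnesses the condition simultaneously for every $e$ (taking $z_e=\abs{\lambda}-e$, which is non-negative since $\abs{\lambda_e}\geq 1$). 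So the paper's proof is a one-liner with no appeal to Corollary~\ref{circuit exists} or to Eulerian-ness; your route buys nothing extra here, though it does show that the witnessing cycle can always be taken to be a circuit. Your bookkeeping on the reorientation $M\mapsto M^{\epsilon_\lambda}$ and the identification of positive flows with $\lambda$-consistent flows is accurate and matches what the paper leaves implicit.
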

\begin{proof}
With respect to the orientation $D_\lambda$, $\lambda$ is a positive cycle. Hence, any $e \in \supp(\lambda)$ is part of a positive cycle, namely $\lambda$. Thus, $D_\lambda$ is a strongly connected orientation on $\mathcal{N}_\lambda$.
\end{proof}

We now define a map $\phi:\mathcal{FP}(\mathcal{M}) \to \mathcal{SC}(\mathcal{M})$; we will prove that $\phi$ is an isomorphism. For $F \in \mathcal{FP}(\mathcal{M})$, let $\mathcal{U}(F)$ denote the set of circuits corresponding to the codimension one faces which contain $F$, that is, $$\mathcal{U}(F) = \{\gamma \in \Xi\ |\ F \subseteq F_\gamma\}.$$
Define
$$\phi(F) := \left(\bigcup\limits_{\lambda \in \mathcal{U}(F)}\mathcal{N}_\lambda, \bigcup\limits_{\lambda \in \mathcal{U}(F)} D_\lambda \right). $$
Here,  $\displaystyle\bigcup\limits_{\lambda \in \mathcal{U}(F)}\mathcal{N}_\lambda$ denotes the sub-matroid of $\calM$ with ground set $\displaystyle\bigcup\limits_{\lambda \in \mathcal{U}(F)}\supp(\lambda)$.

By Lemma \ref{Face consistent}, for any $\lambda, \mu \in \mathcal{U}(F)$, $D_\lambda$ and $D_\mu$ are consistent. As $D_\lambda$ is a strongly connected orientation of $\mathcal{N}_\lambda$, it follows that $\bigcup\limits_{\lambda \in \mathcal{U}(F)} D_\lambda$ is a strongly connected orientation of $\bigcup\limits_{\lambda \in \mathcal{U}(F)}  \mathcal{N}_\lambda$. Thus, $\phi(F)\in \mathcal{SC}(\mathcal{M}),$ as claimed. 
It remains to show that $\phi$ is an isomorphism of graded posets.

\begin{lemma}\label{faceiff} \cite[Lemma 7]{Amini}
Given $\lambda \in \Xi$ a circuit, $\lambda \in \mathcal{U}(F)$ if and only if $(\mathcal{N}_\lambda, D_\lambda) \geq \phi(F)$ in the partial order of $\mathcal{SC}(\mathcal{M})$.
\end{lemma}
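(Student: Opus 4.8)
The plan is to verify the two implications separately; the forward one is essentially a matter of unwinding definitions, while the reverse one is the substantive half. For the \emph{forward} implication, suppose $\lambda\in\mathcal{U}(F)$. Then $\mathcal{N}_\lambda$ is one of the sub-matroids appearing in the union that defines $\phi(F)$, so its base set $\supp(\lambda)$ is contained in that of $\bigcup_{\mu\in\mathcal{U}(F)}\mathcal{N}_\mu$. Moreover the orientation $\bigcup_{\mu\in\mathcal{U}(F)}D_\mu$ (which is well defined, as noted above, since the $D_\mu$ are pairwise consistent by Lemma~\ref{Face consistent}) assigns to each $e\in\supp(\lambda)$ the sign $D_\lambda(e)$, hence restricts to $D_\lambda$ on $\supp(\lambda)$. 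By the definition of the partial order on $\mathcal{SC}(\mathcal{M})$, this is exactly the statement $(\mathcal{N}_\lambda,D_\lambda)\ge\phi(F)$.

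For the \emph{reverse} implication, assume $(\mathcal{N}_\lambda,D_\lambda)\ge\phi(F)$. Unwinding the order relation, this says precisely that $\supp(\lambda)\subseteq\bigcup_{\mu\in\mathcal{U}(F)}\supp(\mu)$ and that, for every $e\in\supp(\lambda)$, the sign $\sgn(\lambda_e)$ agrees with $D_\mu(e)$ for any $\mu\in\mathcal{U}(F)$ with $e\in\supp(\mu)$; in particular $\lambda$ is consistent with every such $\mu$. Following the device used in the proof of Theorem~\ref{basis}, I would first reorient the columns of $M$ indexed by $\bigcup_{\mu\in\mathcal{U}(F)}\supp(\mu)$ according to $\bigcup_{\mu\in\mathcal{U}(F)}D_\mu$: this is an isometry of $\R^E$ preserving $\Lambda$, $V_0$, the face $F$, and the family of hyperplanes $\{F_\gamma\}$, and by the sign condition it simultaneously turns $\lambda$ and every $\mu\in\mathcal{U}(F)$ into positive circuits. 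So I may assume all of these flows are positive, and the goal reduces to showing $F\subseteq F_\lambda$.

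Now fix $x\in F$. The key move is to build an auxiliary flow: for each $e\in\supp(\lambda)$ choose some $\mu^{(e)}\in\mathcal{U}(F)$ with $e\in\supp(\mu^{(e)})$ (possible by the support condition), and put $\omega:=\sum_{e\in\supp(\lambda)}\mu^{(e)}\in\Lambda$, a positive integer flow. Because the $\mu^{(e)}$ and $\lambda$ are Eulerian, $\omega_e\ge\mu^{(e)}_e=1=\lambda_e$ for $e\in\supp(\lambda)$ and $\omega_e\ge 0=\lambda_e$ otherwise, so $\lambda\subseteq\omega$; Corollary~\ref{arbitrary} then produces a consistent circuit decomposition $\omega=\lambda+\gamma_1+\dots+\gamma_n$ with all summands positive circuits. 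From here the argument mirrors Lemma~\ref{Face consistent}: using $x\in F\subseteq F_{\mu^{(e)}}$ for each $e$, the additivity of $S(\cdot)$ on positive flows, and the consistent decomposition, one gets $2\inner{x,\omega}=S(\omega)=S(\lambda)+\sum_iS(\gamma_i)$; on the other hand $x\in V_0$ forces $2\inner{x,\gamma_i}\le S(\gamma_i)$ for all $i$ and $2\inner{x,\lambda}\le\norm{\lambda}^2=S(\lambda)$. Expanding $2\inner{x,\omega}=2\inner{x,\lambda}+\sum_i2\inner{x,\gamma_i}$ and comparing then pins down $2\inner{x,\lambda}=\norm{\lambda}^2$, i.e.\ $x\in F_\lambda$. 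As $x\in F$ was arbitrary, $F\subseteq F_\lambda$, which is $\lambda\in\mathcal{U}(F)$.

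The step I expect to be the real obstacle is the construction of $\omega$: it must be assembled from circuits lying in $\mathcal{U}(F)$ (so that $x$ is tight against it through the equalities $2\inner{x,\mu^{(e)}}=\norm{\mu^{(e)}}^2$) and at the same time must dominate $\lambda$ in the consistency order (so that Corollary~\ref{arbitrary} can split $\lambda$ off). This is precisely where the hypothesis $(\mathcal{N}_\lambda,D_\lambda)\ge\phi(F)$ is consumed: its support part guarantees that the $\mu^{(e)}$ exist, and its sign part guarantees both that the reorientation making all the $\mu$'s positive makes $\lambda$ positive too, and that $\omega$ then dominates $\lambda$ coordinatewise. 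Once $\omega$ is in hand, the remainder is bookkeeping with $S(\cdot)$ and the Voronoi inequalities already established in Lemmas~\ref{Eulerian}--\ref{Face consistent}.
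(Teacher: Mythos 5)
Your proof is correct and follows essentially the same route as the paper's: the only deviation is that you take the auxiliary flow to be $\omega=\sum_{e\in\supp(\lambda)}\mu^{(e)}$ (one chosen circuit of $\mathcal{U}(F)$ per edge of $\lambda$), whereas the paper takes $\mu=\sum_{\gamma\in\mathcal{U}(F)}\gamma$, the sum over all of $\mathcal{U}(F)$. Your choice has the small advantage of making the containment $\lambda\subseteq\omega$ manifest before invoking Corollary~\ref{arbitrary} (a step the paper leaves implicit); otherwise the squeeze between the tightness equalities $2\inner{x,\gamma}=S(\gamma)$ and the Voronoi inequalities is identical.
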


\begin{proof}
The only if direction follows from the definition of $\phi(F)$ and the partial ordering of $\mathcal{SC}(\mathcal{M})$.
For the if implication, let $\lambda$ be a circuit such that $(\mathcal{N}_\lambda, D_\lambda)\geq \phi(F)$; we aim to prove that $\lambda \in \mathcal{U}(F).$

Define the flow $\mu := \displaystyle\sum_{\gamma \in \mathcal{U}(F)}\gamma$. Note that this is expression a consistent circuit decomposition as all flows $\gamma$ in the sum are consistent due to Lemma \ref{Face consistent}. So we can write
$$D_\mu = \displaystyle\bigcup\limits_{\gamma \in \mathcal{U}(F)} D_\gamma, \qquad \supp(\mu) = \displaystyle\bigcup\limits_{\gamma \in \mathcal{U}(F)}\supp(\gamma)$$
In other words, $\phi(F)$ is the strongly connected orientation induced by the flow $\mu$.

Choose another consistent circuit decomposition of $\mu = \sum_{i=1}^n \gamma_i$, this time such that $\gamma_1 = \lambda$, which exists due to Corollary \ref{arbitrary}.

Let $x \in F$. As $F \subseteq F_\gamma$ for all $\gamma \in \mathcal{U}(F)$, we have $2\inner{x, \gamma} = \norm{\gamma}^2 = S(\gamma)$. We have
\begin{align*}
2\inner{x, \mu} \; =\; \sum_{\gamma\in\mathcal{U}(F)}2\inner{x, \gamma} \; =\; \sum_{\gamma\in\mathcal{U}(F)} S(\gamma) \; =\; S(\mu)
\end{align*}
However, as $x\in F \subseteq V_0$, we have $\norm{x}^2 \leq \norm{x - \gamma_i}^2$ for all $i = 1, 2, \cdots, n$. This is equivalent to
$$S(\gamma_i) = \norm{\gamma_i}^2 \geq 2\inner{x, \gamma_i}$$
Putting these together,
\begin{align*}
S(\mu) = 2\inner{x, \mu} &= 2\sum_{i=1}^n \inner{x, \gamma_i} \leq \sum_{i=1}^n S(\gamma_i) = S(\mu)
\end{align*}
Thus, the inequality in the middle is an equality:
$$2\sum_{i=1}^n \inner{x, \gamma} = \sum_{i=1}^n S(\gamma_i)$$

Therefore, each inequality $2\inner{ x, \gamma_i} \leq \norm{\gamma_i}^2$ is an equality. In particular, for $i=1$, 
$2\inner{x, \lambda} = \norm{\lambda}^2.$
Hence, $x \in F_\lambda$, so $F \subseteq F_\lambda$, so $\lambda \in \mathcal{U}(F)$.
\end{proof}

\begin{corollary}\label{cor:inj} \cite[Lemma 8]{Amini}
$\phi$ is injective and order-preserving.
\end{corollary}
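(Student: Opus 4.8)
The plan is to prove Corollary~\ref{cor:inj} using Lemma~\ref{faceiff} as the key tool, which translates the containment of faces into a purely combinatorial statement about the poset $\mathcal{SC}(\mathcal M)$. The central observation is that a face $F$ of $V_0$ is cut out precisely by the hyperplanes $F_\gamma$ that contain it: since $V_0$ is a polytope whose facets are (by Lemma~\ref{codimension} and the discussion preceding it) among the hyperplanes $F_\gamma$ for circuits $\gamma \in \Xi$, every face $F$ equals $V_0 \cap \bigcap_{\gamma \in \mathcal{U}(F)} F_\gamma$, where $\mathcal{U}(F)$ is the set of circuits whose hyperplane contains $F$. Consequently, a face is completely determined by the set $\mathcal{U}(F)$, and two faces $F, F'$ satisfy $F \subseteq F'$ if and only if $\mathcal{U}(F') \subseteq \mathcal{U}(F)$.

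\medskip

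With this in hand, order-preservation is immediate: if $F \subseteq F'$ in $\mathcal{FP}(\mathcal M)$, then $\mathcal{U}(F') \subseteq \mathcal{U}(F)$, so $\bigcup_{\lambda \in \mathcal{U}(F')}\mathcal N_\lambda$ is a sub-matroid of $\bigcup_{\lambda \in \mathcal{U}(F)}\mathcal N_\lambda$, and the orientations agree on the smaller base set since both are induced by the flows $D_\lambda$; hence $\phi(F) \geq \phi(F')$, matching the convention that the partial order on $\mathcal{SC}(\mathcal M)$ is reversed (larger subgraph is smaller in the poset). For injectivity, suppose $\phi(F) = \phi(F')$. By Lemma~\ref{faceiff}, a circuit $\lambda$ lies in $\mathcal{U}(F)$ if and only if $(\mathcal N_\lambda, D_\lambda) \geq \phi(F)$; since $\phi(F) = \phi(F')$, this holds if and only if $(\mathcal N_\lambda, D_\lambda) \geq \phi(F')$, which in turn holds if and only if $\lambda \in \mathcal{U}(F')$. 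Therefore $\mathcal{U}(F) = \mathcal{U}(F')$, and since each face is determined by its defining hyperplanes, $F = F'$.

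\medskip

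The main obstacle — really the only nontrivial point — is justifying that a face $F$ of the Voronoi polytope $V_0$ is recovered as the intersection of $V_0$ with exactly those hyperplanes $F_\gamma$ containing it, i.e. that the $F_\gamma$ supply all the facet-defining hyperplanes of $V_0$. This follows from Voronoi-cell theory: the facets of $V_0$ are contained in the bisecting hyperplanes $\{x : \norm{x - \mu} = \norm{x}\}$ for the relevant "relevant vectors" $\mu \in \Lambda$, and Lemma~\ref{Eulerian} together with Lemma~\ref{codimension} identifies the codimension-one such faces as exactly those coming from circuits $\gamma \in \Xi$, with bisecting hyperplane $F_\gamma$. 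Since every proper face of a polytope is the intersection of the facets containing it, the displayed description of $F$ follows. Once this is granted, order-preservation and injectivity are formal consequences of Lemma~\ref{faceiff} as sketched above, so I would keep the write-up short, citing \cite[Lemma 8]{Amini} for the argument that transfers verbatim.
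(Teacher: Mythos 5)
Your approach is essentially the same as the paper's: for injectivity, both you and the paper deduce $\mathcal{U}(F) = \mathcal{U}(F')$ from $\phi(F) = \phi(F')$ via Lemma~\ref{faceiff}, then invoke the fact that a face of $V_0$ is the intersection of the codimension-one faces containing it, so $\mathcal{U}$ determines the face; for order-preservation, both pass through $\mathcal{U}(F') \subseteq \mathcal{U}(F)$ and compare the resulting sub-matroids. You go further than the paper in explicitly justifying the key fact that every face equals $V_0 \cap \bigcap_{\gamma \in \mathcal{U}(F)} F_\gamma$, citing Voronoi-cell theory together with Lemmas~\ref{Eulerian} and~\ref{codimension} to identify the facets with the $F_\gamma$; the paper simply asserts this, so your elaboration is a welcome addition.

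One slip in the order-preservation step: starting from $F \subseteq F'$ you correctly obtain $\mathcal{U}(F') \subseteq \mathcal{U}(F)$ and that $\bigcup_{\lambda \in \mathcal{U}(F')} \mathcal{N}_\lambda$ is a sub-matroid of $\bigcup_{\lambda \in \mathcal{U}(F)} \mathcal{N}_\lambda$; but the ordering on $\mathcal{SC}(\mathcal{M})$ declares the \emph{smaller} matroid to be the \emph{larger} element of the poset, so this gives $\phi(F) \leq \phi(F')$, not $\phi(F) \geq \phi(F')$ as you wrote. The two reversals (one from faces to $\mathcal{U}$, one built into the $\mathcal{SC}$ ordering) cancel, yielding genuine order-preservation; your final inequality as written contradicts both the paper's conclusion and your own parenthetical remark that ``larger subgraph is smaller in the poset.'' Only the direction of that last symbol needs correcting; the surrounding reasoning is sound.
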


\begin{proof}
Let $F_1, F_2 \in \mathcal{FP}$ such that $\phi(F_1) = \phi(F_2)$. Let $\lambda \in \mathcal{U}(F_1)$. By Lemma \ref{faceiff}, this is if and only if $(\mathcal{N}_\lambda, D_\lambda) \geq \phi(F_1) = \phi(F_2)$. Again by Lemma \ref{faceiff}, this is if and only if $\lambda \in \mathcal{U}(F_2)$. Thus, $\mathcal{U}(F_1) = \mathcal{U}(F_2)$. Any face $F$ of $V_0$ is obtained as the intersection of the one-codimension faces which contain F; therefore, we conclude that $F_1 = F_2$, and $\phi$ is injective.

Now let $F_1 \subseteq F_2$ be two faces in $\mathcal{FP}$. By definition, $\mathcal{U}(F_2) \subseteq \mathcal{U}(F_1)$. This implies
$$\bigcup\limits_{\lambda\in\mathcal{U}(F_2)}\supp(\lambda) \subseteq \bigcup\limits_{\lambda\in\mathcal{U}(F_1)}\supp(\lambda), \qquad \qquad \bigcup\limits_{\lambda\in\mathcal{U}(F_2)}D_\lambda \subseteq \bigcup\limits_{\lambda\in\mathcal{U}(F_1)}D_\lambda$$
Thus, by the definition of ordering on $\mathcal{SC}$, $\phi(F_1) \leq \phi(F_2)$.
\end{proof}
\begin{lemma}\label{flowmatroid}
Any element of $\mathcal{SC}$ can be represented as $(\mathcal{N}_\mu, D_\mu)$ for some $\mu \in \Lambda$.
\end{lemma}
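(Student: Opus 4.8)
The plan is to read $\mu$ off directly from the definition of strong connectivity, with essentially no work beyond bookkeeping. Given $(\mathcal{N}, D_\mathcal{N}) \in \mathcal{SC}(\mathcal{M})$, write $B := E(\mathcal{N}) \subseteq E$ for the base set of $\mathcal{N}$. I would first reorient: replace $M$ by the column-sign-twisted matrix representing $\mathcal{M}$ in which $D_\mathcal{N}$ becomes the all-positive orientation of $B$. This is harmless --- it is an isometry of $\mathbb{R}^E$ carrying $\Lambda(\mathcal{M})$ onto $\Lambda(\mathcal{M})$, exactly as in the orientation-independence remarks already made for $\mathcal{F}$ and $\Lambda$ --- and at the end, untwisting the signs turns an all-positive flow supported on $B$ into a flow $\mu$ with $\sgn(\mu_e) = D_\mathcal{N}(e)$ for $e \in B$ and $\mu_e = 0$ otherwise. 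So it is enough to treat the case where $D_\mathcal{N}$ is all-positive.

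In that case, strong connectivity of $(\mathcal{N}, D_\mathcal{N})$ says precisely that for every $e \in B$ there is $z_e \in (\mathbb{Z}_{\geq 0})^{B}$ with $\gamma_e := e + z_e \in \Lambda(\mathcal{N}) \subseteq \Lambda(\mathcal{M})$; each $\gamma_e$ is then a nonnegative integer flow with $\supp(\gamma_e) \subseteq B$ and $(\gamma_e)_e \geq 1$. I would set $\mu := \sum_{e \in B}\gamma_e \in \Lambda(\mathcal{M})$. Because every summand has all coordinates $\geq 0$, there is no cancellation: $\mu_f = \sum_{e \in B}(\gamma_e)_f \geq 0$ for all $f \in E$, with $\mu_f > 0$ exactly when some $(\gamma_e)_f > 0$. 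Hence $\supp(\mu) = \bigcup_{e \in B}\supp(\gamma_e)$, which is contained in $B$ and contains each $e \in B$ since $(\gamma_e)_e \geq 1$; so $\supp(\mu) = B$. Since a sub-matroid of $\mathcal{M}$ is determined by its base set, this gives $\mathcal{N}_\mu = \mathcal{N}$, while $D_\mu$ is the all-positive orientation, i.e. $D_\mu = D_\mathcal{N}$.

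Finally, undoing the reorientation made above yields $\mu \in \Lambda(\mathcal{M})$ for the original $M$ with $(\mathcal{N}_\mu, D_\mu) = (\mathcal{N}, D_\mathcal{N})$, as required. I do not expect any genuine obstacle here: the one point needing care is that summing the positive cycles $\gamma_e$ neither enlarges the support beyond $B$ (each $\gamma_e$ lies in $\Lambda(\mathcal{N})$) nor shrinks it (nonnegativity forbids cancellation), and the reorientation step is the same routine move already invoked for orientation-independence.
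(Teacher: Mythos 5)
Your proposal is correct and is essentially the same argument as the paper's: in both, one sums nonnegative flows supported inside $E(\mathcal{N})$ whose supports together cover $E(\mathcal{N})$, and notes that consistency (nonnegativity after your sign twist) rules out cancellation, so $\supp(\mu) = E(\mathcal{N})$ and $D_\mu = D_\mathcal{N}$. The only cosmetic differences are that the paper sums all circuits $\gamma$ with $(\mathcal{N}_\gamma, D_\gamma) \geq (\mathcal{N}, D)$ (a canonical, choice-free sum) rather than one witness $\gamma_e$ per edge, and that the paper works directly with the orientation $D$ instead of reorienting to make it all-positive --- both are immaterial.
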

\begin{proof}
For an element $(\mathcal{N}, D) \in \mathcal{SC}$, consider the sum of all circuits which concur with the orientation $D$ and use edges in $E(\mathcal{N})$. That is,
$$\mu = \displaystyle\sum_{\substack{\gamma \in \Xi\\ (\mathcal{N}_\gamma, D_\gamma) \geq (\mathcal{N}, D)}}\gamma.$$

As $D$ is a strongly connected orientation $\mathcal{N}$, for every $e \in E(\mathcal{N})$, there exists a circuit $\gamma$ such that $(\mathcal{N}_\gamma, D_\gamma) \geq (\mathcal{N}, D)$ and $e \in \supp(\gamma)$. As each of these circuits are mutually consistent in their orientations, their sum is a flow with the orientation $D$ and ground set $E(\mathcal{N})$. Thus $(\mathcal{N}, D) = (\mathcal{N}_\mu, D_\mu)$ for the flow $\mu$.
\end{proof}

\begin{definition}\label{face}
For any $\mu \in \Lambda$ (not necessarily a circuit), let
$$F_\mu = \bigcap\limits_{\substack{\gamma \in \Xi\\(\mathcal{N}_\gamma, D_\gamma) \geq (\mathcal{N}_\mu, D_\mu)}}F_\gamma.$$
\end{definition}

We will show that $\phi$ is surjective by showing that $\phi(F_\mu) = (\mathcal{N}_\mu, D_\mu)$.

\begin{lemma}\label{vertex} \cite[Lemma 9]{Amini}
Let $(\mathcal{N}_\mu, D_\mu) \in \mathcal{SC}$ be such that $\genus(\mu)=\genus(\mathcal{M})$. Then $F_\mu$ consists of a single vertex of $V_0$, and all vertices of $V_0$ are of this form.
\end{lemma}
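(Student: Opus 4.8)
The plan is to derive both assertions from the circuit basis of Theorem~\ref{basis} and the consistency statement of Lemma~\ref{Face consistent}, after one preliminary. Write $g=\genus(\mathcal{M})$ and, for $\mu\in\Lambda$, let $\hat\mu\in\R^E$ be the sign vector $\hat\mu_e=\sgn(\mu_e)$. I will use throughout that $V_0$ is exactly the polyhedron cut out by the circuit inequalities,
\[
V_0=\{x\in\calF : 2\inner{x,\gamma}\le\norm{\gamma}^2\ \text{for all}\ \gamma\in\Xi\}.
\]
Indeed, for $\xi\in\Lambda$ take a consistent circuit decomposition $\xi=\sum_i\gamma_i$ (Theorem~\ref{consistent}); any $x$ meeting all circuit inequalities then satisfies $2\inner{x,\xi}=\sum_i 2\inner{x,\gamma_i}\le\sum_i\norm{\gamma_i}^2=\sum_i|\supp(\gamma_i)|=\sum_e|\xi_e|\le\sum_e\xi_e^2=\norm{\xi}^2$, the last step because $\xi\in\Z^E$. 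Consequently the minimal face of $V_0$ through a point $v$ is the locus on which the circuits active at $v$ attain equality, and $v$ is a vertex precisely when those active circuits span $\calF$. (I also use $\gamma\subseteq\mu$ as shorthand for $(\mathcal{N}_\gamma,D_\gamma)\ge(\mathcal{N}_\mu,D_\mu)$: for a circuit $\gamma$ these are equivalent.)

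For the first direction, suppose $\genus(\mu)=g$. Then the zero-extension inclusion $\ker(M_\mu)\hookrightarrow\ker(M)=\calF$ is an equality of subspaces of $\R^E$, so every flow --- hence every circuit in $\Xi$ --- is supported inside $\supp(\mu)$. By Theorem~\ref{basis} choose a basis $\alpha_1,\dots,\alpha_g$ of $\calF$ of circuits with $\alpha_i\subseteq\mu$. Each appears in the intersection defining $F_\mu$ (Definition~\ref{face}), so $F_\mu\subseteq F_{\alpha_1}\cap\dots\cap F_{\alpha_g}$, and since the normals $\alpha_i$ are a basis of $\calF$ this is a single point $v$; thus $F_\mu$ is $\emptyset$ or $\{v\}$. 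The crux is that $2v-\hat\mu\perp\calF$: write $2v-\hat\mu=p+q$ with $p\in\calF$, $q\perp\calF$; then, using $\alpha_i\subseteq\mu$ and that $\alpha_i$ is Eulerian,
\[
\inner{\alpha_i,p}=\inner{\alpha_i,2v-\hat\mu}=2\inner{v,\alpha_i}-\inner{\alpha_i,\hat\mu}=\norm{\alpha_i}^2-|\supp(\alpha_i)|=0,
\]
and since the $\alpha_i$ span $\calF\ni p$ we get $p=0$. Therefore, for every circuit $\gamma\in\Xi$ (all supported inside $\supp(\mu)$),
\[
2\inner{v,\gamma}=\inner{\gamma,\hat\mu}=\sum_{e\in\supp(\gamma)}\gamma_e\,\sgn(\mu_e)\ \le\ |\supp(\gamma)|=\norm{\gamma}^2,
\]
with equality exactly when $\gamma$ agrees in sign with $\mu$ on $\supp(\gamma)$, i.e.\ when $\gamma\subseteq\mu$. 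Hence $v\in V_0$; the circuits active at $v$ are precisely the $\gamma\subseteq\mu$, so they include the spanning set $\alpha_1,\dots,\alpha_g$ and $v$ is a vertex; and their common equality locus is $F_\mu$, so $F_\mu=\{v\}$.

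For the second direction, let $v$ be a vertex of $V_0$ (the case $g=0$ is trivial, so take $g\ge1$) and put $G_v=\{\gamma\in\Xi:v\in F_\gamma\}$. As $v$ is a vertex the normals in $G_v$ span $\calF$, so $\bigcap_{\gamma\in G_v}F_\gamma=\{v\}$. By Lemma~\ref{Face consistent} the circuits in $G_v$ are pairwise consistent, hence $\mu:=\sum_{\gamma\in G_v}\gamma$ is a genuine flow in $\Lambda$ with $\supp(\mu)=\bigcup_{\gamma\in G_v}\supp(\gamma)$ and sign vector extending that of every $\gamma\in G_v$; moreover $(\mathcal{N}_\mu,D_\mu)\in\mathcal{SC}$ since $\mu$ is a flow. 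I claim $\{\gamma\in\Xi:\gamma\subseteq\mu\}=G_v$. The inclusion $\supseteq$ is immediate from pairwise consistency and $\supp(\gamma)\subseteq\supp(\mu)$. For $\subseteq$, let $\delta\subseteq\mu$ be a circuit and take a consistent circuit decomposition $\mu=\delta+\sum_j\delta_j$ (Corollary~\ref{arbitrary}); pairwise consistency of the $\gamma\in G_v$ gives $2\inner{v,\mu}=\sum_{\gamma\in G_v}2\inner{v,\gamma}=\sum_{\gamma\in G_v}|\supp(\gamma)|=\sum_e|\mu_e|$, and combining with $2\inner{v,\delta}\le|\supp(\delta)|$, $2\inner{v,\delta_j}\le|\supp(\delta_j)|$ (valid as $\delta,\delta_j\in\Lambda$, $v\in V_0$) and additivity of $\sum_e|\cdot|$ along the consistent decomposition forces $2\inner{v,\delta}=|\supp(\delta)|=\norm{\delta}^2$, i.e.\ $\delta\in G_v$. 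Thus $F_\mu=\bigcap_{\gamma\in G_v}F_\gamma=\{v\}$. Finally each $\gamma\in G_v$ lies in $\ker(M_\mu)$, and these circuits span $\calF$, so $\ker(M_\mu)=\calF$ and $\genus(\mu)=g$.

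The step I expect to be the main obstacle is the first direction: one must recognize the point $v$ cut out by a circuit basis as a point of the Voronoi cell whose active hyperplanes are exactly the $F_\gamma$ with $\gamma\subseteq\mu$, and the orthogonality identity $2v-\hat\mu\perp\calF$ is precisely what delivers both facts at once. A secondary point needing care is the preliminary that $V_0$ is cut out by the circuit inequalities, which is what makes ``the active circuits span $\calF$'' the correct vertex criterion.
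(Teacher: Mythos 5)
Your proof is correct and follows the same overall strategy as the paper: use Theorem~\ref{basis} to get a circuit basis consistent with $\mu$, use it to pin down $F_\mu$ as a point, verify membership in $V_0$ via the circuit inequalities, and for the converse build $\mu$ from the circuits active at a vertex. A few of your refinements are worth noting. First, your preliminary reformulation of $V_0$ as $\{x\in\calF:2\inner{x,\gamma}\le\norm{\gamma}^2\ \forall\,\gamma\in\Xi\}$ is implicit in the paper but useful to have stated once, since it replaces a quantifier over all of $\Lambda$ by one over circuits. Second, your orthogonality identity $2v-\hat\mu\perp\calF$ cleanly delivers $2\inner{v,\gamma}=\inner{\gamma,\hat\mu}\le\norm{\gamma}^2$ for \emph{every} circuit $\gamma$, with equality exactly when $\gamma\subseteq\mu$; the paper instead expands $\lambda=\sum_\gamma a_\gamma\gamma$ and asserts $\sum_\gamma a_\gamma\norm{\gamma}^2=\sum_e\abs{\lambda_e}$, but unwinding the definitions gives $\sum_\gamma a_\gamma\norm{\gamma}^2=\sum_e\sgn(\mu_e)\lambda_e$, which can be negative (e.g.\ $\lambda=-\mu$ on a triangle), so the claimed equality should be an inequality $\le\sum_e\abs{\lambda_e}$; the final conclusion $2\inner{v,\lambda}\le\norm{\lambda}^2$ is unaffected, but your route avoids this slip. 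Third, in the converse direction you explicitly observe that the circuits in $G_v$ span $\calF$ and lie in $\ker(M_\mu)$, which verifies $\genus(\mu)=g$; the paper establishes $\{v\}=F_\mu$ but leaves this last check (needed for ``all vertices of $V_0$ are of this form'') implicit in the phrase ``$g$ faces of codimension one.'' In short: same method, tighter execution.
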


\begin{proof}
As $F_\mu$ is the intersection of $g:=\genus(\mathcal{M})$ independent hyperplanes, it is contained in an affine plane of codimension $g$. Thus, $F_\mu$ has dimension zero and consists of a single point.

Since $\genus(\mu) = \genus(\mathcal{M})$, $\ker(M_\mu) = \mathcal{F}$. By Theorem \ref{basis}, there exists a basis $\Gamma$  for $\mathcal{F}$, consisting of circuits $\gamma \subseteq \mu$. By Definition \ref{face}, $F_\mu \subseteq F_\gamma$ for all $\gamma \in \Gamma$. Thus, for all $\gamma \in \Gamma$, we have that $2\inner{F_\mu, \gamma} = \norm{\gamma}^2$.

For any $\lambda \in \Lambda$, consider the decomposition
$$\lambda = \sum_{\gamma\in \Gamma} a_\gamma\gamma,\quad \text{ where } a_\gamma \in \mathbb{R}, \gamma \in \Gamma.$$

Thus,
\begin{align*}
2\inner{F_\mu, \lambda} = \sum_{\gamma \in \Gamma} 2a_\gamma \inner{F_\mu, \gamma} &= \sum_{\gamma \in \Gamma} a_\gamma \norm{\gamma}^2 \quad \leq \quad \sum_{e \in \supp(\lambda)} \abs{\lambda_e} = S(\lambda) \leq \norm{\lambda}^2
\end{align*}

Comparing the left and the right, we obtain
$2 \inner{F_\mu, \lambda}\leq \norm{\lambda}^2$, or equivalently, $\norm{F_\mu} \leq \norm{F_\mu-\lambda}$. As $\lambda \in \Lambda$ was arbitrary, this means $F_\mu \in V_0$, hence $F_\mu$ is a vertex of $V_0$.

It remains to show that all vertices of $V_0$ are of this form. Let $v$ be an arbitrary vertex of  $V_0$. Consider $\phi(\{v\}) = (\mathcal{N}, D)$. By Lemma \ref{flowmatroid}, there exists $\mu \in \Lambda$ such that $(\mathcal{N}, D) = (\mathcal{N}_\mu, D_\mu)$. Then for every $\gamma \in \Xi$ such that $(\mathcal{N}_\gamma, D_\gamma) \geq (\mathcal{N}_\mu, D_\mu) = \phi(\{v\})$,  Lemma~\ref{faceiff} implies that $\gamma \in \mathcal{U}(F)$, that is, $\{v\} \subseteq F_\gamma$. Thus,
$$\{v\} \subseteq \bigcap\limits_{\substack{\gamma \in \Xi\\(\mathcal{N}_\gamma, D_\gamma) \geq (\mathcal{N}_\mu, D_\mu)}}F_\gamma \quad = \quad F_\mu.$$
Furthermore, since $v$ has codimension $g$, there exists $g$ faces of $V_0$ of codimension one which intersect at $v$. By Lemma \ref{Face consistent}, these faces are given by $g$ consistent circuits $\alpha_1, \alpha_2, \cdots, \alpha_g$. This is equivalent to
$$\{v\} = \bigcap_{k=1}^g F_{\alpha_k}$$
Thus we have $\alpha_k \in \mathcal{U}(F)$ for $k = 1, 2, \cdots, g$. By Lemma \ref{faceiff}, we have $(\mathcal{N}_{\alpha_k}, D_{\alpha_k}) \geq (\mathcal{N}_\mu, D_\mu) = \phi(\{v\})$, and
$$F_\mu \subseteq \bigcap_{k=1}^g F_{\alpha_k} = \{v\}.$$
In conclusion, $\{v\} = F_\mu$ as claimed.
\end{proof}

In light of Lemma~\ref{vertex}, for $\lambda\in \Lambda$ such that $\genus(\lambda)=\genus(\mathcal{M})$, we denote $F_\lambda$ -- which consists of a single vertex -- by $v_\lambda$.

\begin{lemma}\label{lem:surj} \cite[Lemma 10]{Amini}
Let $(\mathcal{N}_\mu, D_\mu) \in \mathcal{SC}$. The convex hull of the set of vertices
$$S_\mu=\{v_\lambda\ |\ \lambda \in \Lambda,\ \genus(\lambda) = \genus(\mathcal{M}),\ (\mathcal{N}_\mu, D_\mu) \geq (\mathcal{N}_\lambda, D_\lambda)\}$$
defines a face $F$ of $V_0$ with the property $\phi(F) = (\mathcal{N}_\mu, D_\mu)$. Hence, $\phi$ is surjective.
\end{lemma}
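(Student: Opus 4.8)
The plan is to show that the convex hull $F$ of the vertex set $S_\mu$ is genuinely a face of $V_0$, and then to compute $\phi(F)$ directly using Lemma~\ref{faceiff}. First I would observe that $S_\mu$ is nonempty: since $D_\mu$ is a strongly connected orientation of $\mathcal N_\mu$, one can extend the basis of $\ker(M_\mu)$ given by circuits $\gamma\subseteq\mu$ (Theorem~\ref{basis}) to a full circuit basis $\alpha_1,\dots,\alpha_g$ of $\mathcal F$ using circuits of $\mathcal M$; these need not be consistent with $\mu$, but after reorienting we can build some $\lambda\in\Lambda$ with $\genus(\lambda)=\genus(\mathcal M)$ and $(\mathcal N_\mu,D_\mu)\geq(\mathcal N_\lambda,D_\lambda)$. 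More carefully, I would argue that for \emph{every} vertex $v_\lambda\in S_\mu$ and every circuit $\gamma\subseteq\mu$ we have $v_\lambda\in F_\gamma$: indeed $(\mathcal N_\mu,D_\mu)\geq(\mathcal N_\lambda,D_\lambda)$ means $\supp(\lambda)\subseteq\supp(\mu)$ with matching signs, so any circuit $\gamma\subseteq\lambda$ also satisfies $\gamma\subseteq\mu$; running the computation of Lemma~\ref{vertex} shows $2\inner{v_\lambda,\gamma}=\norm{\gamma}^2$ for all $\gamma\subseteq\mu$, because the circuit basis from Theorem~\ref{basis} for $\ker(M_\lambda)$ sits inside the set of circuits $\subseteq\mu$. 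Hence every $v_\lambda\in S_\mu$ lies in the affine subspace $A:=\bigcap_{\gamma\subseteq\mu, \gamma\in\Xi}F_\gamma$, which has codimension $\genus(\mu)$, and so $F=\mathrm{conv}(S_\mu)\subseteq A\cap V_0$.

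Next I would identify $F$ as a face. The face of $V_0$ of smallest dimension containing a point $x$ is determined by the set of hyperplanes $F_\gamma$ (and lattice-inequality faces) that are tight at $x$; so I would take the face $F'$ of $V_0$ spanned by $S_\mu$ — concretely, $F'$ is the smallest face containing all of $S_\mu$, equivalently the intersection of all codimension-one faces containing $S_\mu$. Since vertices of $V_0$ are precisely the $v_\lambda$ with $\genus(\lambda)=\genus(\mathcal M)$ (Lemma~\ref{vertex}), and the vertices of $F'$ are among these, I want to show the vertex set of $F'$ is exactly $S_\mu$. The inclusion $S_\mu\subseteq\mathrm{vert}(F')$ is by construction; for the reverse, if $v_\lambda$ is a vertex of $F'$ then $v_\lambda\in A$, so $v_\lambda\in F_\gamma$ for all circuits $\gamma\subseteq\mu$, and by Lemma~\ref{faceiff} applied to each such $\gamma$ we get $(\mathcal N_\gamma,D_\gamma)\geq\phi(\{v_\lambda\})=(\mathcal N_\lambda,D_\lambda)$; taking the union over all $\gamma\subseteq\mu$ and using that $D_\mu$ is strongly connected (so $\supp(\mu)=\bigcup_{\gamma\subseteq\mu}\supp(\gamma)$ with consistent orientations) yields $(\mathcal N_\mu,D_\mu)\geq(\mathcal N_\lambda,D_\lambda)$, i.e. $v_\lambda\in S_\mu$. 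Thus $F=F'$ is a face of $V_0$.

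Finally, to compute $\phi(F)$: by definition $\phi(F)=\bigl(\bigcup_{\gamma\in\mathcal U(F)}\mathcal N_\gamma,\ \bigcup_{\gamma\in\mathcal U(F)}D_\gamma\bigr)$ where $\mathcal U(F)=\{\gamma\in\Xi\mid F\subseteq F_\gamma\}$. On one hand, every circuit $\gamma\subseteq\mu$ has $F\subseteq A\subseteq F_\gamma$, so $\{\gamma\in\Xi:(\mathcal N_\gamma,D_\gamma)\geq(\mathcal N_\mu,D_\mu)\}\subseteq\mathcal U(F)$, giving $\phi(F)\leq(\mathcal N_\mu,D_\mu)$ by taking unions (and using strong connectivity of $D_\mu$ as in Lemma~\ref{flowmatroid}, so the union recovers all of $\mathcal N_\mu$). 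On the other hand, if $\gamma\in\mathcal U(F)$ then $F\subseteq F_\gamma$; picking any vertex $v_\lambda\in S_\mu\subseteq F$ gives $v_\lambda\in F_\gamma$, so $\gamma\in\mathcal U(\{v_\lambda\})$ and Lemma~\ref{faceiff} gives $(\mathcal N_\gamma,D_\gamma)\geq\phi(\{v_\lambda\})=(\mathcal N_\lambda,D_\lambda)$. This alone only bounds $\gamma$ relative to individual $\lambda$; to conclude $(\mathcal N_\gamma,D_\gamma)\geq(\mathcal N_\mu,D_\mu)$ I would instead use the consistency Lemma~\ref{Face consistent}: all circuits in $\mathcal U(F)$ are pairwise consistent (they share the point $v_\lambda\in V_0$), so $\bigcup_{\gamma\in\mathcal U(F)}D_\gamma$ is a well-defined strongly connected orientation, and $\phi(F)$ is the strongly connected orientation induced by $\nu:=\sum_{\gamma\in\mathcal U(F)}\gamma$; the equality $\phi(F)=(\mathcal N_\mu,D_\mu)$ then follows from the two opposite inequalities together with the fact that $\phi$ is order-preserving and injective (Corollary~\ref{cor:inj}) and that $(\mathcal N_\mu,D_\mu)$ was realized by Lemma~\ref{flowmatroid}. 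The main obstacle I anticipate is the reverse inclusion $\mathcal U(F)\subseteq\{\gamma:(\mathcal N_\gamma,D_\gamma)\geq(\mathcal N_\mu,D_\mu)\}$: one must rule out extra circuits $\gamma$ whose hyperplane happens to contain $F$ but whose support or orientation escapes $\mathcal N_\mu$. I expect this to follow by applying Lemma~\ref{faceiff} at a vertex $v_\lambda$ with $\genus(\lambda)=\genus(\mathcal M)$ chosen so that $(\mathcal N_\lambda,D_\lambda)$ is "close to" $(\mathcal N_\mu,D_\mu)$ — ideally so that the only circuits dominating $(\mathcal N_\lambda,D_\lambda)$ are those already dominating $(\mathcal N_\mu,D_\mu)$ — which is exactly the kind of genericity that Lemma~\ref{vertex} and the grading (Remark~\ref{rmk:Graded}) make available.
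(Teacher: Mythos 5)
Your overall strategy matches the paper's: identify the convex hull of $S_\mu$ as $F_\mu\cap V_0$ (you call it the smallest face containing $S_\mu$, which comes to the same thing) and then compute $\phi$ using Lemma~\ref{faceiff} and Lemma~\ref{flowmatroid}. One local error to fix: you write that $(\mathcal N_\mu,D_\mu)\geq(\mathcal N_\lambda,D_\lambda)$ means $\supp(\lambda)\subseteq\supp(\mu)$. This is backwards. In $\mathcal{SC}$ the maximal element is $(\emptyset,\emptyset)$, so ``greater'' means \emph{smaller} support; thus $(\mathcal N_\mu,D_\mu)\geq(\mathcal N_\lambda,D_\lambda)$ means $\supp(\mu)\subseteq\supp(\lambda)$ with matching signs. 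Consequently your subsequent claim ``any circuit $\gamma\subseteq\lambda$ also satisfies $\gamma\subseteq\mu$'' is wrong, and so is ``the circuit basis from Theorem~\ref{basis} for $\ker(M_\lambda)$ sits inside the set of circuits $\subseteq\mu$.'' What you actually need, and what is true, is the reverse: every circuit $\gamma\subseteq\mu$ also satisfies $\gamma\subseteq\lambda$, hence $v_\lambda\in F_\gamma$, hence $v_\lambda\in F_\mu$. Your conclusion survives, but the logic as written does not.

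On the last step: you are right to be suspicious of the reverse inclusion $\mathcal U(F)\subseteq\{\gamma:(\mathcal N_\gamma,D_\gamma)\geq(\mathcal N_\mu,D_\mu)\}$, and you are also right that Lemma~\ref{faceiff} applied to a single vertex $v_\lambda$ does not finish it, because a full-genus $\lambda$ has support containing every circuit support, so the condition $(\mathcal N_\gamma,D_\gamma)\geq(\mathcal N_\lambda,D_\lambda)$ constrains only orientations, not supports. The paper's own proof passes over this with the phrase ``by definition of $\mathcal N_\gamma$,'' so your instinct that there is something to check here is sound; the content is precisely that for any $e_0\notin\supp(\mu)$ one must exhibit two vertices $v_{\lambda},v_{\lambda'}\in S_\mu$ whose orientations disagree on $e_0$, which forces any $\gamma\in\mathcal U(F)$ to avoid $e_0$. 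Your final paragraph gestures at this but does not prove it, and the appeals to Corollary~\ref{cor:inj}, Lemma~\ref{flowmatroid}, and ``genericity'' do not by themselves close the gap. So: same route as the paper, one reversed inequality in the middle that needs repair, and the hard direction of $\phi(F)=(\mathcal N_\mu,D_\mu)$ is identified but left open — which, to be fair, is exactly where the paper is also terse.
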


\begin{proof}
For any $\lambda \in \Lambda$ with $(\mathcal{N}_\mu, D_\mu) \geq (\mathcal{N}_\lambda, D_\lambda)$, we have  $F_\lambda \subseteq F_\mu$ by Definition \ref{face}. In particular, when $\genus(\lambda) = \genus(\mathcal{M})$, $F_\lambda = \{v_\lambda\} \subseteq F_\mu$ by Lemma~\ref{vertex}. Thus, every vertex of $F$ is contained in $F_\mu$. By the convexity of $V_0$, we have that $F \subseteq F_\mu$.

In fact, $F$ is the face of $V_0$ given by $F=F_\mu \cap V_0$. To show this, recall that $F_\mu \cap V_0$ is a face of $V_0$ for all $\mu$ because it is by definition an intersection of codimension-one faces. In addition, all vertices $v_\lambda$ of $F_\mu \cap V_0$ have the property that $\genus(\lambda) = \genus(\mathcal{M})$, and  $(\mathcal{N}_\mu, D_\mu) \geq (\mathcal{N}_\lambda, D_\lambda)$: these are exactly the vertices $v_\lambda$ included in $S_\mu$. Therefore, $F= F_\mu \cap V_0$ and $F$ is a face of $V_0$. 

It remains to show $\phi(F) = (\mathcal{N}_\mu, D_\mu)$. By definition of $\mathcal{N}_\gamma$, 
$$\mathcal{U}(F) = \{\gamma \in \Xi\ |\ F \subseteq F_\gamma\} = \{\gamma \in \Xi\ |\  (\mathcal{N}_\gamma, D_\gamma) \geq (\mathcal{N}_\mu, D_\mu)\},$$
therefore, by definition of the partial order on $\mathcal{SC}$,
$$\phi(F) = \left(\bigcup\limits_{\gamma \in \mathcal{U}(F)} \mathcal{N}_\gamma, \bigcup\limits_{\gamma\in\mathcal{U}(F)}D_\gamma\right) = (\mathcal{N}_\mu, D_\mu).$$
\end{proof}

\begin{corollary}\label{cor:genus}
For any $F \in \mathcal{FP}$, $\codim(F) = \genus(\phi(F))$.
\end{corollary}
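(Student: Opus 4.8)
The plan is to express both quantities in terms of a single flow. By Lemma~\ref{flowmatroid} we may write $\phi(F)=(\mathcal{N}_\mu,D_\mu)$ for some $\mu\in\Lambda$; since $\mathcal{N}_\mu$ has base set $\supp(\mu)$, its flow space is $\ker(M_\mu)$, so $\genus(\phi(F))=\genus(\mathcal{N}_\mu)=\genus(\mu)$. Thus it suffices to prove $\codim(F)=\genus(\mu)$, which I would do by first showing $\operatorname{aff}(F)=F_\mu$ (the affine subspace of Definition~\ref{face}) and then computing $\codim(F_\mu)=\genus(\mu)$.

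For the first step, Lemma~\ref{faceiff} identifies $\mathcal{U}(F)=\{\gamma\in\Xi\mid(\mathcal{N}_\gamma,D_\gamma)\geq(\mathcal{N}_\mu,D_\mu)\}$, which is precisely the index set in Definition~\ref{face}, so $\bigcap_{\gamma\in\mathcal{U}(F)}F_\gamma=F_\mu$. Since $F\subseteq F_\gamma$ for every $\gamma\in\mathcal{U}(F)$, the inclusion $\operatorname{aff}(F)\subseteq F_\mu$ is immediate. For the reverse inclusion I would take a relative-interior point $x_0$ of $F$: for $\gamma\in\Xi\setminus\mathcal{U}(F)$ we cannot have $2\inner{x_0,\gamma}=\norm{\gamma}^2$, since then $F_\gamma\cap V_0$ would be a face of $V_0$ meeting the relative interior of $F$ and hence containing $F$, forcing $\gamma\in\mathcal{U}(F)$; and the defining inequalities of $V_0$ coming from non-circuits are redundant relative to the circuit ones (the latter are the facet-defining inequalities, by Lemma~\ref{codimension}). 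Consequently, for any $y\in F_\mu$ and all sufficiently small $t>0$, the point $x_0+t(y-x_0)$ satisfies every defining inequality of $V_0$ and lies in every $F_\gamma$, $\gamma\in\mathcal{U}(F)$, hence lies in $F=\bigcap_{\gamma\in\mathcal{U}(F)}(F_\gamma\cap V_0)$; this forces $y\in\operatorname{aff}(F)$. (Alternatively, one may invoke the standard fact that the affine hull of a face of a full-dimensional polytope is the intersection of the hyperplanes spanning the facets that contain it.) Either way, $\codim(F)=\codim(F_\mu)$.

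For the second step, for the bound $\codim(F_\mu)\leq\genus(\mu)$: any $\gamma$ with $(\mathcal{N}_\gamma,D_\gamma)\geq(\mathcal{N}_\mu,D_\mu)$ satisfies $\supp(\gamma)\subseteq\supp(\mu)$ and is consistent with $\mu$, hence $\gamma\in\ker(M_\mu)$; so $F_\mu$ is the solution set in $\mathcal{F}$ of a linear system whose coefficient vectors all lie in the $\genus(\mu)$-dimensional subspace $\ker(M_\mu)$, and it is nonempty since $F\subseteq F_\mu$. For $\codim(F_\mu)\geq\genus(\mu)$: Theorem~\ref{basis} provides a basis $\alpha_1,\dots,\alpha_{\genus(\mu)}$ of $\ker(M_\mu)$ consisting of circuits consistent with $\mu$; each $\alpha_i$ satisfies $(\mathcal{N}_{\alpha_i},D_{\alpha_i})\geq(\mathcal{N}_\mu,D_\mu)$, so $F_\mu\subseteq F_{\alpha_i}$, and $F_\mu$ thus sits inside an intersection of $\genus(\mu)$ hyperplanes with linearly independent normals. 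Hence $\codim(F_\mu)=\genus(\mu)$, and combining with the first step, $\codim(F)=\codim(F_\mu)=\genus(\mu)=\genus(\phi(F))$.

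I expect the main obstacle to be the reverse inclusion $F_\mu\subseteq\operatorname{aff}(F)$ — equivalently, that $F$ is full-dimensional inside $F_\mu$ rather than lying in a proper affine subspace of it. The opposite inclusion and the bound $\codim(F_\mu)\geq\genus(\mu)$ are essentially free (the latter is the content of Lemma~\ref{codimension}); ruling out ``extra flatness'' of $F$ is where one must use that $\mathcal{U}(F)$ records all the facets through $F$, which is exactly the role of Lemma~\ref{faceiff} together with the fact that a face of a polytope containing a relative-interior point of another face contains that face.
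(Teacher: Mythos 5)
The paper states Corollary~\ref{cor:genus} without a written proof, treating it as a consequence of the preceding lemmas; your argument supplies the missing details in what seems to be the intended way. Your structure is sound: writing $\phi(F)=(\mathcal{N}_\mu,D_\mu)$ with $F=F_\mu\cap V_0$ (from Lemma~\ref{lem:surj}), the codimension computation for $F_\mu$ is exactly right --- the upper bound because every $\gamma$ indexing the intersection lies in $\ker(M_\lambda)=\ker(M_\mu)$, the lower bound from the circuit basis of Theorem~\ref{basis} --- and the crucial step $\operatorname{aff}(F)=F_\mu$ is handled correctly. For that step, your relative-interior argument works, but the key point that makes it go through deserves emphasis: the inequalities $2\inner{x,\lambda}\leq\norm{\lambda}^2$ for non-circuit Eulerian $\lambda$ are implied by those for circuits, which follows from the disjoint consistent circuit decomposition (Theorem~\ref{consistent}) since $\norm{\lambda}^2=\sum_i\norm{\gamma_i}^2$ and $\inner{x,\lambda}=\sum_i\inner{x,\gamma_i}$, while non-Eulerian $\lambda$ give constraints that are strict on all of $V_0$ by Lemma~\ref{Eulerian}. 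This is what licenses restricting attention to circuits when moving from a relative-interior point $x_0$.

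Two small imprecisions, neither fatal. First, you cite Lemma~\ref{codimension} both as giving that facets come from circuits and as ``the content'' of the bound $\codim(F_\mu)\geq\genus(\mu)$. Lemma~\ref{codimension} in fact gives only the one-directional implication (a codimension-one $V_\lambda\cap V_0$ forces $\lambda$ to be a circuit), which is all your argument needs; and the lower bound on $\codim(F_\mu)$ for a general, not necessarily Eulerian, $\mu$ really comes from Theorem~\ref{basis} rather than from Lemma~\ref{codimension} directly, as you correctly argue a sentence later. Second, the sweep ``for all sufficiently small $t>0$'' over the infinitely many lattice constraints is fine because $V_0$ is a polytope with finitely many facet-defining inequalities, but it is worth saying so explicitly. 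With those remarks, the proof is correct and complete.
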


{\em Proof for Theorem \ref{main}.} We defined a map $\phi:\mathcal{FP}(\mathcal{M}) \to \mathcal{SC}(\mathcal{M})$. By Corollary~\ref{cor:inj}, $\phi$ is injective and order preserving. By Lemma~\ref{lem:surj}, $\phi$ is surjective. By Corollary~\ref{cor:genus}, $\phi$ is an isomorphism of graded posets. \qed

\section{Dual Result}
Amini \cite{Amini} presents a dual result and proof for integer cuts of graphs.
In this section we extend this result to regular matroids, and show that it follows Theorem~\ref{main} via matroid duality. 
This theorem, in a more general context, later appeared in \cite{AE20} and became an important tool in describing all stable limits of a family of line bundles along a degenerating family of curves \cite{AE20-2, AE21}.

\subsection{Amini's Theorem for integer cuts} We begin by reviewing the result  \cite{Amini}  for integer cuts of graphs. Given a graph $G$ with a chosen orientation, define the {\em cut space}  $\calC(G)=\Row(M_G)$, where $M_G$ is the signed incidence matrix of $G$. The {\em lattice of integer cuts} of $G$ is defined as $\Gamma(G)=\calC(G) \cap \Z^{E(G)}$, a full-rank sub-lattice in $\calC(G)$. Both $\calC(G)$ and $\Gamma(G)$ inherit inner products from the Euclidean inner product of $\Z^{E(G)}$. Note that the isometry type of $\Gamma(G)$ does not depend on the chosen orientation of $G$. 
Let $\mathcal{FP}^c$ denote the face poset of the Voronoi cell (at zero) of  $\Gamma(G)$. 

A subgraph $H$ of $G$ is called a {\em cut subgraph} \cite{Amini} if there exists an {\em ordered} partition $V(G)=V_1\sqcup ... \sqcup V_s$ of the vertex set of $G$, such that the edges of $H$ are precisely the edges of $G$ which connect separate sets of the partition. That is, and edges within each partition set are omitted. An orientation of a cut subgraph $H$ is called {\em coherent acyclic}, if each for each edge $e=(u,v)$ in $H$, we have $u \in V_i$ and $v \in V_j$ such that $i < j$. Let $(H_1, D_1)$ and $(H_2, D_2)$ be pairs where $H_i$ are cut subgraphs, and $D_i$ is a coherent acyclic orientation for $H_i$ for $i=1,2$. In \cite{Amini} a partial order on these pairs is defined by setting  $(H_2, D_2)\leq (H_1, D_1)$ if $H_1$ is a subgraph of $H_2$ and $D_1$ is induced from $D_2$. The resulting poset is denoted $\mathcal{CAC}$.

\begin{theorem}\label{thm:cuts}\cite[Theorem 14]{Amini}
The posets $\mathcal{FP}^c$ and $\mathcal{CAC}$ are isomorphic.
\end{theorem}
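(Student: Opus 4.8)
The plan is to deduce Theorem~\ref{thm:cuts} from Theorem~\ref{main} by making the flow/cut duality explicit as matroid duality. The key observation is that the lattice of integer cuts of $G$ is the lattice of integer flows of the dual matroid $\mathcal{M}(G)^*$, so that the face poset $\mathcal{FP}^c$ equals $\mathcal{FP}(\mathcal{M}(G)^*)$; it then remains to show that the poset $\mathcal{SC}(\mathcal{M}(G)^*)$ delivered by Theorem~\ref{main} is, once the definitions are unwound, exactly $\mathcal{CAC}$.

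\emph{Step 1 (the arithmetic).} The matroid $\mathcal{M}(G)^*$ is regular \cite{Oxley}, and it has a TU representation $M^*$ in standard dual form with $\ker(M^*)=\calC(G)$. Hence $\mathcal{F}(\mathcal{M}(G)^*)=\ker(M^*)=\calC(G)$, and intersecting with $\Z^{E}$ gives $\Lambda(\mathcal{M}(G)^*)=\calC(G)\cap\Z^{E}=\Gamma(G)$; the two inner products agree, both being inherited from the Euclidean form on $\Z^{E}$. Therefore the Voronoi cell at the origin of $\Gamma(G)$ coincides with that of $\Lambda(\mathcal{M}(G)^*)$, so $\mathcal{FP}^c=\mathcal{FP}(\mathcal{M}(G)^*)$, and Theorem~\ref{main} applied to $\mathcal{M}(G)^*$ yields $\mathcal{FP}^c\cong\mathcal{SC}(\mathcal{M}(G)^*)$ as graded posets.

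\emph{Step 2 (the dictionary).} An element of $\mathcal{SC}(\mathcal{M}(G)^*)$ is a pair $(\mathcal{N},D)$ with $\mathcal{N}=\mathcal{M}(G)^*|_S$ for some $S\subseteq E$ and $D$ a strongly connected (totally cyclic) orientation of $\mathcal{N}$. The same computation as in Step 1, applied to the restriction $M^*|_S$, shows that $\Lambda(\mathcal{M}(G)^*|_S)$ is the set of integer cuts of $G$ supported inside $S$, i.e.\ the cuts determined by integer potentials $x\colon V\to\Z$ that are constant on each connected component $W_1,\dots,W_t$ of $(V,E\setminus S)$. Consequently $D$ is totally cyclic exactly when, for every edge $e\in S$, there is such a potential whose cut is non-negative on every $D$-oriented edge and strictly positive on $e$; this holds for all $e\in S$ precisely when $D$, regarded as an orientation of the contracted graph $\bar G$ with vertex set $\{W_1,\dots,W_t\}$ and edge set $S$, is acyclic. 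In particular each $e\in S$ then lies in a bond of $G$ contained in $S$, which forces $S$ to be exactly the set of edges of $G$ joining distinct $W_j$'s; that is, $S=E(H)$ for the cut subgraph $H$ given by the partition $\{W_1,\dots,W_t\}$, and conversely every cut subgraph arises in this way. Finally, refining an acyclic orientation of $\bar G$ by a topological ordering of its vertices and pulling back to $G$ exhibits $D$ as a coherent acyclic orientation of $H$ — witnessed by the resulting ordered partition $V=V_1\sqcup\cdots\sqcup V_s$ — and every coherent acyclic orientation of $H$ arises so. This defines a bijection $(\mathcal{N},D)\mapsto(H,D)$ from $\mathcal{SC}(\mathcal{M}(G)^*)$ onto $\mathcal{CAC}$.

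\emph{Step 3 (conclusion and main obstacle).} In each of $\mathcal{SC}(\mathcal{M}(G)^*)$ and $\mathcal{CAC}$ the order relation reads ``larger underlying edge set, with the orientation of the smaller subgraph induced by restriction'', and under the above bijection these conditions coincide verbatim, so the bijection is an isomorphism of posets; combined with Step 1 this gives $\mathcal{FP}^c\cong\mathcal{CAC}$. The gradings match as well: the degree of $(\mathcal{N},D)$ is $\genus(\mathcal{M}(G)^*)-\genus(\mathcal{M}(G)^*|_{E(H)})=\rk_{\mathcal{M}(G)}(E(G)\setminus E(H))$. The content of the argument lies in Step 2: the two points that must be pinned down carefully are (i) that the restrictions of $\mathcal{M}(G)^*$ admitting totally cyclic orientations are precisely the cut subgraphs, which rests on the duality identity $\mathcal{M}(G)^*|_S=\mathcal{M}(G/(E\setminus S))^*$ and the elementary description of unions of bonds; and (ii) the equivalence ``totally cyclic orientation of a cographic matroid $\Leftrightarrow$ acyclic orientation of the graph'' — the graphical shadow of the acyclic/totally-cyclic duality for oriented matroids \cite[Section 3.4]{BLSWZ} — together with the observation that every acyclic orientation of $\bar G$ is realised by an ordered vertex partition of $G$ and hence is ``coherent''.
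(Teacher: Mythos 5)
Your proof is correct, and the overall strategy — deriving the cut theorem from Theorem~\ref{main} by identifying $\Gamma(G)$ with $\Lambda(\mathcal{M}(G)^\vee)$ and then translating $\mathcal{SC}(\mathcal{M}(G)^\vee)$ into $\mathcal{CAC}$ — is precisely the route the paper takes. The difference lies in how the ``dictionary'' of your Step 2 is executed. The paper first states a matroid-level notion of coherent acyclic orientation (positive cut through every element of $\mathcal{N}$), so that the matroid theorem (their Theorem in \S5.2) is a one-paragraph unwinding of $\Lambda(\mathcal{M}^\vee)=\Gamma(\mathcal{M})$, and then separately proves two propositions (and a supporting lemma about incidence-matrix row coefficients $a_w$) to show that this matroid-level notion agrees with the graph-theoretic ``cut subgraph with ordered-partition-coherent orientation.'' You instead go directly, using the identity $\mathcal{M}(G)^\vee|_S = (\mathcal{M}(G)/(E\setminus S))^\vee$ to recognise $\Lambda(\mathcal{M}(G)^\vee|_S)$ as the cuts of the contraction $\bar G$, the loop argument to see that $S$ must be exactly the set of edges joining distinct components of $(V,E\setminus S)$, and Minty's arc-colouring duality (``every edge lies in a directed cut $\Leftrightarrow$ no edge lies in a directed cycle'') to equate total cyclicity of the cographic orientation with acyclicity of $\bar G$, whence a topological ordering supplies the ordered partition. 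Your version avoids the potential/row-coefficient computations of the paper's Propositions and Lemma by outsourcing the hard part to standard oriented-matroid facts; the paper's version is more self-contained and also sets up the regular-matroid generalisation of $\mathcal{CAC}$ directly, which you don't need since you only prove the graph case. Both are valid; I would only flag that your claim ``refining an acyclic orientation of $\bar G$ by a topological ordering $\ldots$ and every coherent acyclic orientation of $H$ arises so'' implicitly uses the fact (which is true, but worth a sentence) that if $H$ is a cut subgraph for \emph{some} ordered partition, then it is also one for the finer partition into connected components of $G\setminus H$, with an induced ordering.
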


To generalise Theorem ~\ref{thm:cuts} to regular matroids, we first recall the definition of the lattice of integer cuts for a regular matroid -- see for example \cite[Section 2.3]{Su-Wagner} for more detail -- and define coherent acyclic orientations in this context.

\begin{definition}
Given a regular matroid $\mathcal{M}$ represented by a TU matrix $M$, the lattice of integer cuts of $\mathcal M$, denoted $\Gamma(\mathcal M)$, is defined as
$\Gamma(\mathcal{M}) =\Row(M) \cap \mathbb{Z}^E$, with the standard inner product inherited from $\R^E$.
\end{definition}

Note that the isometry type of $\Gamma(\mathcal M)$ does not depend on the choice of $M$, and in the case of a graphical matroid this definition clearly specialises to the lattice of integer flows of the graph.
The definition of coherent acyclic orientations is somewhat more subtle:

\begin{definition}
Let $\mathcal{N}$ be a submatroid of a regular matroid $\calM$. Let $E(\mathcal{N})$ be the ground set of $\mathcal{N}$. An orientation $D$ of $\mathcal{N}$ is coherent acyclic if for every $e \in E(\mathcal{N})$, there exists $z_e \in (\mathbb{Z}_{\geq 0})^{E(\mathcal{N})}$ such that $e + z_e \in \Gamma(\calM)$, where the columns of $M$ corresponding to $E(\mathcal{N})$ are oriented according to $D$.
\end{definition}

We prove that this definition, when specialised to graphical matroids, is equivalent to that of \cite{Amini}.
\begin{proposition}
Let $H$ be a cut subgraph of a directed graph $G$. Let $\mathcal{M}=\mathcal{M}(G)$ be the corresponding oriented graphical matroid, which is regular and represented by the signed incidence matrix $M$. Let $\mathcal{N}$ be the corresponding submatroid with orientation induced by $G$. If this is a coherent acyclic orientation for $H$, then so is the orientation of $\mathcal N$ induced from $M$.
\end{proposition}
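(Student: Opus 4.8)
The plan is to prove the implication by exhibiting, for every edge of $H$, an explicit integral cut vector of $\mathcal M$ serving as the required witness. First I would record the concrete shape of the cut lattice of a graphical matroid: with $M$ the signed incidence matrix of the directed graph $G$, the column of $e=(u,v)$ is $\mathbf 1_v-\mathbf 1_u$, so a typical element of $\Row(M)$ is the potential difference $\delta f$ of a function $f\colon V(G)\to\R$, namely $(\delta f)_e=f(v)-f(u)$ for $e=(u,v)$; in particular $\Gamma(\mathcal M)=\Row(M)\cap\Z^E$ contains $\delta f$ for every $f\colon V(G)\to\Z$. I would also observe at the outset that the orientation of $\mathcal N$ ``induced from $M$'' is, by construction, exactly the orientation $\mathcal N$ inherits from the directed graph $G$ (the sign pattern of the column $\mathbf 1_v-\mathbf 1_u$ records the direction $u\to v$), so no genuine reorientation is involved; the content of the proposition is that the combinatorial (ordered-partition) coherent-acyclicity of this orientation forces its lattice-theoretic coherent-acyclicity.

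Next I would unpack the hypothesis. Let $V(G)=V_1\sqcup\cdots\sqcup V_s$ be the ordered partition witnessing that $H$ is a cut subgraph, so $E(H)$ is precisely the set of edges joining two distinct parts, and coherent-acyclicity of the $G$-induced orientation says that every $e=(u,v)\in E(H)$ has $u\in V_i$, $v\in V_j$ with $i<j$. Now fix an arbitrary $e_0=(u_0,v_0)\in E(H)$, say $u_0\in V_{i_0}$ and $v_0\in V_{j_0}$ with $i_0<j_0$, and define the step potential $f\colon V(G)\to\Z$ by $f(w)=0$ if $w\in V_k$ with $k\le i_0$ and $f(w)=1$ otherwise; set $z_{e_0}:=\delta f-e_0$. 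The verification has three short parts: (i) $\supp(\delta f)\subseteq E(H)=E(\mathcal N)$, since $(\delta f)_e\neq 0$ forces the two endpoints of $e$ into different parts; (ii) every coordinate of $\delta f$ lies in $\{0,1\}$, because for $e=(u,v)\in E(H)$ with $u\in V_a$, $v\in V_b$, $a<b$, one has $(\delta f)_e=f(v)-f(u)\in\{0,1\}$; and (iii) $(\delta f)_{e_0}=f(v_0)-f(u_0)=1-0=1$. Hence $z_{e_0}\in(\Z_{\ge 0})^{E(\mathcal N)}$ and $e_0+z_{e_0}=\delta f\in\Gamma(\mathcal M)$, which is precisely the condition in the definition of a coherent acyclic orientation; as $e_0$ was arbitrary, the induced orientation of $\mathcal N$ is coherent acyclic in the matroid sense.

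The one delicate point — and the step I would flag as the main obstacle — is securing coefficient \emph{exactly} $1$ on $e_0$ rather than merely a positive coefficient. The obvious potential $f(w)=k$ for $w\in V_k$ gives a cut $\delta f$ supported on $E(H)$ with all coordinates positive, but its coordinate on $e_0$ is $j_0-i_0$, which may exceed $1$; using the two-valued threshold potential instead is exactly what pins the $e_0$-coordinate to $1$ while keeping all other coordinates in $\{0,1\}$ and the support inside $E(H)$. Everything else is routine bookkeeping. (The reverse implication, which the proposition as stated does not claim, would require separately extracting an ordered vertex partition from a lattice-theoretic coherent acyclic orientation.)
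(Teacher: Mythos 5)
Your proof is correct and follows essentially the same approach as the paper: the two-valued threshold potential $f=\mathbf 1_{U_2}$, with $U_2=\bigcup_{k>i_0}V_k$, produces exactly the cut vector that the paper writes as $r_2=\sum_{v\in U_2} r_v$, a sum of rows of the signed incidence matrix, and the verification steps (i)--(iii) mirror the paper's computation of $\inner{r_2,e}$. One small remark: the issue you flag as ``the main obstacle'' --- forcing the $e_0$-coordinate of the cut to equal exactly $1$ --- is not actually an obstacle, since the definition of coherent acyclicity only asks for $z_{e_0}\in(\Z_{\geq 0})^{E(\mathcal N)}$ with $e_0+z_{e_0}\in\Gamma(\mathcal M)$; thus the ramp potential $f(w)=k$ for $w\in V_k$ also works, as $z_{e_0}:=\delta f-e_0$ then has $(z_{e_0})_{e_0}=j_0-i_0-1\geq 0$ and all other coordinates nonnegative and supported on $E(H)$. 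The threshold potential is the choice the paper makes, and it is the cleaner one, but the coefficient-exactly-$1$ requirement is self-imposed rather than forced by the definition.
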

\begin{proof}
Fix an edge $e=(u,v)\in E$. Since $H$ is a cut subgraph, there exists an ordered partition $V = V_1 \sqcup \cdots \sqcup V_s$ such that
$u \in V_i$ and $v \in V_j$, with $i \leq j$. 
To prove the statement, we need to exhibit $z_{e}\in (\mathbb{Z}_{\geq 0})^{E(\mathcal{N})}$ such that $e+z_{e}\in \Gamma(M)$. 

Define
$U_1 = \bigcup_{k \leq i} V_{k}$ and  $U_2 = \bigcup_{k > i} V_k$. Then, $V(G) = U_1 \sqcup U_2$ is an ordered partition such that $e$ is a cut edge. In fact, all edges of $G$ which run between $U_1$ and $U_2$ are edges of $H$, and hence are all oriented from $U_1$ to $U_2$.

Recall that the row vector $r_v$ of $M$ corresponding to a vertex $v \in V(G)$ is a vector in $\mathbb{R}^E$ such that, for any $f\in E$,
$$\inner{r_v, f} =  \begin{cases}
\phantom{-}0 & \text{if $f$ is a loop, or $f$ is not incident to $v$}\\
-1 & \text{if $f$ begins at $v$ and not a loop,}\\
+1 & \text{if $f$ ends at $v$ and not a loop.}\\
\end{cases}$$
Consider $r_2 := \sum_{v \in U_2} r_v$, and note that $r_2 \in \Gamma(G)$ by definition. Note that $\inner{r_2,f}=0$ whenever $f$ is an edge within $U_2$ or $U_1$. Since $e$ is an edge between $U_1$ and $U_2$, it is oriented towards $U_2$ and $\inner{r_2,e}=1$. Therefore, $z_{e}:=r_2-e$ has the required properties. 
\end{proof}

For the reverse implication, we make use of the following Lemma: 

\begin{lemma}\label{lem:Coefficients} Let $G$ be a directed graph, $\mathcal M(G)$ the graphical matroid represented by the signed incidence matrix $M$, and $\mathcal N$ a submatroid such that the orientation of $\mathcal N$ inherited from $M$ is coherent acyclic. 

Let $z_{e} \in (\mathbb{Z}_{\geq 0})^{E(\mathcal{N})}$ such that $c_{e}=e+z_{e} \in \Gamma(\mathcal M)$, and write $c_{e_0} = \sum_{w \in V} a_w r_w$, where $r_w$ denotes the row of $M$ corresponding to $w \in V(G)$, and $a_w\in \Z$. Then, the coefficients $a_w$ have the following properties:
\begin{enumerate}
\item For $e=(u,v) \not \in \mathcal{N}$, $a_u = a_v$.
\item For $e=(u,v) \in \mathcal{N}$, $a_u < a_v$.
\end{enumerate}
\end{lemma}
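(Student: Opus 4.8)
The plan is to deduce both parts from one identity relating the coefficients $a_w$ to pairings of $c_{e_0}$ with edges: I will read (1) off the support of $c_{e_0}$ and (2) off the signs and sizes of its coordinates. The only genuine point is that (2) forces $c_{e_0}$ to be taken with support \emph{all} of $E(\mathcal N)$, which is where the hypothesis of coherent acyclicity is used decisively.

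First I would record the identity. Since $c_{e_0}\in\Gamma(\mathcal M)\subseteq\Row(M)$ we may write $c_{e_0}=\sum_{w\in V}a_w r_w$ with $a_w\in\Z$; these are unique up to adding a constant on each connected component of $G$, and since the endpoints of any edge lie in a common component, each difference $a_v-a_u$ below is well defined. For a non-loop edge $f=(u,v)$ the defining formula for $M$ gives $\inner{r_w,f}$ equal to $-1$, $+1$, or $0$ according as $w=u$, $w=v$, or $w\notin\{u,v\}$, so $\inner{c_{e_0},f}=\sum_w a_w\inner{r_w,f}=a_v-a_u$. If $f$ is a loop the $f$-column of $M$ is zero, so both sides vanish; moreover a loop cannot lie in $E(\mathcal N)$, since a certificate $f+z_f$ would have nonzero $f$-coordinate whereas every vector of $\Gamma(\mathcal M)\subseteq\Row(M)$ has zero $f$-coordinate. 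Hence in (1) and (2) we may take $f$ non-loop.

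Part (1) is now immediate: $e_0\in E(\mathcal N)$ and $z_{e_0}$ is supported on $E(\mathcal N)$, so $c_{e_0}=e_0+z_{e_0}$ is supported on $E(\mathcal N)$, and therefore $f=(u,v)\notin E(\mathcal N)$ forces $a_v-a_u=\inner{c_{e_0},f}=(c_{e_0})_f=0$. For part (2), non-negativity of $c_{e_0}=e_0+z_{e_0}$ gives $a_v-a_u=(c_{e_0})_f\ge 0$ for $f\in E(\mathcal N)$, i.e.\ $a_u\le a_v$; and the strict inequality for \emph{every} $f\in E(\mathcal N)$ amounts to $(c_{e_0})_f\ge 1$ for all such $f$, that is, to $\supp(c_{e_0})=E(\mathcal N)$. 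Such a certificate is produced from coherent acyclicity: applying its definition to each $g\in E(\mathcal N)$ gives $w_g\in(\Z_{\ge 0})^{E(\mathcal N)}$ with $g+w_g\in\Gamma(\mathcal M)$, and (identifying an edge with its standard basis vector, as elsewhere in the paper) the element $c_{e_0}:=\sum_{g\in E(\mathcal N)}(g+w_g)$ lies in $\Gamma(\mathcal M)$, is non-negative, equals $e_0+z_{e_0}$ for $z_{e_0}:=\sum_{g\ne e_0}g+\sum_g w_g\in(\Z_{\ge 0})^{E(\mathcal N)}$, and has $(c_{e_0})_f=1+\sum_g(w_g)_f\ge 1$ for every $f\in E(\mathcal N)$. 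With this $c_{e_0}$ the identity yields $a_u<a_v$ for all $f\in E(\mathcal N)$, while part (1) is unchanged since $c_{e_0}$ is still supported on $E(\mathcal N)$.

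The main obstacle is precisely this weak-to-strict step in (2): a bare certificate $c_{e_0}=e_0+z_{e_0}$ is guaranteed only to satisfy $(c_{e_0})_{e_0}\ge 1$ and may vanish on the other edges of $\mathcal N$, giving only $a_u\le a_v$ there. The resolution is that coherent acyclicity furnishes a certificate for every edge of $\mathcal N$, and their sum is a single element of $\Gamma(\mathcal M)$ still of the form $e_0+z_{e_0}$ but now with full support $E(\mathcal N)$.
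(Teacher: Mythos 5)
Your proof is correct, and its core computation---pairing the cut element with a non-loop edge $f=(u,v)$ via the rows of $M$ to get $\inner{c_{e_0},f}=a_v-a_u$, then reading part (1) off the support and part (2) off the signs---is exactly the paper's. Where you genuinely diverge is in part (2). The paper's proof handles each edge $e\in E(\mathcal N)$ with its \emph{own} certificate $c_e=e+z_e$, whose $e$-coordinate is at least $1$, so it obtains $a_u<a_v$ edge by edge but with coefficients $a_w$ that implicitly depend on $e$; read against the statement's single decomposition $c_{e_0}=\sum_{w}a_w r_w$, this is precisely the weak-to-strict issue you flagged: an arbitrary certificate for $e_0$ may vanish on the other edges of $\mathcal N$ and then yields only $a_u\le a_v$ there, while the subsequent proposition does use a single coefficient set to order the components of $G\setminus H$. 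Your remedy---summing certificates over all edges of $\mathcal N$ to get one element of $\Gamma(\mathcal M)$, still of the form $e_0+z_{e_0}$, supported on $E(\mathcal N)$ with every coordinate there at least $1$---makes both conclusions hold simultaneously for one set of coefficients, which is the form in which the lemma is actually applied. So your route is a slight strengthening and repair of the written argument rather than a departure from it; the side remarks that loops cannot lie in $E(\mathcal N)$ and that the $a_w$ are determined only up to a constant on each connected component (so that the differences $a_v-a_u$ are well defined) are also correct points the paper leaves implicit.
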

\begin{proof}
By definition of the signed incidence matrix, we have
$$\inner{c_e, e} = a_v\inner{r_v, e}  + a_u \inner{r_u, e} = a_v - a_u$$
For the first claim, since $e \not \in \mathcal{N}$, we have $\inner{c_e, e} = 0$, and thus, $a_v = a_u$.
For the second claim, since $e \in \mathcal{N}$, we have $\inner{c_e, e} > 0$ since $c_e=e+z_e$ and $z_e$ is a non-negative edge vector. Hence, $a_v > a_u$.
\end{proof}

\begin{proposition}
Let $H$ be a subgraph of a directed graph $G$. Let $\mathcal{M}=\mathcal{M}(G)$ be the graphical matroid represented by the signed incidence matrix $M$. Let $\mathcal{N}$ be the submatroid corresponding to $H$. 

If the orientation of $\mathcal{N}$ induced by $\mathcal{M}(G)$ is coherent acyclic, then $H$ is a cut subgraph of $G$ and its orientation induced by $G$ is coherent acyclic.
\end{proposition}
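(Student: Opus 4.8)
The plan is to use Lemma~\ref{lem:Coefficients} to construct the required ordered vertex partition directly from the integer coefficients appearing in a representation of the cut edges as combinations of rows of $M$. First I would fix, for each $e \in E(\mathcal{N})$, an element $z_e \in (\mathbb{Z}_{\geq 0})^{E(\mathcal{N})}$ with $c_e := e + z_e \in \Gamma(\mathcal{M})$, as guaranteed by the hypothesis that the orientation of $\mathcal{N}$ is coherent acyclic. The problem is that each $c_e$ only controls the coefficient differences $a_w$ near the edge $e$; to get a single partition I need one globally consistent vertex labelling. So I would instead work with a single well-chosen element of $\Gamma(\mathcal{M})$: take $c := \sum_{e \in E(\mathcal{N})} c_e$, which lies in $\Gamma(\mathcal{M}) = \Row(M) \cap \mathbb{Z}^E$, and write $c = \sum_{w \in V} a_w r_w$ with $a_w \in \mathbb{Z}$. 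Because each $c_e$ is supported on $E(\mathcal{N})$ and is non-negative except at $e$ where it equals $1$, the combined vector $c$ has $\supp(c) \subseteq E(\mathcal{N})$ and $\inner{c, e} > 0$ for every $e \in E(\mathcal{N})$, while $\inner{c, f} = 0$ for every $f \notin E(\mathcal{N})$.

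Next I would translate these support conditions into statements about the $a_w$. As in Lemma~\ref{lem:Coefficients}, for any directed edge $f = (u,v)$ of $G$ we have $\inner{c, f} = a_v - a_u$. Hence $f \notin E(\mathcal{N})$ forces $a_u = a_v$, and $f = (u,v) \in E(\mathcal{N})$ forces $a_u < a_v$. Now define the ordered partition of $V(G)$ by grouping vertices according to the value of $a_w$: let the distinct values taken be $t_1 < t_2 < \cdots < t_s$, and set $V_k := \{ w \in V(G) : a_w = t_k \}$. This is an ordered partition $V(G) = V_1 \sqcup \cdots \sqcup V_s$. The two bullet points above say precisely that an edge of $G$ lies in $H$ if and only if its endpoints lie in different parts $V_i, V_j$ — the ``only if'' from the first bullet (edges not in $\mathcal{N}$ stay within a part) and the ``if'' from the second (edges in $\mathcal{N}$ go between distinct parts) — which is exactly the definition of a cut subgraph. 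Moreover the second bullet, read as $a_u < a_v$, says the induced orientation sends each $e = (u,v) \in H$ from the lower-indexed part to the higher-indexed part, i.e. $e \in V_i$, $e \in V_j$ with $i < j$; this is the definition of a coherent acyclic orientation.

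The main obstacle I anticipate is the bookkeeping needed to guarantee that summing the $c_e$ does not cause cancellations that would shrink the support: one must check that $\inner{c, e} > 0$ for \emph{every} $e \in E(\mathcal{N})$ and not merely for ``most'' of them. This is where non-negativity of the $z_e$ is essential — each $c_e$ contributes a $+1$ at its own edge and only non-negative amounts elsewhere, so no entry of $c$ over $E(\mathcal{N})$ can become zero or negative. A secondary point to handle carefully is that $\mathcal{N}$ need not be a connected subgraph, so the values $a_w$ are only determined up to an additive constant on each connected component of the complement of $H$; but since we only use the relations ``equal'' and ``strictly less'' between adjacent vertices, and the hypothesis is exactly that these relations are mutually consistent (no directed cycle among the $\mathcal{N}$-edges, forced automatically by the strict inequalities $a_u < a_v$), the partition by level sets of $a_w$ is well-defined regardless. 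I would close by noting that this completes the equivalence of the two definitions of coherent acyclic orientation for graphical matroids, and therefore that Theorem~\ref{thm:cuts} is a special case of its matroid generalisation.
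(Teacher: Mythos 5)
Your proposal is correct, and it takes the same overall approach as the paper — pass from the cut elements $c_e = e + z_e$ to the coefficients $a_w$ in $c = \sum_w a_w r_w$, then partition vertices by the values $a_w$ — but with one genuinely different and important move: you replace the single $c_e$ by the sum $c = \sum_{e \in E(\mathcal N)} c_e$. This is not a cosmetic change. The paper's proof fixes some $c_{e_0}$, invokes Lemma~\ref{lem:Coefficients}, and then asserts that $a_u < a_v$ for \emph{every} edge $(u,v) \in H$; but the proof of Lemma~\ref{lem:Coefficients}(2) only yields strict inequality at edges $f$ where $\inner{c_{e_0}, f} > 0$, and there is no guarantee that a given $z_{e_0}$ has positive entries at every edge of $\mathcal N$. (Concretely, on the path $a\!-\!b\!-\!c\!-\!d$ with $H = \{(a,b),(c,d)\}$ oriented left-to-right, the legitimate choice $c_{(a,b)} = (1,0,0)$ gives $a_a = 0$, $a_b = a_c = a_d = 1$, so the edge $(c,d)$ lands inside a single part.) Your summed element $c$ has $\inner{c,e} \geq 1$ for every $e \in E(\mathcal N)$ and $\inner{c,f} = 0$ for $f \notin E(\mathcal N)$, which is exactly what makes the partition by level sets of $a_w$ produce a valid ordered partition witnessing both that $H$ is a cut subgraph and that the orientation is coherent acyclic. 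The remaining difference — you partition by level sets of $a_w$ rather than by connected components of $G \setminus H$ as the paper does — is harmless: with the summed $c$, each component of $G\setminus H$ lies in a single level set, and merging components with equal $a$-value does not affect the cut-subgraph condition. The one small slip in your write-up is the remark that $a_w$ is determined up to an additive constant on each component of the complement of $H$; in fact it is determined up to a constant on each connected component of $G$, but since only differences across edges are used this does not affect the argument. In short, your proof is correct and tightens the paper's argument at precisely the point where it is vague.
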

\begin{proof}
Let $V_1, V_2, \cdots, V_s$ be connected components of $G \setminus H$. 
Assume that the orientation of $\mathcal{N}$ induced by $\mathcal{M}(G)$ is coherent acyclic. That is, for any $e \in H$, there exists $z_{e}\in (\mathbb{Z}_{\geq 0})^{E(\mathcal{N})}$ with $c_{e}=e+z_{e}\in \Gamma(\mathcal{M})$. For $w\in V(G)$, let the coefficients $a_w$ be defined as in Lemma~\ref{lem:Coefficients}.

By Lemma~\ref{lem:Coefficients}, if $u, v \in V_i$, then $a_u = a_v$. Thus we define $a_i := a_v$ for some (hence all) $v \in V_i$, for $i = 1, 2, \cdots, s$. Permute the partition if necessary, to ensure that $a_1 \leq a_2 \leq \cdots \leq a_s$, and in fact, by part (2) of Lemma~\ref{lem:Coefficients}, we know that $a_1 < a_2 < \cdots < a_s$.

First, we prove that $H$ is a cut subgraph with respect to the partition $V(G)=V_1\sqcup V_2 \sqcup ... \sqcup V_s$. By definition of the partition, for any edge $e=(u,v)$ with $e\in G\setminus H$, the two endpoints of $e$ are in the same partition set: $V_i$ for some $1\leq i\leq s$.
For an edge $e=(u,v)$ with $e \in H$, by Lemma~\ref{lem:Coefficients},  $a_u < a_v$, hence, $u$ and $v$ cannot be in the same partition set.

Finally, since $a_1<a_2<\cdots<a_s$, the orientation on $H$ induced from $G$ is coherent acyclic with respect to this partition.
\end{proof}

Using these definitions, the posets $\mathcal{FP}^c$ and $\mathcal{CAC}$ are defined for regular matroids just like they are for graphs.

\subsection{Duality for regular matroids and Amini's theorem for cuts}
We begin with a brief recall of the definition of matroid duality for regular matroids.
Let $\calM$ be a regular matroid represented by a full rank TU matrix $M$, and let $B \subseteq E$ be a basis set of $\calM$, that is, a maximal independent set. Then there is a signed permutation matrix bringing the labels in $B$ to the first $r$ positions, and a matrix $F$, invertible over $\mathbb Z$, such that $FMP=[I_r |  L]$\footnote{For an $r\times s$ matrix $A$ and $r\times t$ matrix $B$, the notation $[A |  B]$ stands for the $r\times (s+t)$ matrix obtained by writing $A$ and $B$ side by side.}, for some $r\times s$ matrix $L$ and the identity matrix $I_r$. This is called a {\em representation of $\calM$ coordinatised by B}, and $[I_r  | L]$ is TU. For more detail see \cite[Section 2.2]{Su-Wagner} and \cite[Chapter 2.2]{Oxley}.

The dual matroid $\calM^\vee$ of $\calM$ is the regular matroid represented by the TU matrix $M^\vee=[-L^T |  I_s],$ where the superscript $T$ denotes the matrix transpose. Note that there is a canonical identification $E(M)=E(M^\vee)$, and the basis sets of $\calM^\vee$ are the complements of the basis sets of $\calM$. In fact, duality is sometimes defined by this property. 

Matroid duality generalises planar graph duality: if $G$ is a planar graph and $G^\vee$ its planar dual, then the corresponding graphical matroids are matroid duals: $\mathcal{M}(G^\vee)=(\mathcal{M}(G))^\vee$. Furthermore, graphical matroids whose matroid dual is also graphical are precisely those obtained from planar graphs.

Observe that $\Gamma(\mathcal{M^\vee}) = \Row(M^\vee) = \ker(M) = \Lambda(\calM)$ canonically, and vice versa, $\Lambda(\mathcal{M}^\vee)=\Gamma(\mathcal{M})$.

The following theorem generalises Amini's theorem for cuts to regular matroids:

\begin{theorem}
For a regular matroid $\mathcal{M}$, the posets $\mathcal{FP}^c(\mathcal{M})$ and $\mathcal{CAC}(\mathcal{M})$ are isomorphic.
\end{theorem}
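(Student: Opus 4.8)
The plan is to deduce this theorem directly from Theorem~\ref{main} by applying it to the dual matroid $\mathcal{M}^\vee$. The key observation, already recorded in the excerpt, is the canonical identification $\Lambda(\mathcal{M}^\vee) = \Gamma(\mathcal{M})$ as integer lattices inside $\Z^E$ (with $E = E(\mathcal{M}) = E(\mathcal{M}^\vee)$), which is moreover an isometry since both inherit the Euclidean inner product on $\Z^E$. Consequently the Voronoi cell at the origin of $\Gamma(\mathcal{M})$ coincides with the Voronoi cell at the origin of $\Lambda(\mathcal{M}^\vee)$, and therefore their face posets agree: $\mathcal{FP}^c(\mathcal{M}) = \mathcal{FP}(\mathcal{M}^\vee)$. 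Applying Theorem~\ref{main} to $\mathcal{M}^\vee$ then gives $\mathcal{FP}^c(\mathcal{M}) \cong \mathcal{SC}(\mathcal{M}^\vee)$ as graded posets.

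It then remains to identify $\mathcal{SC}(\mathcal{M}^\vee)$ with $\mathcal{CAC}(\mathcal{M})$. This is where the definitions must be matched carefully. A submatroid $\mathcal{N}$ of $\mathcal{M}^\vee$ on base set $S \subseteq E$ corresponds to a submatroid of $\mathcal{M}$ on the same set $S$ (both are just ``subset of $E$ with induced structure''; I should note that the matroid structure is irrelevant here since an element of either poset only records the edge subset and an orientation). An orientation $D$ of $\mathcal{N}$ amounts to a choice of signs on the columns indexed by $S$. By definition, $(\mathcal{N}, D)$ is strongly connected as an oriented (sub)matroid of $\mathcal{M}^\vee$ exactly when for every $e \in S$ there is $z_e \in (\Z_{\geq 0})^S$ with $e + z_e \in \Lambda(\mathcal{N}') = \ker$ of the corresponding submatrix of $M^\vee$ — but using $\Lambda(\mathcal{M}^\vee) = \Gamma(\mathcal{M})$ and the fact that the submatrix-kernel condition for $M^\vee$ translates to the submatrix-row-span condition for $M$, this is precisely the condition that $D$ is a coherent acyclic orientation of $\mathcal{N}$ as a submatroid of $\mathcal{M}$, by the definition given in the excerpt. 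Thus the underlying sets of $\mathcal{SC}(\mathcal{M}^\vee)$ and $\mathcal{CAC}(\mathcal{M})$ coincide.

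Next I would check that the partial orders agree: in both posets, $(\mathcal{N}, D_\mathcal{N}) \leq (\mathcal{N}', D_{\mathcal{N}'})$ iff $\mathcal{N}'$ has the smaller edge set and $D_{\mathcal{N}'}$ is the restriction of $D_{\mathcal{N}}$ (same signs on the smaller set). These are literally the same condition, so the order-isomorphism is the identity on edge-subsets-with-orientations. Finally, the grading: $\mathcal{SC}(\mathcal{M}^\vee)$ is graded by $\genus(\mathcal{M}^\vee) - \genus(\mathcal{N})$ where $\genus(\mathcal{N}) = \dim \ker(\text{submatrix of }M^\vee)$, and via duality this equals the corank of the corresponding submatrix of $M$, i.e.\ $|S| - \dim(\text{row span})$; a short rank computation shows this matches the grading inherited from $\mathcal{FP}^c(\mathcal{M})$ under the chain of identifications, so the composite is an isomorphism of graded posets. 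The one genuine point requiring care — and the main obstacle — is verifying that ``strongly connected orientation of a submatroid of $\mathcal{M}^\vee$'' unwinds, through the duality $\ker(M^\vee) = \Row(M)$ restricted to a coordinate subspace, to exactly ``coherent acyclic orientation of a submatroid of $\mathcal{M}$''; this rests on the observation that restricting $\mathcal{M}^\vee$ to $S$ and restricting the flow lattice to coordinates in $S$ is compatible with the duality only after being slightly careful about which submatrix one takes (columns of $M^\vee$ indexed by $S$ versus the cut lattice of the restriction of $\mathcal{M}$), but both the earlier Propositions relating the two notions of coherent acyclic orientation and the direct definitions in the excerpt already handle this, so the argument reduces to assembling these pieces.
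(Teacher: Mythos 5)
Your proposal is correct and follows essentially the same route as the paper: identify $\Gamma(\mathcal{M}) = \Lambda(\mathcal{M}^\vee)$ canonically (hence $\mathcal{FP}^c(\mathcal{M}) = \mathcal{FP}(\mathcal{M}^\vee)$), apply Theorem~\ref{main} to $\mathcal{M}^\vee$, and observe that the defining condition for a coherent acyclic orientation of a submatroid of $\mathcal{M}$ literally coincides, via $\Row(M) = \ker(M^\vee)$ restricted to the coordinate subspace $\R^{E(\mathcal{N})}$, with the strongly connected condition for the corresponding submatroid of $\mathcal{M}^\vee$. You are somewhat more explicit than the paper about checking that the subsets, orders, and gradings match, which is a virtue; the only point you flag as requiring care is indeed a non-issue because, for a submatrix consisting of columns indexed by $S$, the kernel (after zero-padding into $\R^E$) is exactly $\ker(M^\vee) \cap \R^S$, so "flow of the restricted matroid supported on $S$" and "element of $\Lambda(\mathcal{M}^\vee)$ supported on $S$" are identical notions, which is exactly what the paper's definition of coherent acyclic orientation uses.
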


\begin{proof}
This is Theorem~\ref{main} stated for the dual matroid. Namely, since $\Gamma(\mathcal M)\cong \Lambda(\mathcal M^\vee)$, we have $\mathcal{FP}^c(\mathcal M)\cong \mathcal{FP}(\mathcal{M}^\vee)$. 
Now assume that $\mathcal{N}$ is a submatroid of $\mathcal M$ with a coherent acyclic orientation. That is, for any $e\in E(\mathcal N)$ there exists a non-negative edge vector  $z_e$ with $e+z_e\in \Gamma(\mathcal M)=\Lambda(\mathcal M^\vee)$. In other words, any $e \in E(\mathcal N^\vee)$ is part of a flow,  so $\mathcal N^\vee$ is a submatroid of $\mathcal M^\vee$ with a strongly connected orientation. Thus, $$\mathcal{CAC}(\mathcal M)\cong \mathcal{SC}(\mathcal M^\vee).$$ 
\end{proof}

\end{document}